\def\@seccntformat#1{\csname the#1\endcsname.\ } 
\def\@biblabel#1{#1.} 
\date{}
\newenvironment{proof}[1][\hspace{-1.0ex}]%
{\par\addvspace{1mm}{\sc Proof\hspace{1.0ex}{#1}.} }%
{$\blacktriangle$\par\addvspace{1mm}}
\newif\ifNoRemark
\def\addtheorem#1#2#3#4{
\ifthenelse{\equal{#2}{}}{}%
{\ifthenelse{\expandafter\isundefined\csname the#2\endcsname}{\newcounter{#2}}{}}
\newenvironment{#1}[1][\global\NoRemarktrue]
{\par\addvspace{2mm plus 0.5mm minus 0.2mm}\noindent 
{\bf #3}\ifthenelse{\equal{#2}{}}{}%
{\refstepcounter{#2}{\bf ~\csname the#2\endcsname}}%
{\bf \vphantom{##1}\ifNoRemark.\ \else\ (##1).\fi}\begingroup #4}%
{\endgroup\par\addvspace{1mm plus 0.5mm minus 0.2mm}\global\NoRemarkfalse}
\expandafter\newcommand\csname b#1\endcsname{\begin{#1}}
\expandafter\newcommand\csname e#1\endcsname{\end{#1}}
}
\title{On the number of autotopies of an $n$-ary qusigroup \\of order $4$%
\thanks{The results were reported in part at the International conference  
``Mal'tsev Meeting''  (Novosibirsk, 12--15 November 2013)}
\thanks{The work was supported 
by the RFBR grants 10-01-00616, 13-01-00463,
and 
by the program of fundamental scientific researches of the SB RAS No I.1.1., projects 0314-2016-0001, 0314-2016-0016.
}
}
\author{Еvgeny~V.~Gorkunov, Denis~S.~Krotov, Vladimir~N.~Potapov
\thanks{The authors are from the Sobolev Institute of Mathematics, pr. Akademika Koptyuga 4, Novosibirsk 630090, Russia; and from the Novosibirsk State University, Pirogova 2, Novosibirsk 630090, Russia. E-mail: evgumin@gmail.com,
krotov@math.nsc.ru,
vpotapov@math.nsc.ru.
}
}
\begin{document}
\maketitle
\begin{abstract}
An algebraic system from a finite set $\Sigma$ of cardinality $k$ and an $n$-ary operation $f$ invertible in each argument is called an $n$-ary quasigroup of order $k$. An autotopy of an $n$-ary quasigroup $(\Sigma,f)$ is a collection $(\theta_0,\theta_1,\ldots,\theta_n)$ of $n+1$ permutations of $\Sigma$ such that $f(\theta_1(x_1),\ldots,\theta_n(x_n))\equiv \theta_0(f(x_1,\ldots,x_n))$. We show that every $n$-ary quasigroup of order $4$ has at least $2^{[n/2]+2}$ and not more than $6\cdot 4^n$ autotopies. We characterize the $n$-ary quasigroups of order $4$ with $2^{(n+3)/2}$, $2\cdot 4^n$, and $6\cdot 4^n$ autotopies.
\end{abstract}

\section{Introduction}\label{s:intro}

We denote by $ \Sigma $ the set of $ k $ elements  $ 0 $, $ 1 $, \ldots, $ k-1 $.
The Cartesian degree  $ \Sigma ^ n $
consists of all tuples of length $ n $ formed by elements of $ \Sigma $.
An algebraic system with support $ \Sigma $ and an $ n $-ary operation $ f \colon \Sigma ^ n \to \Sigma $
 invertible in each argument is called an \emph {$ n $ -ary quasigroup of order $ k $} (sometimes, for brevity,
an $ n $-quasigroup or simply a quasigroup).
The corresponding operation $ f $ is also called a quasigroup.

\emph {Isotopy} of the set $ \Sigma ^ {n + 1} $ is called
set $ \theta = (\theta_0, \theta_1, \ldots, \theta_n) $ permutations from the symmetric group $ S_k $
acting on $ \Sigma $.
The isotopy action on $ \Sigma ^ {n + 1} $
is given by the rule
$$ \theta \colon {x} \mapsto \theta ({x}) = (\theta_0 (x_0), \ldots, \theta_n (x_n)) \quad \mbox {for } x = (x_0, \ldots, x_n) \in \Sigma ^ {n + 1}. $$
To denote the isotopies and the permutations that constitute them, we will use the Greek alphabet,
and when writing their action on the elements of $ \Sigma $ we sometimes omit parentheses.

Two sets $ M_1, M_2 \subseteq \Sigma ^ {n + 1} $ are called \emph {isotopic} if
here exists an isotopy $ \theta $ such that $ \theta (M_1) = M_2 $. Two quasigroups $ f $ and $ g $
are called \emph{isotopic} if
for some isotopy $ \theta = (\theta_0, \theta_1, \ldots, \theta_n) $ it holds
\begin{equation} \label {eq:q-iso} g (x_1, \ldots, x_n) = \theta_0 ^ {- 1} f (\theta_1 x_1, \ldots, \theta_n x_n). \end{equation}
If $ f = g $, then any isotopy $ \theta $ for which (\ref {eq:q-iso}) holds is called an \emph {autotopy} of the quasigroup $ f $.
The \textit {autotopy group} $ \mathrm {Atp} (f) $ of a quasigroup $ f $ is the group
consisting of all autotopies of  $ f $ (the group operation is the composition).

A $ 2 $-quasigroup $ f $ with \emph{neutral} element $ e $ such that 
$ f (e, {a}) = f (a, {e}) = a $  for any $ a \in \Sigma $ is called a \emph{loop}. If a loop $ f $
satisfies the associative axiom $ f (x, f (y, z)) \equiv f (f (x, y), z) $, then we have a group.
It is known (see, for example, \cite{Belousov}),
that all $ 2 $-quasigroups of order $ 4 $ are isotopic to either the group $ Z_2 \times {Z_2} $ or the group $ Z_4 $.
So, quasigroups generalize groups, which illustrates
their algebraic nature.

At the same time, the concept of a quasigroup admits a purely combinatorial interpretation. By \textit {line}
in $ \Sigma ^ {n + 1} $, we mean a subset of $ n $ elements that are distinct exactly in one coordinate.
For a quasigroup $ f \colon \Sigma ^ n \to \Sigma $, the set
$ M (f) = \{(x_0, x_1, \ldots, x_n) \in \Sigma ^ {n + 1} \mid x_0 = f ({x}) \} $ will be called the \textit {code} of the
quasigroup $ f $. The term ``code'' is borrowed from the theory of error correcting codes,
in the framework of which the set $ M (f) $ is an MDS-code with distance $ 2 $
(an equivalent concept, also well known in combinatorics, is the Latin hypercube).
The quasigroup code is characterized as a subset $ \Sigma ^ {n + 1} $ of cardinality $ k ^ n $ intersecting
each line in exactly one element. This view allows us to see a quasigroup
from its combinatorial side.
We note that the codes of isotopic quasigroups are isotopic,
namely, it follows from (\ref{eq:q-iso}) that $ M (g) = \theta ^ {-1} (M (f)) $.

In this paper, we investigate autotopies of quasigroups of order $ 4 $. 
We establish  tight upper and lower
bounds on the order of the group of autotopies of such a quasigroup. 
In a way, it is natural 
that the richest group of autotopies turned out to be for the quasigroups called linear  with a structure close to group.
Also we characterize the quasigroups with minimum and pre-maximum 
(that is, text to the maximum)
 orders of the autotopy group.

The concept of an autotopy is a generalization of 
a more partial notion of ``automorphism'' and reflects in some
 sense the ``regularity'' or ``symmetry'' of a quasigroup as a combinatorial object.
The study of the transformations of the
space mapping the object on itself is a classic, 
but at the same time a difficult task,
considered in many areas of mathematics. 
The complexity of such problems is illustrated by Frucht's  theorem
\cite{Frucht:38} stating each finite group 
is isomorphic to the group of automorphisms of some graph, 
and also a similar result 
concerning perfect codes in coding theory \cite{Phelps:86}.

In coding theory, 
the group of automorphisms of a code is generated by the isometries of the metric
 space that sitabilize the code. There we find another example of the phenomenon 
that an object with group properties has the richest group of automorphisms.
In papers \cite{Mal:2000:aut.en,SolTop2000pit,SolTop2000en} it is shown that the binary Hamming code, a linear
perfect code, has the largest group of automorphisms among the binary perfect $1$-codes,
and its order at least twice exceeds the order of the automorphism group of any other binary $1$-perfect code of the same length.

The paper is organized as follows. Section \ref {s:def} provides basic definitions and statements.
In Section \ref {s:repres}, a representation of quasigroups necessary for further proof of the fundamental
results is given. 
Auxiliary statements on the autotopies of quasigroups are collected in Section~\ref{s:au}. 
A tight
lower bound on the number autotopies of a quasigroup of order $ 4 $ is proved in Section \ref{s:lb}. 
In Section \ref{s:min} we discuss the quasigroups 
with the smallest order of the autotopy group.
Finally, in Section~\ref {s:up}, an upper bound on the order of the autotopy group
quasigroup of order $ 4 $ is derived and it is proved that this bound
is attained only on the linear quasigroups.
We also establish the maximum order of the autotopy group of a nonlinear quasigroup of order $ 4 $ and prove
that it is attained only on isotopically transitive quasigroups, which were described in \cite{KP:2016:transitive}.

\section{Notations and basic facts}\label{s:def}

For $ {x} = (x_1, \ldots, x_n) \in \Sigma ^ n $ and $ a \in \Sigma $, we put $ {x} _i ^ a = (x_1, \ldots, x_ {i-1} , a, x_ {i + 1}, \ldots, x_n) $.
The \emph {inverse} of an $ n $-quasigroup $ f $ in the 
$ i $-th argument is denoted by $ f ^ {\langle {i} \rangle} $; 
that is, for any $ {x} \in \Sigma^n $ and $ a \in \Sigma $, the equations
$ f ^ {\langle {i} \rangle} ({x} _i ^ a) = x_i $ and $ f({x}) = a $ are equivalent. Obviously, the inversion of an $ n $-quasigroup in any argument is an
$ n $-quasigroup. By the $0$-th argument of an $ n $-quasigroup $ f $, we mean the value of the function $ f (x_1, \ldots, x_n) $,
which, formally not being an argument of the operation $ f $ itself, is associated with the $ i $-th argument of its inverse $ f^{\langle {i} \rangle} $.

In this paper, we study the autotopies of quasigroups of order $ 4 $; so, below we assume
$ \Sigma = \{0,1,2,3 \} $.
A quasigroup $ f $ of order $ 4 $ is said to be \emph {semilinear} if there are
$ a_j, b_j \in \Sigma $, $ a_j \ne b_j $, $ j = 0,1, \ldots, n $, for which
\begin {equation} \label {eq:semilinear}
f (\{a_1, b_1 \} \times \ldots \times \{a_n, b_n \}) = \{a_0, b_0 \}.
\end {equation}
In this case, we also say that the quasigroup $ f $ is \textit {$ \{a_j, b_j \} $-semilinear
in the $ j $-th argument}, $ j = 0,1, \ldots, n $.
Note that if in the identity (\ref {eq:semilinear}) any two of the sets $ \{a_j, b_j \} $ are replaced by their complements in $ \Sigma $,
then the identity will remain true.
Thus, in every argument, a semilinear quasigroup is $ \{0,1 \} $-, $ \{0,2 \} $- or $ \{0,3 \} $-semilinear.
If $ f $ is $ \{a, b \} $-semilinear in each of its arguments,
then we call it simply \textit {$ \{a, b \} $-semilinear.}

A quasigroup $ f $ is \textit {linear} if
in each of its arguments it is $ \{a, b \} $-semilinear for any $ a, b \in \Sigma $.

Each $ 2 $-quasigroup is isotopic to one of the two quasigroups $ \oplus $, $ +_4 $ with the value tables
\begin{equation} \label{eq:Z22,Z4}
\begin{array}{c|cccc}
    \oplus & 0 & 2 & 1 & 3\\
    \hline
    0 & 0 & 2 & 1 & 3\\
    2 & 2 & 0 & 3 & 1\\
    1 & 1 & 3 & 0 & 2\\
    3 & 3 & 1 & 2 & 0
\end{array}, \qquad
\begin{array}{c|cccc}
    +_4 & 0 & 2 & 1 & 3\\
    \hline
    0 & 0 & 2 & 1 & 3\\
    2 & 2 & 0 & 3 & 1\\
    1 & 1 & 3 & 2 & 0\\
    3 & 3 & 1 & 0 & 2
\end{array}.
\end{equation}
The quasigroups $ (\Sigma, \oplus) $ and $ (\Sigma, + _ 4) $ are the groups $ \mathbb {Z} _2 \times \mathbb {Z} _2 $ and $ \mathbb {Z} _4 $, respectively. 
\begin {note} In the value tables (\ref {eq:Z22,Z4}), the elements $ 0 $, $ 2 $, $ 1 $, $ 3 $ are not ordered lexicographically, in the usual sense. 
With the given ordering, it is easier to observe the semilinear structure of the given group.
In the future, similarly, the table of values of a $ \{0,2 \} $-semilinear $ n $"=quasigroup is convenient to be thought as an $ n $-dimensional array $ 4 \times \ldots \times 4 $
which is divided into $ n $-dimensional subarrays 
$ 2 \times \ldots \times 2 $ filled with two values $ 0 $, $ 2 $ or $ 1 $, $ 3 $.
\end {note}
The quasigroup $ \oplus $ iterated $ n-1 $ times will be denoted by $ l_n $; that is,
$$ l_n(x_1,\ldots,x_n) = x_1\oplus\ldots\oplus x_n. $$

\begin{lemma}[\cite{PotKro:asymp}]\label{l:linear}
(i) All linear $ n $-quasigroups are isotopic to $ l_n $.
(ii) If the $ n $-quasigroup is simultaneously $ \{0,1 \} $- and $ \{0,2 \} $"=semilinear in some argument, then it is linear.
\end{lemma}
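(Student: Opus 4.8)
The plan is to recast semilinearity in the language of $\mathbb{Z}_2$-valued characters and to prove part (ii) first, deducing (i) from it. Identifying $(\Sigma,\oplus)$ with $\mathbb{Z}_2^2$, introduce the three nonzero homomorphisms $\alpha,\beta,\gamma\colon(\Sigma,\oplus)\to\mathbb{Z}_2$, where $\alpha$ is constant on each of $\{0,2\},\{1,3\}$, $\beta$ on each of $\{0,1\},\{2,3\}$, and $\gamma$ on each of $\{0,3\},\{1,2\}$; thus the three partitions of $\Sigma$ into pairs correspond to $\alpha,\beta,\gamma$, and $\gamma=\alpha+\beta$. The feature I will exploit is that $\{0,\alpha,\beta,\gamma\}$, the dual group of $\mathbb{Z}_2^2$, is closed under addition.

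The first and central step is to show that $f$ being $\{0,2\}$-semilinear in argument $1$ is equivalent to a single affine identity
$$\phi_0(f(x_1,\dots,x_n))=c+\sum_{i=1}^n\phi_i(x_i),$$
in which every $\phi_i$ lies in $\{\alpha,\beta,\gamma\}$ and $\phi_1=\alpha$. Starting from the one box identity (\ref{eq:semilinear}) witnessing $\{0,2\}$-semilinearity in argument $1$, I attach to each coordinate $i$ the character $\phi_i$ of the partition $\{\{a_i,b_i\},\Sigma\setminus\{a_i,b_i\}\}$ (so $\phi_1=\alpha$, and each $\phi_i$ is nonzero since $a_i\neq b_i$). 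Iterating the flip-two remark shows that complementing an even number of the blocks keeps the identity true; the reachable block patterns form the even-weight coset in $\mathbb{Z}_2^{n+1}$, which projects bijectively onto the $2^n$ input patterns. Hence every product box $\prod_i\phi_i^{-1}(\epsilon_i)$ is sent by $f$ into the single output block $\phi_0^{-1}(c+\sum_i\epsilon_i)$, which is exactly the displayed identity. Symmetrically, $\{0,1\}$-semilinearity in argument $1$ yields $\psi_0(f(x))=c'+\sum_i\psi_i(x_i)$ with $\psi_i\in\{\alpha,\beta,\gamma\}$ and $\psi_1=\beta$.

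Next I combine the two identities. Adding them over $\mathbb{Z}_2$ gives $(\phi_0+\psi_0)(f(x))=(c+c')+\sum_i(\phi_i+\psi_i)(x_i)$, and by closure of the dual group each $\phi_i+\psi_i\in\{0,\alpha,\beta,\gamma\}$. At argument $1$ the coefficient is $\phi_1+\psi_1=\alpha+\beta=\gamma\neq0$, so the right-hand side genuinely depends on $x_1$ and is nonconstant; therefore the left-hand side is not identically $0$, i.e.\ $\phi_0+\psi_0\neq0$. This is a genuine third affine identity, witnessing $\{0,3\}$-semilinearity in argument $1$, and in particular $\phi_0,\psi_0$ are two distinct nonzero characters. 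To pass from argument $1$ to all arguments I use invertibility: if $\phi_i=\psi_i$ for some $i\geq1$, then fixing the other variables and letting $x_i$ range makes $(\phi_0+\psi_0)(f(x))$ constant, contradicting invertibility of $f$ in argument $i$ (as $\phi_0+\psi_0$ is nonzero, hence nonconstant). Thus $\phi_i\neq\psi_i$ at every coordinate, so $\{\phi_i,\psi_i,\phi_i+\psi_i\}=\{\alpha,\beta,\gamma\}$ everywhere, and the three affine identities exhibit $\{0,1\}$-, $\{0,2\}$-, $\{0,3\}$-semilinearity in every argument; this is linearity, proving (ii). For (i), a linear $f$ in particular satisfies the pure identities $\alpha(f(x))=c_1+\bigoplus_i\alpha(x_i)$ and $\beta(f(x))=c_2+\bigoplus_i\beta(x_i)$; since $(\alpha,\beta)$ identifies $\Sigma$ with $\mathbb{Z}_2^2$, this forces $f(x)=d\oplus x_1\oplus\dots\oplus x_n$ for a constant $d$, and translating the $0$-th coordinate by $d$ is an isotopy of $f$ onto $l_n$.

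The main obstacle is the reduction in the second paragraph: extracting from the single guaranteed box identity a full affine factorization valid on all of $\Sigma^n$, which rests on checking that the flip-two orbit sweeps out every input block pattern exactly once. Once this characterization is secured, the rest is short algebra: closure of the dual group under addition manufactures the third partition at the distinguished argument, and the invertibility argument propagates all three partitions to every coordinate.
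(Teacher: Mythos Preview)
The paper does not prove this lemma; it is quoted from \cite{PotKro:asymp} without argument, so there is no in-paper proof to compare against. Your treatment of (ii) is correct and clean: recasting semilinearity as an affine $\mathbb{Z}_2$-identity is exactly the right viewpoint, the flip-two orbit argument producing the global identity from the single box is sound, and the invertibility step propagating $\phi_i\neq\psi_i$ to every coordinate is valid.

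Your argument for (i), however, has a genuine gap. You assert that a linear $f$ ``in particular satisfies the pure identities'' $\alpha(f(x))=c_1+\sum_i\alpha(x_i)$ and $\beta(f(x))=c_2+\sum_i\beta(x_i)$, but this is false in general. Take $f(x_1,x_2)=x_1\oplus\sigma(x_2)$ with $\sigma=(12)$: this $f$ is isotopic to $l_2$ and hence linear (linearity is preserved under isotopy by Lemma~\ref{l:isemilinear}), yet $f(\{0,2\}\times\{0,2\})=\{0,1,2,3\}$, so no pure $\alpha$-identity can hold. What your own proof of (ii) actually delivers for a linear $f$ is a pair of identities $\phi_0(f(x))=c+\sum_i\phi_i(x_i)$ and $\psi_0(f(x))=c'+\sum_i\psi_i(x_i)$ with $\phi_i\neq\psi_i$ at every coordinate, but the characters $\phi_i$ need not be the same across coordinates. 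The fix is short and stays entirely within your framework: since $(\phi_i,\psi_i)\colon\Sigma\to\mathbb{Z}_2^2$ is a bijection for each $i$, pick permutations $\tau_i$ of $\Sigma$ with $\phi_i\circ\tau_i=\alpha$ and $\psi_i\circ\tau_i=\beta$; then $g(x)=\tau_0^{-1}f(\tau_1x_1,\ldots,\tau_nx_n)$ is isotopic to $f$ and does satisfy the two pure identities, after which your concluding sentence applies verbatim to $g$.
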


\begin{lemma}\label{l:isemilinear}
If an $ n $-quasigroup $ f $ is $ \{a, b \} $"=semilinear with respect to the $ i $-th argument for some
$ i \in \{0, ..., n \} $ and $ \theta = (\theta_0, \ldots, \theta_n) $ is an isotopy, then the $ n $-quasigroup $ \theta ( f) $ is
$ \{\theta_i^{- 1} (a), \theta_i ^ {- 1} (b) \} $-semilinear with respect to the $ i $-th argument.
\end{lemma}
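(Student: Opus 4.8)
The plan is to transport the semilinear decomposition of $f$ through the isotopy $\theta$ by direct substitution into the defining identity \ref{eq:q-iso}. Since $f$ is $\{a,b\}$-semilinear in the $i$-th argument, it is semilinear, so there are two-element sets $\{a_j,b_j\}$, $j=0,1,\ldots,n$, with $\{a_i,b_i\}=\{a,b\}$, satisfying \ref{eq:semilinear}, namely $f(\{a_1,b_1\}\times\cdots\times\{a_n,b_n\})=\{a_0,b_0\}$. Writing $g=\theta(f)$, so that $g(x_1,\ldots,x_n)=\theta_0^{-1}f(\theta_1 x_1,\ldots,\theta_n x_n)$ by \ref{eq:q-iso}, I would propose as the candidate decomposition for $g$ the sets $\{\theta_j^{-1}(a_j),\theta_j^{-1}(b_j)\}$ for $j=0,1,\ldots,n$; each of these is again a genuine two-element set because $\theta_j$ is a permutation and $a_j\ne b_j$.

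First I would check the inclusion. Fix $x_j\in\{\theta_j^{-1}(a_j),\theta_j^{-1}(b_j)\}$ for $j=1,\ldots,n$, so that $\theta_j(x_j)\in\{a_j,b_j\}$. Since the argument tuple $(\theta_1 x_1,\ldots,\theta_n x_n)$ then lies in $\{a_1,b_1\}\times\cdots\times\{a_n,b_n\}$, the value $f(\theta_1 x_1,\ldots,\theta_n x_n)$ belongs to $\{a_0,b_0\}$ by semilinearity of $f$, and applying $\theta_0^{-1}$ yields $g(x_1,\ldots,x_n)\in\{\theta_0^{-1}(a_0),\theta_0^{-1}(b_0)\}$. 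Thus $g$ maps the product $\prod_{j=1}^n\{\theta_j^{-1}(a_j),\theta_j^{-1}(b_j)\}$ into the two-element set $\{\theta_0^{-1}(a_0),\theta_0^{-1}(b_0)\}$.

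Next I would upgrade this inclusion to the equality required by \ref{eq:semilinear}. Fixing $x_2,\ldots,x_n$ and letting $x_1$ range over its two distinct values, invertibility of the quasigroup $g$ in the first argument forces the two resulting values of $g$ to be distinct; as both lie in the two-element set $\{\theta_0^{-1}(a_0),\theta_0^{-1}(b_0)\}$, they exhaust it, so the image equals this set. Hence $g=\theta(f)$ is semilinear with decomposition sets $\{\theta_j^{-1}(a_j),\theta_j^{-1}(b_j)\}$; in particular its $i$-th set is $\{\theta_i^{-1}(a_i),\theta_i^{-1}(b_i)\}=\{\theta_i^{-1}(a),\theta_i^{-1}(b)\}$, which is the asserted conclusion. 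The case $i=0$ is covered identically, the $0$-th ``argument'' being the value coordinate, and one may equivalently phrase the whole argument through the code relation $M(\theta(f))=\theta^{-1}(M(f))$. There is essentially no hard step: the only point demanding care is the passage from inclusion to equality of the image, to confirm that the target two-element set is not collapsed to a singleton, and this is precisely where the quasigroup property---invertibility in each argument---enters.
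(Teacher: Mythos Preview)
Your proof is correct. The paper states this lemma without proof, treating it as immediate from the definitions of semilinearity and isotopy; your explicit verification by transporting the product $\prod_j\{a_j,b_j\}$ through $\theta^{-1}$ and then using invertibility to upgrade inclusion to equality is exactly the routine check the paper is implicitly invoking.
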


A quasigroup $ f $ of arity $ n $ is said to be \emph{reducible} if for some integer
$ m, \, 2 \leqslant m <n $, and the permutation $ \sigma \in {S_n} $ holds, the representation $ f $ in the form of a repetition-free
compositions
\begin{equation}
f(x_1,\ldots,x_n)= h(g(x_{\sigma(1)},\ldots,x_{\sigma(m)}),x_{\sigma(m+1)},\ldots,x_{\sigma(n)})
\end{equation}
(repetition-free means that each variable occurs only once in the right side).
Without loss of generality, we can assume that the quasigroup $ g $ is irreducible.

In~\cite{KroPot:4}, a description of quasigroups of order $ 4 $ is obtained in terms of semilinearity and reducibility.

\begin{theorem}
\label{th:top}
Every $n$-ary quasigroup of order $ 4 $ is reducible or semilinear.
\end{theorem}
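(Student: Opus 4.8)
The plan is to argue by induction on the arity $n$. For the base case $n=2$, every $2$-quasigroup of order $4$ is isotopic to $\oplus$ or $+_4$ by the classical classification quoted in the introduction, and both value tables in (\ref{eq:Z22,Z4}) are visibly $\{0,2\}$-semilinear in every argument: each of the four $2\times 2$ subarrays is filled with values from $\{0,2\}$ or from $\{1,3\}$. Since semilinearity is preserved under isotopy by Lemma~\ref{l:isemilinear}, every $2$-quasigroup of order $4$ is semilinear, while reducibility is vacuous because it requires $2\le m<n$. Thus the statement holds for $n=2$.

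For the inductive step fix $n\ge 3$ and an order-$4$ $n$-quasigroup $f$, and assume the theorem for all smaller arities. The key objects are the \emph{retracts} of $f$, obtained by fixing the values of some arguments. Fixing all but two arguments $i,j$ yields a $2$-quasigroup, which by the base case is semilinear and hence $\{0,1\}$-, $\{0,2\}$-, or $\{0,3\}$-semilinear in its $i$-th argument. For each argument $i\in\{0,1,\dots,n\}$ I would record, as the other coordinates range over $\Sigma$, which of the three partitions $\{0,1\}\,|\,\{2,3\}$, $\{0,2\}\,|\,\{1,3\}$, $\{0,3\}\,|\,\{1,2\}$ of $\Sigma$ is respected by $f$ in argument $i$. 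Semilinearity of $f$ is precisely the assertion that one such partition can be chosen simultaneously for every argument so that $f$ descends to a well-defined map between the sets of pair-classes; the goal is to show that any obstruction to this global consistency forces a reduction.

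The argument then splits into a dichotomy. First, if in some single argument the retracts exhibit two different partition types (say both $\{0,1\}$- and $\{0,2\}$-semilinearity occur there), then Lemma~\ref{l:linear}(ii) makes $f$ linear, hence a fortiori semilinear (and isotopic to $l_n$ by Lemma~\ref{l:linear}(i)). Second, if every argument carries a single well-defined partition type, I would attempt to assemble these into one identity of the form (\ref{eq:semilinear}); when this succeeds, $f$ is semilinear. The remaining possibility is that the per-argument types are individually consistent but cannot be combined into a single global identity. In that case I would invoke the induction hypothesis on the $(n-1)$-ary retracts obtained by fixing one argument: each is reducible or semilinear, and the incompatibility of types should isolate a proper subset $S$ of arguments on which a common pair-structure persists uniformly. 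This subset would serve as the block of an inner irreducible factor $g$, yielding a repetition-free representation $f(x_1,\dots,x_n)=h(g(x_{\sigma(1)},\dots,x_{\sigma(m)}),x_{\sigma(m+1)},\dots,x_{\sigma(n)})$ and hence reducibility.

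The main obstacle is this last step: verifying that an inconsistency among the local semilinear types genuinely yields a valid repetition-free decomposition. Concretely one must show that the candidate inner operation $g$ and outer operation $h$ are themselves quasigroups, invertible in each argument, and that the block $S$ is independent of the fixed values of the outside coordinates, so that the decomposition holds identically rather than merely retract by retract. Controlling how the three partition types interact pairwise across overlapping retracts, and repeatedly invoking Lemma~\ref{l:linear} to collapse the degenerate cases where two types meet in one argument, is where the bulk of the combinatorial casework will lie.
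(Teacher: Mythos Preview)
The paper does not prove Theorem~\ref{th:top}; it is quoted from~\cite{KroPot:4} and used as a black box throughout. So there is no ``paper's own proof'' to compare your attempt against, and you should treat this result as an external input rather than something to be re-derived here.

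That said, your sketch has a genuine gap independent of this. Your first dichotomy branch invokes Lemma~\ref{l:linear}(ii) as follows: ``if in some single argument the retracts exhibit two different partition types \ldots\ then Lemma~\ref{l:linear}(ii) makes $f$ linear.'' But Lemma~\ref{l:linear}(ii) requires the \emph{full} $n$-quasigroup $f$ to be both $\{0,1\}$- and $\{0,2\}$-semilinear in a given argument, in the sense of the global identity~(\ref{eq:semilinear}). Observing that one $2$-ary retract (with some fixed values of the remaining coordinates) is $\{0,1\}$-semilinear in position $i$ while another retract (with different fixed values) is $\{0,2\}$-semilinear there is a much weaker statement and does not put you in the hypotheses of that lemma. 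Indeed, the non-semilinear reducible $3$-quasigroup mentioned in the proof of Lemma~\ref{l:no-z4z4-z2z4} is a case where retracts necessarily fail to share a single global type, yet the quasigroup is certainly not linear. So this branch, as written, does not go through.

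You also candidly flag the other branch---extracting a genuine repetition-free decomposition from local inconsistencies---as the ``main obstacle,'' and you are right that this is where the real work lies. The argument in~\cite{KroPot:4} is substantial and does not reduce to bookkeeping of partition types on $2$-ary retracts; establishing that the candidate inner block is independent of the outside coordinates and that both factors are honest quasigroups requires structural input beyond what you have set up. As a proof plan this is too optimistic; as a citation, Theorem~\ref{th:top} should simply be taken as given.
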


\section{The representation of quasigroups}\label{s:repres}

\hspace*{\parindent} According to Theorem~\ref{th:top}, a non-semilinear quasigroup
can be represented as a repeatition-free composition of two or more semilinear quasigroups (some of the composed quasigroups
can coincide with each other or be linear). A representation of the quasigroup $ f $
in the form of a repeatition-free composition of 
quasigroups of arity greater than $ 1 $ 
will be called a \textit{decomposition} of $ f $. Note that the quasigroup
may have several decompositions. In the simplest case, the quasigroup represents its
own trivial decomposition.

In the following, we will use a graphical representation of the decomposition of a quasigroup in the form of a labeled tree.
The inner vertices of this tree (the degree of which is not less than $ 3 $) will be called \textit {nodes} and denoted by
characters
$ u$, $v$, $ w $, with or without indices;
and the leaves (vertices of degree $ 1 $) are the symbols of the variables $ x_1$, $x_2$, \ldots, $y$, $z $.
The edge incident to a leaf of the tree is called a \textit{leaf} edge. 
The remaining edges are called
\textit{inner}.

Firstly, we define recursively the \textit{root tree} $ T (S) $ of a decomposition $ S $
(the notion of the root decomposition tree is introduced as an auxiliary term
 to define the decomposition tree and will not be used after that definition).
\par
1) A variable $ x_i $ is associated with the tree consisting of one vertex of degree $ 0 $,
being the root and labelled by the variable $ x_i $  itself.
\par
2) 
Let a decomposition of $ S $ be of the form $ S = h (S_1, \ldots, S_n) $. If the decompositions and/or variables $ S_1$, \ldots, $S_n $
correspond to the root trees $ T_1$, \ldots, $T_n $, respectively, then we build the tree $ T (S) $ as follows.
Define a new vertex $ u $ as the root of the tree and assign the label $ h $ to it. 
Consistently connect
vertex $ u $ with the roots of the trees $ T_1$, \ldots, $T_n $. 
The root of the tree
$ T_j $, $ j \in \{1, ..., n \} $, is considered as the $j$-th neighbor of $ u $. 
On the other hand, the vertex $ u $ is considered as the $0$-th neighbor of the root of the tree $ T_j $.

By the \textit{decomposition tree} (without ``root''),
we call the tree obtained by connecting 
the leaf $x_0$ as the $0$-th neighbor to the root of the tree $T(S)$.
The decomposition tree of the decomposition $S$ is denoted by $T_0(S)$.
The leaf $x_0$ corresponds to the $0$-th argument, i.e., to the value
of the quasigroup represented by the decomposition $S$.

The tree $T_0(S)$ of the decomposition $S$ of $f$ can be treated
as the decomposition tree for the code $M(f)$.
It is important to understand that only the enumeration 
of the leaves and of the neighbors of every vertex defines 
which arguments are independent for the quasigroup $f$ 
and for every element of the decomposition.
Changing this enumeration, we can get the decomposition tree
for the inverse of $f$ in any argument.
Namely, to get a decomposition for $f^{<i>}$,
it is sufficient for every inner node of the path from
 $x_0$ 
to $x_i$, swap the labels of the two neighbors, replace the node label to the corresponding inverse, and finally swap the labels $x_i$ and $x_0$.
The order defined on the neighbors 
of every node uniquely determines 
the order of the arguments of the quasigroups in the decomposition and of the represented quasigroup.
We note that as the autotopy groups 
of a quasigroup and its inverses are isomorphic,
from the point of view of the questions considered in the current research, it is not necessary to remember all the time which of the arguments is the $0$-th one;
so, the $0$-th argument will not be emphasized in the most of considerations.

For a decomposition and the corresponding decomposition tree, define the operation of \emph{merging}.
Assume that a decomposition $S$ 
contains the fragment 
\begin{equation}
\label{eq:fragment}
	f_1(S_1,\ldots,S_{i-1},f_2(S_i,\ldots,S_{i+n_2-1}),S_{i+n_2},\ldots,S_{n_1+n_2-1}),
\end{equation}
where $S_1$, \ldots, $S_{n_1+n_2-1}$ are some decompositions and the $n_1$-quasigroup $f_1$ 
and $n_2$-quasigroup $f_2$ 
satisfy the identity 
\begin{equation}
\label{eq:quasifragment}
 g(x_1,\ldots,x_{n_1+n_2-1})\equiv f_1(x_1,\ldots,x_{i-1},f_2(x_i,\ldots,x_{i+n_2-1}),x_{i+n_2},\ldots,x_{n_1+n_2-1})
\end{equation}
for some $(n_1+n_2-1)$-quasigroup $g$.
The result of merging $f_1$ and $f_2$
in $S$ is defined as the decomposition $\widetilde{S}$
obtained from $S$ by replacing the fragment~\eqref{eq:fragment} 
by $g(S_1,\ldots,S_n)$. 
Note that we consider a concrete occurrence  of~\eqref{eq:fragment} in $S$
(in general a fragment can occur more than one time).

Respectively, in the decomposition tree $T_0(S)$, the adjacent nodes $u$ and $v$ 
labeled by $f_1$ and $f_2$ are merged as follows. This pair of nodes is replaced by a new node~$w$ labeled by $g$, whose neighbors are the neighbors of the removed nodes $u$ and $v$ (except  $u$ and $v$ themselves). 
The neighbors of $w$ are assigned the numbers
$0$, \ldots, $n_1+n_2-1$ consequently in the following order. 
At first, the neighbors of $u$ with numbers 
$0$, \ldots, $i-1$ are assigned (in the same order); next,
 the neighbors of  $v$ with the numbers $1$, \ldots, $n_2$ are assigned; then, the remaining neighbors of $u$ with the numbers $i+1$, \ldots, $n_1$ are assigned.
 The result of the described merging is the decomposition tree~$T_0(\widetilde{S})$. Trivially, we have the following fact.

\begin{lemma}\label{l:styag}
Merging does not alter the quasigroup represented by the decomposition.
\end{lemma}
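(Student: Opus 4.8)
The plan is to verify directly that evaluating the decomposition $\widetilde{S}$ on an arbitrary input yields the same element of $\Sigma$ as evaluating $S$, the whole argument resting on the defining identity \eqref{eq:quasifragment}.

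First I would make precise how a decomposition determines a function. Given an assignment of values to the leaf variables $x_1,\ldots,x_n$, the quasigroup represented by $S$ is evaluated by propagating these values up the decomposition tree: at each node one applies the quasigroup labelling that node to the values already computed at its child subtrees, taken in the order fixed by the enumeration of neighbors. Write $\Phi_S$ for the resulting $n$-ary function and $\Phi_{\widetilde{S}}$ for the function obtained from $\widetilde{S}$; the assertion is $\Phi_S\equiv\Phi_{\widetilde{S}}$.

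Next I would localize the comparison. The decompositions $S$ and $\widetilde{S}$ differ only in the fragment \eqref{eq:fragment}, which is replaced by $g(S_1,\ldots,S_{n_1+n_2-1})$. Let $y_1,\ldots,y_{n_1+n_2-1}$ be the values to which the subdecompositions $S_1,\ldots,S_{n_1+n_2-1}$ evaluate under the given input; these values coincide for $S$ and $\widetilde{S}$, since those subtrees are untouched by the merging. Substituting $x_j:=y_j$ into both sides of \eqref{eq:quasifragment} shows at once that the original fragment and its replacement $g$ compute the same element of $\Sigma$.

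Finally, since the value produced at the merged location agrees with the original on every input, propagating it further up the common part of the tree gives the same top value; formally this is a structural induction on the portion of the tree lying above the merged node. Hence $\Phi_S\equiv\Phi_{\widetilde{S}}$, which is the claim. The only point needing care is bookkeeping: one must confirm that the prescribed reordering of the neighbors of the new node~$w$ — first the neighbors $0,\ldots,i-1$ of $u$, then $1,\ldots,n_2$ of $v$, then $i+1,\ldots,n_1$ of $u$ — matches exactly the order of the arguments $x_1,\ldots,x_{n_1+n_2-1}$ on the right-hand side of \eqref{eq:quasifragment}, so that the substitution $x_j:=y_j$ is consistent with the enumeration of arguments. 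By the very construction of the merging this alignment holds, so no genuine obstacle arises, in keeping with the lemma's status as a triviality.
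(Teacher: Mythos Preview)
Your proof is correct and is precisely the unpacking of what the paper intends: the paper gives no proof at all, stating just before the lemma that ``Trivially, we have the following fact,'' and your argument spells out this triviality via the defining identity~\eqref{eq:quasifragment}. There is nothing to compare beyond level of detail.
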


We call a decomposition (and its tree) \textit{semilinear} if all involved quasigroups are semilinear.

In a decomposition tree, consider two neighbor nodes $u$, $v$ with labels $f_1$, $f_2$, respectively. Assume that $u$ is the $0$-th neighbor of $v$
and $v$ is the $i$-th neighbor $u$. We call the nodes $u$ and $v$
\emph{coherent} if for some $a,b\in\Sigma$ the quasigroup $f_1$ 
is $\{a,b\}$-semilinear in the $i$-th argument and $f_2$ 
is $\{a,b\}$-semilinear in the $0$-th argument.

\begin{lemma}\label{l:soglas}
Merging two coherent nodes in a semilinear tree results
in a semilinear tree.
\end{lemma}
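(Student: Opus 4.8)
The plan is to produce an explicit semilinear structure for the new quasigroup $g$ by gluing the semilinear structures of $f_1$ and $f_2$ along the shared pair $\{a,b\}$. By Lemma~\ref{l:styag} the merge leaves every other label of the tree untouched, and those labels were semilinear by hypothesis; so it suffices to show that the single new label $g$ from \eqref{eq:quasifragment} is itself semilinear.

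First I would record the data guaranteed by coherence. Because $f_1$ is $\{a,b\}$-semilinear in its $i$-th argument, there are pairs $\{a_j,b_j\}$, $j=0,\ldots,n_1$, with $\{a_i,b_i\}=\{a,b\}$ and $f_1\bigl(\{a_1,b_1\}\times\cdots\times\{a_{n_1},b_{n_1}\}\bigr)=\{a_0,b_0\}$. Because $f_2$ is $\{a,b\}$-semilinear in its $0$-th argument, there are pairs $\{c_k,d_k\}$, $k=0,\ldots,n_2$, with $\{c_0,d_0\}=\{a,b\}$ and $f_2\bigl(\{c_1,d_1\}\times\cdots\times\{c_{n_2},d_{n_2}\}\bigr)=\{a,b\}$.

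Next I would assemble a candidate structure for $g$ respecting the argument numbering fixed by the merge: to the slots inherited from $f_1$ (its arguments $1,\ldots,i-1$ and $i+1,\ldots,n_1$, together with its value $0$) assign the corresponding pairs $\{a_j,b_j\}$, and to the slots inherited from $f_2$ (its arguments $1,\ldots,n_2$) assign the pairs $\{c_k,d_k\}$. Substituting the product of these pairs into \eqref{eq:quasifragment}, the inner evaluation of $f_2$ lands in $\{a,b\}$; since $\{a,b\}=\{a_i,b_i\}$, the $i$-th slot of $f_1$ then receives a value in its own pair while every other slot of $f_1$ receives its own pair, so by the semilinearity of $f_1$ the output lies in $\{a_0,b_0\}$. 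Thus $g$ maps the chosen product box into the two-element set $\{a_0,b_0\}$.

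The only point requiring care is to rule out that this image collapses to a single element. Here I would use that $g$, being the $(n_1+n_2-1)$-quasigroup of \eqref{eq:quasifragment}, is invertible in each argument: holding all but one coordinate fixed and letting the remaining one run over its two-element pair already yields two distinct outputs. Hence the image has at least two elements and therefore equals $\{a_0,b_0\}$ with $a_0\neq b_0$, which is precisely the defining identity \eqref{eq:semilinear} of semilinearity for $g$. The genuinely essential step is the matching of the two structures at the internal edge, and this is exactly what the coherence hypothesis is tailored to supply; everything else is bookkeeping over the argument numbering.
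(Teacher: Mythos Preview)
Your argument is correct and follows exactly the line the paper indicates: the paper's proof merely observes that it suffices to check the new label $g$ from \eqref{eq:quasifragment} is semilinear and declares this ``straightforward from \eqref{eq:quasifragment} and the definition of a semilinear quasigroup,'' whereas you actually carry out that straightforward verification by gluing the boxes along the shared pair $\{a,b\}$. The extra care you take in ruling out that the image collapses to a singleton (via invertibility of $g$ in each argument) is a detail the paper omits but is harmless and correct.
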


\begin{proof} Let the quasigroups $ f_1 $ and $ f_2 $ of arity $ n_1 $ and $ n_2 $ correspond to coherent
nodes in the decomposition tree, and (\ref{eq:quasifragment}) holds 
for some  $ (n_1 + n_2-1 )$-ary quasigroup $ g $.
To prove the lemma, it suffices to verify that the quasigroup $ g $ 
is semilinear, 
which is straightforward from (\ref{eq:quasifragment}) and
the definition of a semilinear quasigroup.
\end{proof}

We call a semilinear decomposition (and its tree) 
 \emph{proper} if there are no pairs of coherent nodes in the decomposition tree.

\begin{lemma}\label{l:prav}
Every quasigroup of order $4$ has a proper decomposition.
\end{lemma}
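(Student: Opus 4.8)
The plan is to prove the statement in two stages: first produce \emph{some} semilinear decomposition of the given quasigroup, and then repeatedly merge coherent pairs of nodes until none remain, the end result being by definition a proper decomposition. The two lemmas immediately preceding the statement are tailored to make the second stage work: Lemma~\ref{l:soglas} guarantees that merging coherent nodes keeps the tree semilinear, and Lemma~\ref{l:styag} guarantees that the represented quasigroup is unchanged. So the whole argument is essentially a termination argument layered on top of the structure theorem.

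For the first stage I would argue by induction on the arity $n$, using Theorem~\ref{th:top}. A quasigroup of order $4$ is either semilinear or reducible. If it is semilinear, its trivial (single-node) decomposition is already semilinear. If it is reducible, write it as a repetition-free composition $f(x_1,\ldots,x_n)=h(g(x_{\sigma(1)},\ldots,x_{\sigma(m)}),x_{\sigma(m+1)},\ldots,x_{\sigma(n)})$ with $2\leqslant m<n$. Both $g$ (arity $m$) and $h$ (arity $n-m+1$) have arity strictly between $2$ and $n-1$, so by the induction hypothesis each admits a semilinear decomposition; substituting these into the composition yields a semilinear decomposition of $f$. The base case $n=2$ is covered because a $2$-quasigroup cannot be reducible (there is no $m$ with $2\leqslant m<2$), hence is semilinear by Theorem~\ref{th:top}. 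Thus every quasigroup of order $4$ has at least one semilinear decomposition.

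For the second stage, take a semilinear decomposition $S$ of $f$. If its tree $T_0(S)$ contains a coherent pair of neighboring nodes, merge them: by Lemma~\ref{l:styag} the quasigroup represented is still $f$, and by Lemma~\ref{l:soglas} the resulting decomposition is again semilinear, so we may iterate. The key observation is that each merging operation replaces two adjacent nodes by a single node and therefore strictly decreases the number of (inner) nodes of the tree. Since this number is a nonnegative integer, the process must stop after finitely many steps. When it stops, the current semilinear decomposition has no coherent pair of nodes, which is exactly the definition of a proper decomposition of $f$.

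The only point requiring care — and the one I would treat as the main obstacle — is the correctness of the termination monovariant together with the guarantee that ``not proper'' always yields something mergeable. The latter is immediate from the definitions, since a semilinear decomposition fails to be proper precisely when it contains a coherent pair, and such a pair can always be merged via the construction preceding Lemma~\ref{l:styag}. Once this is noted, the strictly decreasing node count closes the argument, and no further case analysis on the shape of the tree is needed.
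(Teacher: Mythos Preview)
Your proposal is correct and follows essentially the same route as the paper: obtain a semilinear decomposition from Theorem~\ref{th:top}, then repeatedly merge coherent pairs (invoking Lemmas~\ref{l:styag} and~\ref{l:soglas}) until none remain, with termination guaranteed because each merge strictly decreases the node count. If anything, you are more careful than the paper, which simply asserts that Theorem~\ref{th:top} yields a semilinear decomposition without spelling out the induction, and bounds the number of merges by $n-2$ via the observation that the tree has at most $n-1$ nodes.
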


\begin{proof} By Theorem~\ref{th:top}, every $ n $-ary quasigroup of order $ 4 $ has a semilinear decomposition.
Since there are no more than $ n-1 $ nodes in the decomposition tree, successively merging pairs of coherent nodes,
we obtain a required decomposition in at most $ n-2 $ steps, 
\end{proof}

\begin{note}
In general, a proper decomposition is not unique and depends on the order of merging.
The simplest example of the decomposition that can be merged in two ways is $f(g(h(x_1,x_2),x_3),x_4)$, where $f$ and $g$ are $\{0,1\}$-semilinear quasigroups, $g$ and $h$  are $\{0,2\}$-semilinear quasigroups, and $f$, $h$ are not linear, in contrast to $g$.
A proper decomposition of a nonlinear quasigroup does not involve 
linear quasigroups, because a node labeled by a linear quasigroup is coherent with each if its neighbors.
\end{note}

Let  $S$ be some  decomposition with the tree $T_0(S)$, 
with the set of edges $E$. 
An \emph{isotopy} of the decomposition is a collection $\theta=(\theta_e)_{e\in{E}}$ of permutations of~$\Sigma$,
acting on $S$ as follows. 
If a node of~$T_0(S)$ has a label~$f_i$ 
and $e_j$, $j=0,1,\ldots,n_i$, is the $j$-th edge incident to this node,
then $f_i$ is replaced by $f'_i$, where $f'_i$ is the quasigroup
defined by
\begin{equation}\label{eq:isot_i}
f'_i(x_1,\ldots,x_{n_i}) = \theta_{e_0}^{-1}f_i(\theta_{e_1} x_1,\ldots,\theta_{e_{n_i}} x_{n_i}).
\end{equation} 
As a result, we get the tree of some decomposition denoted $\theta(S)$
and called isotopic to $S$. The following is straightforward.

\begin{lemma}\label{l:isotop}
Isotopic decompositions represent isotopic quasigroups. 
More precisely, if $\theta$ is an isotopy connecting decompositions
of quasigroups $f$ and $f'$, then
$$x_0=f'(x_1,\ldots,x_{n}) = \theta_{e_0}^{-1}f(\theta_{e_1}(x_1),\ldots,\theta_{e_{n}}(x_{n})),$$
where $e_j$ is the edge incident to the leaf $x_j$, $j=0,1,\ldots,n$.
\end{lemma}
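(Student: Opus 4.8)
The plan is to take the decomposition tree apart edge by edge and observe that every permutation sitting on an \emph{inner} edge is immediately cancelled by its own inverse, so that only the permutations attached to the leaf edges survive; these surviving factors are exactly what turns $f$ into the claimed $f'=\theta_{e_0}^{-1}f(\theta_{e_1}\cdot,\ldots,\theta_{e_n}\cdot)$. I would argue by induction on the number of nodes of $T_0(S)$, combining the recursive definition of the quasigroup computed by a decomposition tree with the local transformation rule~\eqref{eq:isot_i}. The convenient induction hypothesis is stated for an arbitrary rooted subtree $T$ with distinguished output edge $e_{\mathrm{out}}$ and leaves $x_{j_1},\ldots,x_{j_m}$ sitting on leaf edges $e_{j_1},\ldots,e_{j_m}$: the isotopy-modified subtree computes $g'=\theta_{e_{\mathrm{out}}}^{-1}\,g(\theta_{e_{j_1}}x_{j_1},\ldots,\theta_{e_{j_m}}x_{j_m})$, where $g$ is the original function of $T$.

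The base case is a subtree consisting of a single node $u$ with label $g$ of arity $m$; then $e_{\mathrm{out}}$ is the $0$-th edge of $u$, the other incident edges are the leaf edges, and~\eqref{eq:isot_i} gives the formula of the induction hypothesis verbatim. Applied to a single-node \emph{whole} tree (where $e_{\mathrm{out}}$ is the leaf edge $e_0$ of $x_0$), this is already the assertion of the lemma.

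For the inductive step let $u$ be the root node, incident to $x_0$ through $e_0$, with label $h$ of arity $n$; let $e_1',\ldots,e_n'$ be the remaining edges at $u$ and $T_1,\ldots,T_n$ the subtrees hanging from them, so that $f=h(g_1,\ldots,g_n)$ with $g_j$ the function of $T_j$. Rule~\eqref{eq:isot_i} at $u$ replaces $h$ by $h'=\theta_{e_0}^{-1}h(\theta_{e_1'}\,\cdot\,,\ldots,\theta_{e_n'}\,\cdot\,)$, while the induction hypothesis (whose output edge for $T_j$ is exactly $e_j'$) makes the modified $T_j$ compute $g_j'=\theta_{e_j'}^{-1}g_j(\ldots)$ with leaf-edge permutations on its arguments. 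Substituting $g_j'$ into the $j$-th slot of $h'$, the factor $\theta_{e_j'}$ supplied by $h'$ meets the factor $\theta_{e_j'}^{-1}$ produced by $g_j'$ and the two cancel by $\theta_{e_j'}\theta_{e_j'}^{-1}=\Id$; iterating over $j$ leaves only $\theta_{e_0}^{-1}$ on the output and the leaf-edge permutations on the inputs, which is the claim. A subtree that is a single leaf $x_j$ is treated without the induction hypothesis: it passes $x_j$ unchanged, and the factor $\theta_{e_j}=\theta_{e_j'}$ introduced by $h'$ yields the term $\theta_{e_j}x_j$.

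I expect no genuine mathematical difficulty here; the only delicate point is the bookkeeping. Each inner edge $e$ is the $0$-th (output) edge for one endpoint and the $i$-th (input) edge for the other, so one must track this orientation to be sure it is precisely $\theta_e$ that is met by $\theta_e^{-1}$, and one must keep the roles of inner edges and leaf edges apart so that exactly the leaf-edge permutations remain at the end. Formalising the induction hypothesis so that it correctly carries the output-edge permutation, and distinguishing leaf children of a node from genuine subtree children, is the part that needs care; beyond that the whole content is the trivial cancellation on every inner edge, which is why the statement is essentially immediate.
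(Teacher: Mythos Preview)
Your argument is correct: the induction on the number of nodes, with the cancellation $\theta_{e}\theta_{e}^{-1}=\Id$ on every inner edge, is exactly the routine verification that makes the statement hold, and your bookkeeping of output versus input roles on each edge is the only place requiring attention. The paper itself gives no proof beyond declaring the lemma ``straightforward,'' so your write-up is simply the natural unpacking of that remark.
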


An \emph{autotopy} of the decomposition $S$ is an isotopy $\theta$ 
such that $\theta(S)=S$. 
The \emph{support}
of an  autotopy is the set of edges corresponding to non-identity permutations.
We call a proper decomposition (and its tree) \emph{reduced}
if every involved quasigroup is
$\{0,1\}$- or $\{0,2\}$-semilinear.

\begin{lemma}\label{l:dobr}
For every quasigroup of order $4$,
there is an isotopic quasigroup with a reduced decomposition.
\end{lemma}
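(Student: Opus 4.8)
The plan is to start from a proper semilinear decomposition $S$ of $f$, which exists by Lemma~\ref{l:prav}, and to manufacture the required reduced decomposition by applying a \emph{single} carefully chosen isotopy of $S$; the transformed decomposition $\theta(S)$ will then represent a quasigroup isotopic to $f$ by Lemma~\ref{l:isotop}. The linear case is disposed of immediately: if $f$ is linear then by Lemma~\ref{l:linear}(i) it is isotopic to $l_n$, whose one-node decomposition is $\{0,1\}$-semilinear, hence reduced. So I may assume $f$ is nonlinear; then, since a linear node would be coherent with each of its neighbours and thus violate properness, no node of $S$ is linear, and every node carries in each of its arguments a well-defined \emph{type}, namely one of the three partitions $P_1=\{\{0,1\},\{2,3\}\}$, $P_2=\{\{0,2\},\{1,3\}\}$, $P_3=\{\{0,3\},\{1,2\}\}$ of $\Sigma$ into pairs, according as it is $\{0,1\}$-, $\{0,2\}$-, or $\{0,3\}$-semilinear there.

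The engine of the argument is the natural action of $S_4$ on these three types, which induces the surjection $S_4\to S_3$: by Lemma~\ref{l:isemilinear}, an edge permutation $\theta_e$ relabels the type of \emph{every} argument incident to $e$ by one and the same element $\overline{\theta_e^{-1}}\in S_3$. Consequently, along each inner edge the property that its two incident arguments carry equal or distinct types is invariant under isotopy. Because $S$ is proper, the two arguments meeting at an inner edge carry \emph{distinct} types; and because a reduced decomposition requires every node to be monochromatic, with all its argument-types equal and equal to $P_1$ or $P_2$, the two endpoints of an inner edge are forced to receive \emph{different} target colours.

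This converts the task into a graph-colouring problem. Contracting the leaf edges, the nodes and inner edges form a tree, which is bipartite, so I can fix a proper $2$-colouring $\tau$ of the nodes with values in $\{P_1,P_2\}$. I would then solve for the edge permutations directly: for an inner edge $e$ joining the $i$-th argument of $u$ (type $t_i(u)$) to the $0$-th argument of $v$ (type $t_0(v)$), both $(t_i(u),t_0(v))$ and $(\tau(u),\tau(v))$ are ordered pairs of distinct types, so there is a unique $\sigma\in S_3$ with $\sigma(t_i(u))=\tau(u)$ and $\sigma(t_0(v))=\tau(v)$, and I pick any $\theta_e\in S_4$ with $\overline{\theta_e^{-1}}=\sigma$. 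For a leaf edge at a node $u$ with incident type $t$, I simply choose $\theta_e$ with $\overline{\theta_e^{-1}}(t)=\tau(u)$, which is possible since $S_3$ is transitive. By construction every argument of every node $u$ acquires the type $\tau(u)$, so each quasigroup in $\theta(S)$ becomes $\{0,1\}$- or $\{0,2\}$-semilinear; and since the colours differ across every inner edge, no coherent pair survives, so $\theta(S)$ remains proper. Hence $\theta(S)$ is reduced, as required. The degenerate single-node case is handled in passing by colouring the sole node $P_1$ and exploiting the free choice on its leaf edges.

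The one genuinely conceptual step, and the place I expect to need care, is the second paragraph: realizing that a single edge permutation acts on both sides of an edge simultaneously, so that properness translates \emph{exactly} into the demand that adjacent nodes be coloured differently. Once this is recognized, feasibility is automatic because trees are bipartite, and the remainder is routine bookkeeping in the $S_4\to S_3$ action — in particular the fact that $S_3$ acts simply transitively on ordered pairs of distinct types, which is what makes each inner-edge permutation well defined.
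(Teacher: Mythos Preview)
Your proof is correct and follows essentially the same route as the paper's: start from a proper decomposition, exploit bipartiteness of the tree to $2$-colour the nodes with target types $\{0,1\}$ and $\{0,2\}$, then choose an edge permutation on each edge to realise those types simultaneously on both sides (possible precisely because properness forces distinct types across each inner edge). The paper writes down the permutations explicitly ($\theta_e(0)=0$, $\theta_e(1)=a$, $\theta_e(2)=b$ on inner edges) where you invoke the simply transitive action of $S_3$ on ordered pairs of distinct partitions, and you treat the linear case separately while the paper leaves it implicit; these are cosmetic differences, not substantive ones.
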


\begin{proof} 
Consider an $ n $-quasigroup $ f $ 
and construct an isotopic quasigroup with a reduced
decomposition. We start with a proper decomposition $ S $ of $ f $, 
which exists by Lemma~\ref{l:prav}.
Since the decomposition tree $ T_0 (S) $ is a bipartite graph,
its vertices are divided into two independent parts; 
the vertices of one part are called even, the other are odd.

Let us find an isotopy $ \theta $ 
such that the odd nodes of $ \theta (S) $ are $ \{0,1 \} $-semilinear,
while the even nodes are $ \{0,2 \} $-semilinear. 
To do this, 
we define the permutation~$ \theta_e $ for every edge $ e $
in the tree $ T_0 (S) $.

Consider two cases. Firstly, let $ e $ connect two nodes, 
an odd one with a label~$ g $ and an even one labeled by $ h $. 
Suppose that $ g $ is the $ i $-th neighbor of $ h $, 
which in turn is the $0$-th neighbor of $ g $.
Note that if  $ g $  is $ \{0, a \} $-semilinear 
in the $0$-th argument
and $ h $ is $ \{0, b \} $-semilinear in the $ i $-th, 
then $ a \ne {b} $, because the decomposition
$ S $ is proper. 
In this case, we set 
$ \theta_e (0) = 0 $, 
$ \theta_e (1) = a $, 
$ \theta_e (2) = b $,
$ \theta_e (3) \in \{1,2,3 \} \setminus \{a, b \} $.

Now we turn to the other case 
when $e$ connects a node labeled by $ g $ 
and its $i$-th neighbor, a variable $ x $.
Suppose $ g $ is $ \{0, a \} $-semilinear in the $ i $-th argument. 
Then we set $ \theta_e = (1a) $
if the node is in the odd part of~$ T_0 (S) $, 
and $ \theta_e = (2a) $ if the node is in the even part.

Consider the action of the constructed isotopy on the decomposition $ S $. 
A node of the decomposition tree $ T_0 (S) $ labeled
 by $ f_i $ will get the label  $ f'_i $ 
 (see (\ref {eq:isot_i})) in the tree $ T_0 (\theta (S)) $. 
 Suppose $ f_i $ is $ \{0, a \} $-semilinear in the $ j $-th argument. Then, by Lemma~\ref {l:isemilinear}, the quasigroup $ f'_i $ is $ \{0,1 \} $-semilinear in the $ j $-th argument
if $ f_i $ is an odd node, and $ \{0,2 \} $-semilinear in the $ j $-th argument if $ f_i $ is an even node.

Thus, the decomposition $ \theta(S) $ is proper by the definition. 
By Lemma~\ref{l:isotop}, the quasigroup represented
is isotopic to the original quasigroup $ f $.
\end{proof} 

\section{Autotopies of quasigroups}\label{s:au}

\hspace*{\parindent} Let $\pi=(\pi_0,\ldots,\pi_m)$ and $\tau=(\tau_0,\ldots,\tau_{n-m+1})$ be isotopies. 
If $\pi_0=\tau_1$, then we define
$$\pi\dot{\otimes}\tau=(\tau_0,\pi_1,\ldots,\pi_m,\tau_2,\ldots,\tau_{n-m+1}).$$

Let us consider the $n$-ary quasigroup $f$
obtained as the composition of an $m$-quasigroup~$g$ and $(n-m+1)$-quasigroup $h$:
$$f(x_1,\ldots,x_n)= h(g(x_1,\ldots,x_m),x_{m+1},\ldots,x_n).$$
We define the action of the operation $\dot{\otimes}$ on the autotopy groups of $g$ and $h$ as follows:
$$
	\mathrm{Atp}(g)\dot{\otimes}\mathrm{Atp}(h)=
	\{\pi\dot{\otimes}\tau\mid \pi=(\pi_0,\ldots,\pi_m)\in\mathrm{Atp}(g),\tau=(\tau_0,\ldots,\tau_{n-m+1})\in\mathrm{Atp}(h), \pi_0=\tau_1\}.
$$
We restrict ourselves by considering quasigroups of order $4$ only;
however, the next lemma holds for any other order as well.

\begin{lemma}
\label{lem:composition}
If $f$ is an $n$-quasigroup represented as the composition 
$$f(x_1,\ldots,x_n)= h(g(x_1,\ldots,x_m),x_{m+1},\ldots,x_n),$$ then 
$$\mathrm{Atp}(f)=\mathrm{Atp}(g)\dot{\otimes}\mathrm{Atp}(h).$$
\end{lemma}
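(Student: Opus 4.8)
The plan is to prove the set equality by establishing the two inclusions separately. The inclusion $\mathrm{Atp}(g)\dot{\otimes}\mathrm{Atp}(h)\subseteq\mathrm{Atp}(f)$ is a direct verification: given $\pi=(\pi_0,\ldots,\pi_m)\in\mathrm{Atp}(g)$ and $\tau=(\tau_0,\ldots,\tau_{n-m+1})\in\mathrm{Atp}(h)$ with $\pi_0=\tau_1$, I would set $\theta=\pi\dot{\otimes}\tau$ and substitute into $f(\theta_1 x_1,\ldots,\theta_n x_n)=h(g(\pi_1 x_1,\ldots,\pi_m x_m),\tau_2 x_{m+1},\ldots,\tau_{n-m+1}x_n)$. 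The autotopy identity for $g$ rewrites the inner argument as $\pi_0 g(x_1,\ldots,x_m)=\tau_1 g(x_1,\ldots,x_m)$, and then the autotopy identity for $h$ pulls $\tau_0=\theta_0$ outside, yielding $\theta_0 f(x_1,\ldots,x_n)$; hence $\theta\in\mathrm{Atp}(f)$.

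The reverse inclusion is the substantive part. Given $\theta=(\theta_0,\ldots,\theta_n)\in\mathrm{Atp}(f)$, I would recover the only possible factorization by setting $\pi:=(\pi_0,\theta_1,\ldots,\theta_m)$ and $\tau:=(\theta_0,\pi_0,\theta_{m+1},\ldots,\theta_n)$, where $\pi_0$ is a permutation yet to be constructed. With these definitions one checks at once that $\theta=\pi\dot{\otimes}\tau$ and that the compatibility $\pi_0=\tau_1$ holds, so everything reduces to producing $\pi_0$ and verifying the two autotopy identities. To build $\pi_0$, I would consider the $m$-quasigroup $g'(x_1,\ldots,x_m):=g(\theta_1 x_1,\ldots,\theta_m x_m)$ and claim there is a single permutation $\pi_0$ with $g'=\pi_0\circ g$; granting this, $\pi\in\mathrm{Atp}(g)$ follows immediately from the definition of $g'$.

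The main obstacle, and the heart of the argument, is the well-definedness of $\pi_0$. I would fix the tail $x_{m+1},\ldots,x_n$ and read the autotopy identity for $f$ as $H'\bigl(g'(x_1,\ldots,x_m)\bigr)=\theta_0\,H\bigl(g(x_1,\ldots,x_m)\bigr)$, where $H(t)=h(t,x_{m+1},\ldots,x_n)$ and $H'(t)=h(t,\theta_{m+1}x_{m+1},\ldots,\theta_n x_n)$ are bijections of $\Sigma$ because $h$ is invertible in its first argument. Thus $g'(x_1,\ldots,x_m)=\phi\bigl(g(x_1,\ldots,x_m)\bigr)$ with $\phi:=H'^{-1}\circ\theta_0\circ H$, a permutation of $\Sigma$ depending \emph{a priori} on the tail. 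But the left-hand side $g'(x_1,\ldots,x_m)$ does not depend on the tail at all, while $g$ is surjective; hence $\phi(a)$ is constant in the tail for every $a\in\Sigma$, so $\phi$ is one fixed permutation, which I take as $\pi_0$. This simultaneously gives $g'=\pi_0\circ g$ and, rewriting $H'\circ\pi_0=\theta_0\circ H$ in expanded form, the identity $h(\pi_0 t,\theta_{m+1}x_{m+1},\ldots,\theta_n x_n)=\theta_0\,h(t,x_{m+1},\ldots,x_n)$ valid for all arguments, which is exactly $\tau\in\mathrm{Atp}(h)$. Since $\theta=\pi\dot{\otimes}\tau$, the reverse inclusion follows and the proof is complete.
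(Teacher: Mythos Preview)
Your proof is correct and follows essentially the same approach as the paper: both establish the easy inclusion directly and, for the reverse, construct the missing permutation $\pi_0$ by freezing the tail variables and exploiting the invertibility of $h$ in its first argument. The only organizational difference is that the paper fixes a single tail (all zeros) to define $\pi_0$ and then separately argues that $\pi\in\mathrm{Atp}(g)$ implies $\tau\in\mathrm{Atp}(h)$, whereas you consider all tails at once and use the surjectivity of $g$ to show the resulting permutation is tail-independent, which yields both autotopy identities simultaneously; this is a minor streamlining, not a different method.
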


\begin{proof}
Obviously, $\mathrm{Atp}(g)\dot{\otimes}\mathrm{Atp}(h)\leqslant\mathrm{Atp}(f)$. To prove the reverse, consider an autotopy 
$\theta=(\tau_0,\pi_1,\ldots,\pi_m,\tau_2,\ldots,\tau_{n-m+1})\in\mathrm{Atp}(f)$. Let us show that there exists a permutation
$\pi_0=\tau_1\in{S_4}$ such that 
$\pi=(\pi_0,\ldots,\pi_m)\in\mathrm{Atp}(g)$, 
$\tau=(\tau_0,\ldots,\tau_{n-m+1})\in\mathrm{Atp}(h)$,
and $\theta=\pi\dot{\otimes}\tau$.

Note that if such permutation $\pi_0$ exists, then 
it is uniquely defined by the permutations $\pi_1$, \ldots, $\pi_m$, 
because for every tuple from $\Sigma^n$ the quasigroup $g$ posseses only one value. 
Moreover, if we put 
$\pi_0=\tau_1$, then $\pi\in\mathrm{Atp}(g)$ if and only if $\tau\in\mathrm{Atp}(h)$. 
Indeed, the relation $\pi\in\mathrm{Atp}(g)$, by the definition,
means that the equations
\begin{multline*}
	x_0=h(g(x_1,\ldots,x_m),x_{m+1},\ldots,x_n) \text{ and }\\
	x_0=h(\pi_0 g(\pi_1^{-1}x_1,\ldots,\pi_m^{-1}x_m),x_{m+1},\ldots,x_n)
\end{multline*}
are equivalent. 
Applying the autotopy $\theta\in\mathrm{Atp}(f)$, we get for any tuple
$(x_1,\ldots,x_n)\in{\Sigma^n}$:
\begin{multline*}
	x_0=h(g(x_1,\ldots,x_m),x_{m+1},\ldots,x_n)=\\
	=\tau_0^{-1}h(\pi_0g(x_1,\ldots,x_m),\tau_2 x_{m+1},\ldots,\tau_{n-m+1}x_n).
\end{multline*}
The last equality implies that for any $(t,x_{m+1},\ldots,x_n)\in{\Sigma^{n-m+1}}$ it holds
$$
	h(t,x_{m+1},\ldots,x_n)=\tau_0^{-1}h(\pi_0 t,\tau_2 x_{m+1},\ldots,\tau_{n-m+1}x_n).
$$
That is, for $\tau_1=\pi_0$ we have $\tau\in\mathrm{Atp}(h)$.

So, it remains to show that there exists a permutation $\pi_0\in{S_4}$ such that
$\pi\in\mathrm{Atp}(g)$. 
Taking into account that $\theta\in\mathrm{Atp}(f)$, we can write that for every
${(x_1,\ldots,x_m)}\in{\Sigma^m}$ it holds
\begin{equation}
\label{eq:zeroes}
	x_0=h(g({x}),0,\ldots,0)=\tau_0^{-1}h(g(\pi_1 x_1,\ldots,\pi_m x_m),\tau_2(0),\ldots,\tau_{n-m+1}(0)).
\end{equation}
Trivially, the $1$-quasigroups
$$q_1(s)=h(s,0,\ldots,0),\quad q_2(t)=\tau_0^{-1}h(t,\tau_2(0),\ldots,\tau_{n-m+1}(0))$$ 
are permutations of $\Sigma$. So, (\ref{eq:zeroes}) can be rewritten as follows:
$$g({x})=q_1^{-1} ( q_{2}( g  (\pi_1x_1,\ldots,\pi_mx_m))) .$$
Defining $\pi_0(\cdot)=q_2^{-1}(q_1(\cdot)) $, we have $(\pi_0,\ldots,\pi_m))\in\mathrm{Atp}(g)$.
\end{proof}

As follows from Lemma~\ref{lem:composition} and the results of the previous section,
studying the autotopy group of a quasigroup of order $4$ we can regard that it is represented 
as the repetition-free composition of quasigroups, each of which is 
$\{0,a\}$-semilinear for some $a\in\Sigma$, but not linear.

In the remaining part of this section, we prove three lemmas on minimum autotopy groups
of semilinear quasigroups.
In the description of autotopies, it is convenient to use the following notation.

For a nonlinear $\{0,a\}$-semilinear quasigroup
(and the corresponding nodes of decomposition trees), $a\in\Sigma\setminus\{0\}$, 
the permutation $(0a)(bc)$, where $\{b,c\}=\Sigma\setminus\{0,a\}$, is called 
the \emph{native involution}, 
and the permutations $(0b)(ca)$ and $(0c)(ab)$ are called  \textit{foreign involutions}.
Each of the transpositions $(0a)$ and $(bc)$ forming the native involution $(0a)(bc)$
is called a  
\emph{native transposition} of the semilinear quasigroup (node).
The two cyclic permutations $(0bac)$ and $(0cab)$ whose square is the native involution $(0a)(bc)$
are called the \emph{native cycles} of the semilinear quasigroup (node).

\begin{lemma}\label{l:2rodn}
The following isotopies belong to the autotopy group of a
$\{0,a\}$-semilinear $n$-ary quasigroup $f$, $a\in\Sigma\setminus\{0\}$.
\par
{\rm (i)} An isotopy consisting of two native involutions and $n-1$ identity permutations, in an arbitrary order.\par
{\rm (ii)} An isotopy from $n+1$ native transpositions different from $(0a)$ if $f(\{0,a\}^n)=\{0,a\}$,
or from $n+1$ native transpositions exactly one of which is $(0a)$
if $f(\{0,a\}^n)=\Sigma\backslash\{0,a\}$.
\end{lemma}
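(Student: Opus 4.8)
The plan is to reduce both parts to a one-line computation in $\mathbb{Z}_2$ after first recording the arithmetic structure of a $\{0,a\}$-semilinear quasigroup. Write $\{b,c\}=\Sigma\setminus\{0,a\}$ and introduce two maps $\varphi,\psi\colon\Sigma\to\mathbb{Z}_2$: let $\varphi$ be the indicator of the coset $\{b,c\}$ (so $\varphi(0)=\varphi(a)=0$, $\varphi(b)=\varphi(c)=1$), and let $\psi$ be the within-coset coordinate with $\psi(0)=\psi(b)=0$, $\psi(a)=\psi(c)=1$. Since $f$ is $\{0,a\}$-semilinear in every argument, it respects the partition $\{\{0,a\},\{b,c\}\}$ coordinatewise, so I would first prove that there is a constant $\kappa\in\mathbb{Z}_2$ with $\varphi(f(x))=\varphi(x_1)+\dots+\varphi(x_n)+\kappa$, where $\kappa=0$ exactly when $f(\{0,a\}^n)=\{0,a\}$ and $\kappa=1$ exactly when $f(\{0,a\}^n)=\{b,c\}$. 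On each block $B_\varepsilon=\varphi^{-1}(\varepsilon_1)\times\dots\times\varphi^{-1}(\varepsilon_n)$ the map $f$ takes values in a single coset, so its restriction is an $n$-ary quasigroup of order $2$, hence affine; this yields $\psi(f(x))=\psi(x_1)+\dots+\psi(x_n)+d_\varepsilon$ for some $d_\varepsilon\in\mathbb{Z}_2$ depending only on the block $\varepsilon=(\varphi(x_1),\dots,\varphi(x_n))$.

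Next I would record how the native permutations act via $\varphi$ and $\psi$. Each of $(0a)(bc)$, $(0a)$, $(bc)$ fixes $\varphi$, hence fixes every block $B_\varepsilon$ — this is the point that makes the a priori unknown constants $d_\varepsilon$ harmless throughout. On $\psi$ one checks directly that the native involution $(0a)(bc)$ acts by $\psi\mapsto\psi+1$, the transposition $(0a)$ acts by $\psi\mapsto\psi+1+\varphi$ (it flips $\psi$ only on the coset $\{0,a\}$), and $(bc)$ acts by $\psi\mapsto\psi+\varphi$ (it flips $\psi$ only on $\{b,c\}$).

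For part~(i), I would place two copies of $(0a)(bc)$ in any two of the $n+1$ positions, identities elsewhere, and verify the defining identity $f(\theta_1x_1,\dots,\theta_nx_n)=\theta_0 f(x)$ by comparing $\varphi$ and $\psi$ on both sides. The $\varphi$-parts agree because native involutions fix $\varphi$ (so both sides lie in the same block, and $d_\varepsilon$ cancels). Comparing $\psi$, each copy of $(0a)(bc)$ contributes a single summand $+1$, so the two copies contribute $+1+1=0$; this holds uniformly whether both involutions sit among the arguments $1,\dots,n$ or one of them is $\theta_0$, which settles~(i). For part~(ii), take $\theta_k\in\{(0a),(bc)\}$ for all $k$ and let $A$ be the set of positions carrying $(0a)$. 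The $\varphi$-parts match automatically; summing the $\psi$-contributions and substituting $\varphi(f(x))=\sum_k\varphi(x_k)+\kappa$, the whole identity collapses to the single parity condition $|A|\equiv\kappa\pmod 2$ (again $d_\varepsilon$ cancels, as the blocks are fixed). The two cases in the statement are precisely the minimal choices realising this parity: $A=\varnothing$, i.e.\ all transpositions equal to $(bc)$, when $\kappa=0$ (that is, $f(\{0,a\}^n)=\{0,a\}$); and $|A|=1$, i.e.\ exactly one $(0a)$, when $\kappa=1$ (that is, $f(\{0,a\}^n)=\{b,c\}$).

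The only genuine obstacle is the preliminary structure lemma, and within it the assertion that each block restriction is affine. This rests on the elementary fact that every $n$-ary quasigroup of order $2$ equals $y_1+\dots+y_n+d$ over $\mathbb{Z}_2$ — each of its first differences is identically $1$, so subtracting $y_1+\dots+y_n$ gives a function with all differences $0$, hence a constant. Once $\varphi(f)$ and $\psi(f)$ are pinned down, parts~(i) and~(ii) are short $\mathbb{Z}_2$ checks; the reusable conceptual point is that the native permutations preserve the coset partition and therefore never disturb the unknown block constants $d_\varepsilon$.
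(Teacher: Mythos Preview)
Your argument is correct. Conceptually it rests on the same two facts the paper uses---native permutations preserve the coset partition $\{\{0,a\},\{b,c\}\}$, and on each block the restriction of $f$ is an order-$2$ quasigroup, hence flipping one input flips the output---but you package them differently. The paper argues directly: for~(i) it observes that applying $(0a)(bc)$ in a single argument leaves every block $\{a_1,b_1\}\times\dots\times\{a_n,b_n\}$ setwise fixed and therefore forces the value to change by the same involution, so two applications cancel; for~(ii) it counts how many of $x_0,\dots,x_n$ lie in $\{b,c\}$ (even when $\kappa=0$) and notes that applying $(bc)$ to each coordinate toggles the value exactly that many times within its coset. You instead isolate the structure once and for all via the $\mathbb{Z}_2$-valued coordinates $\varphi,\psi$ and the identities $\varphi(f(x))=\sum\varphi(x_i)+\kappa$, $\psi(f(x))=\sum\psi(x_i)+d_\varepsilon$, then read off both parts as parity checks. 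Your route is a bit longer to set up but yields a cleaner bookkeeping (the unknown $d_\varepsilon$ visibly cancels) and incidentally proves the stronger statement that \emph{any} tuple of native transpositions with $|A|\equiv\kappa\pmod 2$ is an autotopy, not just the particular choices $|A|\in\{0,1\}$ singled out in the lemma.
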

\begin{proof} Without loss of generality we assume $a=1$.
The identity (\ref{eq:semilinear}) holds for any $\{a_i,b_i\}$ from $\{\{0,1\},\{2,3\}\}$, $i=1,\ldots,n$
(the pair $\{a_0,b_0\}$ is uniquely defined from the other pairs and also coincides with
$\{0,1\}$ or $\{2,3\}$).

(i) Applying the native involution $ (01) (23) $ in one of the arguments changes the values of the quasigroup in all points,
but at the same time leaves the sets $ \{a_1, b_1 \} \times \ldots \times \{a_n, b_n \} $ with the above restrictions in place.
It follows from (\ref{eq:semilinear}) that the values of the quasigroup also change in accordance with the native involution.
When applying the native involution in some other argument, we again obtain the original quasigroup.

(ii) Let $f(\{0,1\}^n)=\{0,1\}$. 
Consider an arbitrary tuple $ (x_1, \ldots, x_n) $ of values of the arguments and the value $ x_0 $ of the quasigroup on this tuple.
Among $ x_0 $, $ x_1 $, \ldots, $ x_n $, an even number of values belong to $ \{2,3 \} $. Thus, applying successively the transposition $ (23) $ to each of the arguments,
we  change the value of the quasigroup an even number of times, 
and the changes do not take the value in a partial point beyond the pair $ \{0,1 \} $ 
or the pair $\{2,3 \} $.
As a result, we get that after applying all transpositions, the value of the quasigroup has not changed. The case $ f (\{0,1 \} ^ n) = \{2,3 \} $ is treated similarly.
\end{proof}

\begin{lemma}\label{l:aut3}
Assume that an $\{a,b\}$-semilinear binary quasigroup $q$ of order $4$ is not linear.
Let $\xi$ be the corresponding native involution. 
The autotopy group of~$q$ consists of the following transformations.\par
{\rm (i)}\;The autotopies $(\mathrm{Id},\xi,\xi),$ $(\xi,\mathrm{Id},\xi),$ $(\xi,\xi,\mathrm{Id}),$ and the identity autotopy.\par
{\rm (ii)}\;The autotopies  $(\tau_0,\tau_1,\tau_1),$ $(\tau_1,\tau_0,\tau_1),$ $(\tau_1,\tau_1,\tau_0),$ $(\tau_0,\tau_0,\tau_0),$ 
where $\tau_0$,  $\tau_1$ are the two distinct native transpositions; the choice of $\tau_0$ 
is unique for the given $q$. \par
{\rm (iii)}\;The autotopies  $(\xi',\varphi_1,\varphi_2),$ $(\varphi_1,\xi'',\varphi_2),$ $(\varphi_1,\varphi_2,\xi'''),$ 
where $\varphi_1$, $\varphi_2$ is an arbitrary pair of native cycles, for which the permutations
 $\xi',\xi'',\xi'''\in\{\mathrm{Id},\xi\}$ are uniquely defined.\par
{\rm (iv)}\;The autotopies  $(\tau',\psi_1,\psi_2),$ $(\psi_1,\tau'',\psi_2),$ $(\psi_1,\psi_2,\tau'''),$ 
where $\psi_1$, $\psi_2$ is an arbitrary pair of foreign involutions, for which the native transpositions $\tau'$, $\tau''$, $\tau'''$ are uniquely defined.
\end{lemma}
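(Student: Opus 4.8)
The plan is to trap each component of an autotopy in an explicit group of order $8$, impose a parity condition that leaves only two cases, and resolve those by a short finite computation. After relabelling $\Sigma$ (harmless, and allowed since by the remark following the definition of semilinearity we may assume $0\in\{a,b\}$), I may take $\{a,b\}=\{0,2\}$, so that $\xi=(02)(13)$, the native transpositions are $(02)$ and $(13)$, the native cycles are $(0123)$ and $(0321)$, and the foreign involutions are $(01)(23)$ and $(03)(12)$.

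First I would confine the components. Since $q$ is not linear, Lemma~\ref{l:linear}(ii) shows that in each argument $q$ is $\{a',b'\}$-semilinear only for $\{a',b'\}\in\{\{0,2\},\{1,3\}\}$; thus the partition $P=\{\{0,2\},\{1,3\}\}$ is the unique semilinear partition. Applying Lemma~\ref{l:isemilinear} to an arbitrary autotopy $\theta=(\theta_0,\theta_1,\theta_2)$ (regarded as an isotopy carrying $q$ to itself) forces each $\theta_i$ to map $\{0,2\}$ onto $\{0,2\}$ or $\{1,3\}$; hence every $\theta_i$ preserves $P$ and lies in the stabiliser $D=\mathrm{Stab}(P)$, a group of order $8$. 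Its eight elements are exactly $\mathrm{Id}$, the two native transpositions and $\xi$ (which fix both blocks set-wise), together with the two foreign involutions and the two native cycles (which interchange the two blocks).

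Next I would extract a parity constraint. Collapsing each block of $P$ to a point turns $q$ into a binary quasigroup $\bar q$ of order $2$ --- this is exactly what $\{0,2\}$-semilinearity says, e.g. $q(\{0,2\}\times\{1,3\})=\{1,3\}$ --- and $\bar q\cong\mathbb{Z}_2$. Because each $\theta_i$ preserves $P$, the autotopy $\theta$ descends to an autotopy $(\bar\theta_0,\bar\theta_1,\bar\theta_2)$ of $\bar q$, where $\bar\theta_i\in S_2$ records whether $\theta_i$ fixes or swaps the two blocks. Writing $\sigma$ for the nonidentity element of $S_2$, the autotopies of $\mathbb{Z}_2$ are $(\mathrm{Id},\mathrm{Id},\mathrm{Id})$, $(\sigma,\sigma,\mathrm{Id})$, $(\sigma,\mathrm{Id},\sigma)$, $(\mathrm{Id},\sigma,\sigma)$, so an even number of the $\bar\theta_i$ are nontrivial. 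Hence among $\theta_0,\theta_1,\theta_2$ the number of block-swapping components is $0$ or $2$; the block-swapping elements of $D$ are precisely the foreign involutions and the native cycles, the block-fixing ones being $\mathrm{Id}$, the native transpositions and $\xi$.

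It remains to identify the genuine autotopies in the two cases, using that any two components determine the third (given $\theta_1,\theta_2$ there is at most one compatible $\theta_0$, as in the proof of Lemma~\ref{lem:composition}). In the zero-swap case all three components lie in the Klein four-group $\{\mathrm{Id},(02),(13),\xi\}$; Lemma~\ref{l:2rodn} already provides the autotopies of families (i) and (ii), the diagonal $(\tau_0,\tau_0,\tau_0)$ singling out the distinguished native transposition $\tau_0$ as the unique one that is an automorphism of $q$, and a short check shows these eight exhaust the case. In the two-swap case one component is block-fixing and the other two are block-swapping; sorting by whether the swapping pair consists of native cycles or of foreign involutions yields families (iii) and (iv), with the remaining component ($\xi',\xi'',\xi'''\in\{\mathrm{Id},\xi\}$ in (iii), a native transposition in (iv)) forced by the other two. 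The main obstacle is precisely this last step: proving that every admissible pair of block-swapping permutations actually extends to an autotopy and computing the forced third component. Here the fine structure of $q$ (equivalently its $\mathbb{Z}_4$-isotopy type) enters, not merely the partition $P$; it reduces to checking the relations $q(\varphi_1 x,\varphi_2 y)=\xi'(q(x,y))$ block by block. A convenient shortcut is to first reduce $q$ to $\mathbb{Z}_4$ by a $P$-respecting isotopy and invoke the description of the autotopies of an abelian group, $\theta_i=t_i\circ\phi$ with $\phi\in\mathrm{Aut}(\mathbb{Z}_4)$ and $t_i$ a translation, which renders the $32=2\cdot 4\cdot 4$ autotopies and their partition into the four families transparent.
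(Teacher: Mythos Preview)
Your argument is correct and complete, but it proceeds along a different route from the paper's. The paper's proof is essentially a direct verification plus a counting trick: it checks by hand (for $+_4$) that each of the listed isotopies is an autotopy and that the listed set is closed under composition, then observes---via the same confinement $\theta_i\in D$ that you use---that there are at most $8\cdot 8=64$ candidates, exhibits one candidate pair $(\theta_1,\theta_2)=(\mathrm{Id},(01))$ that fails, and concludes by Lagrange that the order is exactly~$32$.

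Your approach is more structural. After the same confinement to the order-$8$ group $D$, you (a) pass to the quotient $\bar q$ on $P$ to get the parity constraint, and (b) reduce $q$ to $\mathbb{Z}_4$ by a $P$-respecting isotopy and read off the autotopies from the classical description $\theta_i=t_{c_i}\circ\phi$ with a common $\phi\in\mathrm{Aut}(\mathbb{Z}_4)$ and $c_0=c_1+c_2$. Step~(b) is really doing the work: it simultaneously gives the count $2\cdot 4\cdot 4=32$, explains the four families (the pair $(\phi,\,c_i\bmod 2)$ is the family label), and excludes the ``mixed'' swapping pairs that your sorting in the two-swap case tacitly rules out. Note that the existence of a $P$-respecting isotopy to $\mathbb{Z}_4$ is not an extra assumption: any isotopy between two nonlinear $\{0,2\}$-semilinear quasigroups automatically lies in $D^3$ by Lemma~\ref{l:isemilinear} and Lemma~\ref{l:linear}(ii), the same reasoning you used to confine autotopies. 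Your route is cleaner conceptually and generalizes (the group-autotopy description works for any group), while the paper's is more self-contained and avoids invoking that external fact.
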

\begin{proof}
It can be directly checked that each of the presented isotopies is an autotopy of $q$.
To do this, it is sufficient to consider the $\{0,2\}$-semilinear quasigroup $+_4$ 
(see Example~\ref{ex:1} below), because all quasigroups satisfying the hypothesis of the lemma are isotopic to  $+_4$.
It is easy to see that the set of presented autotopies is closed under the composition;
that is, this set forms a group.
 
 The completeness is checked numerically.
 There are
 $4$ autotopies of each of the type (i), (ii) and $12$ autotopies of each of the type (iii), (iv);
 totally we have $32$ autotopies.
 On the other hand, we can bound the number of autotopies from the upper side.
 It follows from the nonlinearity of $+_4$ and Lemma~\ref{l:isemilinear} that for an
 arbitrary autotopy $(\psi_0,\psi_1,\psi_2)$ 
 each of the permutations $\psi_0$, $\psi_1$, $\psi_2$ maps $\{0,2\}$ to $\{0,2\}$ or $\{1,3\}$.
 There are $8$ ways to choose $\psi_1$ meeting this condition, and $8$ ways for $\psi_2$;
 by the definition of a quasigroup, $\psi_0$ is determined uniquely from $\psi_1$ and $\psi_2$.
 Moreover, it is easy to check that there is no autotopy with
  $\psi_1=\mathrm{Id}$ and $\psi_2=(01)$.
 It follows that the order of the autotopy group is less than $64$;
 hence, this group coincides with the group from the autotopies (i)--(iv). 
\end{proof}

\begin{example}\label{ex:1}
Consider the binary quasigroup $+_4$ defined in (\ref{eq:Z22,Z4}).
The permutation $ (02) (13) $ is the native involution for $ q $; 
the permutations $ (02) $ and $ (13) $ are
the native transpositions for $ q $, and $ (0123) $, $ (0321) $ are the native cycles. The autotopy group of $ q $ is generated by the following (strictly speaking, redundant) set of autotopies:
\par
(i) $((02)(13),(02)(13),\mathrm{Id})$, $((02)(13),\mathrm{Id},(02)(13))$, $(\mathrm{Id},(02)(13),(02)(13))$;\par
(ii) $((13),(13),(13))$;\par
(iii) $(\mathrm{Id},(0123),(0321))$;\par
(iv) $((13),(03)(12),(01)(23))$, $((02),(01)(23),(01)(23))$, $((02),(03)(12),(03)(12))$.
\end{example}

Thus, we know the group of autotopies of the unique, up to isotopy, nonlinear binary quasigroup.
In addition, we need examples of semilinear $ 3 $- and $ 4 $-ary quasigroups with the minimal group of autotopies.
We define the $ n $-ary quasigroup $ l_n ^ \bullet $ by the identity

$$l_n^\bullet(x)= \begin{cases}
                    l_n(x)\oplus 2,& \mbox{if $x\in\{0,2\}^n$,}  \cr l_n(x), & \mbox{if $x\not\in\{0,2\}^n$.} 
                       \end{cases}
$$

\begin{lemma}\label{l:aut4}
 If $n\ge 3$ then the autotopy group of $l_n^\bullet$ 
 is generated by the autotopies from Lemma~\ref{l:2rodn} and has the order $2^{n+1}$.
\end{lemma}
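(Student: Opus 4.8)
The plan is to realize $l_n^\bullet$ as a small perturbation of the $\mathbb{Z}_2\times\mathbb{Z}_2$-linear quasigroup $l_n$ and to reduce the determination of $\mathrm{Atp}(l_n^\bullet)$ to a linear-algebra problem over $\mathbb{F}_2$. First I identify $\Sigma$ with $\mathbb{F}_2^2$ via $0\mapsto(0,0)$, $2\mapsto(1,0)$, $1\mapsto(0,1)$, $3\mapsto(1,1)$, so that $\oplus$ becomes coordinatewise addition, the block $\{0,2\}$ is $\{v=0\}$, and, writing $x_i=(u_i,v_i)$, one gets $l_n^\bullet(x)=\bigl(\sum_i u_i+\delta(v),\ \sum_i v_i\bigr)$ with $\delta(v)=\prod_{i=1}^n(1+v_i)$ (the indicator of $x\in\{0,2\}^n$). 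The quasigroup $l_n^\bullet$ is visibly $\{0,2\}$-semilinear; it is nonlinear because the restriction obtained by fixing $x_3=\dots=x_n=0$ is the nonlinear binary quasigroup isotopic to $+_4$, so $l_n^\bullet$ is not $\{0,1\}$-semilinear and Lemma~\ref{l:linear}(ii) does not force linearity. Consequently, by Lemma~\ref{l:isemilinear} applied in each argument, every component $\theta_i$ of an autotopy preserves the partition $\bigl\{\{0,2\},\{1,3\}\bigr\}$, i.e.\ lies in the order-$8$ stabilizer of this partition. In $\mathbb{F}_2^2$-coordinates every such permutation acts as $(u,v)\mapsto(u+\alpha+\beta v,\ v+\gamma)$ for a unique triple $(\alpha,\beta,\gamma)\in\mathbb{F}_2^3$, so I attach $(\alpha_i,\beta_i,\gamma_i)$ to $\theta_i$, $i=0,\dots,n$.

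Next I write the autotopy identity $l_n^\bullet(\theta_1x_1,\dots,\theta_nx_n)=\theta_0\,l_n^\bullet(x)$ in these coordinates and read it off coordinate by coordinate. The $v$-coordinate gives the single relation $\gamma_0=\sum_{i=1}^n\gamma_i$. The $u$-coordinate gives, after cancelling $\sum_i u_i$, the polynomial identity in $v\in\mathbb{F}_2^n$
\[
\sum_{i=1}^n\alpha_i+\sum_{i=1}^n\beta_i v_i+\delta(v')=\delta(v)+\alpha_0+\beta_0\sum_{i=1}^n v_i,
\]
where $\delta(v')=\prod_{i=1}^n(1+v_i+\gamma_i)$. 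Every term except $\delta(v)$ and $\delta(v')$ is affine in $v$, so the identity forces $\delta(v)+\delta(v')$ to be affine (degree at most $1$).

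The heart of the argument — and the step I expect to be the main obstacle — is to extract from this affineness the conclusion that all $\gamma_i=0$ once $n\ge3$. Writing $G=\{i:\gamma_i=1\}$ and factoring over disjoint variable sets,
\[
\delta(v)+\delta(v')=\Bigl(\prod_{i\notin G}(1+v_i)\Bigr)\Bigl(\prod_{i\in G}(1+v_i)+\prod_{i\in G}v_i\Bigr),
\]
the second factor has its top monomial $\prod_{i\in G}v_i$ cancelled, hence (for $G\ne\emptyset$) it is a nonzero multilinear polynomial of degree $|G|-1$, while the first factor is nonzero of degree $n-|G|$. Their product is therefore a nonzero multilinear polynomial of degree exactly $n-1$, which for $n\ge3$ contains a monomial of degree $\ge2$ that no affine right-hand side can match. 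Hence $G=\emptyset$, i.e.\ every $\theta_i$ fixes both blocks and lies in the Klein group $V=\{\mathrm{Id},(02),(13),(02)(13)\}$. With all $\gamma_i=0$ the displayed identity collapses to $\sum_{i=1}^n\alpha_i+\sum_{i=1}^n\beta_iv_i=\alpha_0+\beta_0\sum_{i=1}^n v_i$; matching constant and linear coefficients yields $\beta_0=\beta_1=\dots=\beta_n$ and $\alpha_0=\sum_{i=1}^n\alpha_i$. Thus an autotopy is determined by the free choice of the common value $\beta\in\mathbb{F}_2$ and of $\alpha_1,\dots,\alpha_n\in\mathbb{F}_2$ (with $\alpha_0$ then forced), giving exactly $2\cdot2^n=2^{n+1}$ autotopies.

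It remains to check that these $2^{n+1}$ autotopies are precisely those generated by Lemma~\ref{l:2rodn}; since that lemma guarantees they are autotopies, it suffices to see they generate a subgroup of order $2^{n+1}$, which must then be the whole group. In the coordinates above a type-(i) generator places the native involution $(02)(13)$, i.e.\ $(\alpha,\beta)=(1,0)$, in two positions and the identity elsewhere; these flip $\alpha$ in two coordinates at a time and so generate exactly the subgroup $\{(\alpha,0):\alpha\in\mathbb{F}_2^{n+1},\ \sum_{i=0}^n\alpha_i=0\}$ of order $2^n$. The type-(ii) generator is $(13)=(0,1)$ in every position, i.e.\ the element with common $\beta=1$ and all $\alpha_i=0$; adjoining it doubles the order to $2^{n+1}$ and fills out the full solution set found above. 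This establishes the generation statement and the order $2^{n+1}$ at once.
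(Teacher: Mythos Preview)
Your argument is correct and complete. The route, however, differs substantially from the paper's. The paper argues geometrically on the code level: $M(l_n)$ is a $2n$-dimensional affine subspace over $\mathrm{GF}(2)$, $M(l_n^\bullet)$ differs from it in only $2^n$ points, and since any other affine subspace of that dimension differs from $M(l_n)$ in at least $2^{2n-1}\ge 4\cdot 2^n$ points, $M(l_n)$ is the \emph{unique} closest affine subspace to $M(l_n^\bullet)$. Because every permutation of $\Sigma$ is an affine map over $\mathrm{GF}(2)$, an autotopy of $l_n^\bullet$ sends $M(l_n)$ to an affine subspace that is again closest to $M(l_n^\bullet)$, hence to $M(l_n)$ itself; thus $\mathrm{Atp}(l_n^\bullet)\le\mathrm{Atp}(l_n)$ and the difference set $B_n$ is stabilized, forcing each $\theta_i\in\{\mathrm{Id},(02),(13),(02)(13)\}$. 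The paper then appeals \emph{forward} to the description of $\mathrm{Atp}(l_n)$ in Section~\ref{s:up} to finish.

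Your approach is a direct computation in $\mathbb{F}_2^2$-coordinates: you first restrict each $\theta_i$ to the $8$-element partition stabilizer via Lemmas~\ref{l:linear}(ii) and~\ref{l:isemilinear}, then use a clean multilinear-degree argument (the factorization over $G=\{i:\gamma_i=1\}$ showing $\delta(v)+\delta(v')$ has degree exactly $n-1$ when $G\ne\emptyset$) to force all $\gamma_i=0$, and finally solve the remaining affine constraints explicitly. This is more elementary and fully self-contained --- in particular it avoids the forward reference to the structure of $\mathrm{Atp}(l_n)$ --- while the paper's ``unique closest affine subspace'' idea is more conceptual and would transplant more readily to other perturbations of linear objects. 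Both reach the same endpoint: the Klein-group restriction on each coordinate, and then the count $2^{n+1}$.
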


We prove Lemma~\ref{l:aut4} for any $n$; however, we note that only
the cases $n=3$ and $n=4$ will be used in the further discussion.
For these cases, the statement of Lemma~\ref{l:aut4} can be checked directly.
\begin{proof}
 Obviously, the autotopies from Lemma~\ref{l:2rodn} have order $ 2 $, commute and are linearly independent; whence the order of the group generated by them follows.
 
 The code $ M (l_n) $ of the quasigroup $ l_n $ is a $ 2n $-dimensional affine subspace of the vector space over the field GF$(2)$ of two elements with addition $\oplus$ and trivial multiplication by $0$ and $1$.
 
The code $M(l_n^\bullet)$ of $l_n^\bullet$ differs from the affine subspace 
$M(l_n)$ in the $2^n$ vertices of the set $B_n$, where 
$B_n=M(l_n^\bullet)\backslash M(l_n) = \{(l_n^\bullet (x),x)\mid x\in\{0,2\}^n\}$.
 Moreover, $M(l_n)$ is a unique closest (in the sense above) to $M(l_n^\bullet)$ affine subspace,
 because any other affine subspace of the same dimension differs from $M(l_n)$ 
 in at least $2^{2n-1}\ge 4\cdot 2^n$ vertices.
 Under the action of an autotopy of $l_n^\bullet$, the code $M(l_n^\bullet)$ 
 is mapped to itself (by the definition), while $M(l_n)$ is mapped to an affine subspace
 (indeed, it is easy to see that any permutation of $\Sigma$ 
 is an affine transformation over GF$(2)$), 
 which is also closest to $M(l_n^\bullet)$. 
 It follows that an autotopy of $l_n^\bullet$ is necessarily an autotopy of $l_n$. 
 Moreover, it also follows that under the action of such autotopy the set $B_n$
 (the difference between the codes of $l_n^\bullet$ and $l_n$) is mapped to itself.
 In particular, every permutation of that autotopy stabilizes the set $\{0,2\}$, i.e., is one of
 $\mathrm{Id}$, $(02)$, $(13)$, $(02)(23)$.
 As it follows from the description of the autotopy group of $l_n$ in Section~\ref{s:up}, 
 all such autotopies are combinations of the autotopies from Lemma~\ref{l:2rodn}.
\end{proof}

\section{A lower bound and quasigroups attaining it}\label{s:lb}
\subsection{The estimation}\label{s:lb1}

In this section, we consider an arbitrary quasigroup of order $4$ and prove 
a sharp lower bound for the order of its autotopy group. In particular, 
the autotopy group of a semilinear quasigroup is rather large. For a reducible 
quasigroup $f$, we show that the nodes of its decomposition tree $T_0(f)$ can be grouped 
into subsets, which we call bunches. Each bunch in $T_0(f)$ consists of nodes 
of the same parity, i.~e. it does not contain any adjacent nodes of the tree 
$T_0(f)$. A current subgroup of the autotopy group $\mathrm{Atp}(f)$ corresponds 
to each bunch, and the subgroups corresponding to different bunches are independent.

We now introduce additional notation and definitions concerning the representation 
of quasigroups in a form of a decomposition tree. Let $f$ be an $n$-ary quasigroup 
of order $4$ with a reduced decomposition $S$ and the decomposition tree $T=T_0(S)$.

$\bullet$\;Let $N=n+1$ denote the number of leaves in the tree $T$, and let $V$ be 
the number of nodes in $T$.

$\bullet$\;A \emph{bald} node is an inner vertex $u$ of the tree $T$ without leaves 
among the neighbors of $u$. Let $E$ equal the number of bald nodes in $T$.

$\bullet$\;A \emph{bridge} node, or simply \emph{bridge}, is a vertex $u$ of degree $3$ 
in the tree $T$ that is adjacent to exactly one leaf. The leaf adjacent to the bridge $u$ 
is called a \emph{bridge} leaf. Let $B$ equal the number of bridges in $T$.

$\bullet$\;A \emph{fork} is a vertex $u$ of degree $3$ in the tree $T$ that is adjacent 
to exactly two leaves. Let $F$ equal the number of forks in $T$.

$\bullet$\;By $G(T)$, we denote the graph with the set of nodes of the tree $T$ taken as 
the vertex set. Two vertices are adjacent in the graph $G(T)$ if the corresponding nodes are 
adjacent to the same bridge in $T$. It is easy to see that $G(T)$ is a forest.

$\bullet$\;A \emph{bunch} 
is a connected component of $G(T)$. 
Let $\mathit\Gamma$ equal the number of bunches in $T$.
 
$\bullet$\;For a bunch $G$ in $G(T)$, 
a leaf $x$ of the tree $T$ belongs to the \emph{leaf set} 
of $G$ if $x$ is adjacent to some node of $T$ included in $G$. 
A bunch of the graph $G(T)$ 
is called \emph{bald} if its leaf set is empty. Let $L$ equal the number of bald bunches in $T$.

It is worth to note that a bridge providing a corresponding edge in a bunch $G$ 
does not belong to $G$ as its vertex. The bridge being a node is contained 
in another bunch which differs from $G$. In addition, all bridges providing 
the edges of the bunch $G$ belong to one of two parts of the bipartite graph $T$ 
while the nodes of $G$ pertain to the other part of $T$.

For example, consider the decomposition tree designed in the Figure~\ref{fig1}. 
There are one bald node $\iota$, five bridges 
$\gamma$, $\delta$, $\zeta$, $\eta$, $\theta$, 
and one fork $\beta$. 
The nodes form seven bunches, namely 
$\{\alpha,\beta,\varepsilon,\eta,\iota\}$, 
$\{\gamma\}$,
$\{\delta\}$,
$\{\zeta,\theta\}$, 
$\{\kappa\}$,
$\{\lambda\}$,
$\{\mu\}$.
\tikzset{x=0.6cm,y=0.6cm,
	nodd/.style={rectangle,fill=white,draw=black,minimum size=0.5cm,rounded corners=4pt},
	leaf/.style={circle,fill=white,draw=black},
}
\begin{figure}[h]
$$
\begin{tikzpicture}
\draw (0,0) node[nodd,fill=red!50]{$\varepsilon$} -- 
++(-15:2) node[nodd,fill=yellow!60]{$\zeta$} -- 
 +(-90:1) node[leaf]{} +(0,0) --
++(15:2) node[nodd,fill=red!50]{$\eta$} --
 +(90:1) node[leaf]{} +(0,0) --
++(-15:2) node[nodd,fill=yellow!60]{$\theta$} --
 +(-90:1) node[leaf]{} +(0,0) --
++(15:2) node[nodd,fill=red!50]{$\iota$} --
++(80:1.8) node[nodd,fill=purple!70!blue!40]{$\kappa$} --
 +(170:1) node[leaf]{} +(0,0) --
 +(80:1) node[leaf]{} +(0,0) --
 +(-10:1) node[leaf]{} +(0,0)
++(80:-1.8) -- 
++(-15:2) node[nodd,fill=cyan!50]{$\lambda$}
 --
 +(-90:1) node[leaf]{} +(0,0) --
 +(90:1) node[leaf]{} +(0,0)
 --
++(5:2) node[nodd,fill=brown!50]{$\mu$} --
 +(-105:1) node[leaf]{} +(0,0) --
 +(-30:1) node[leaf]{} +(0,0) --
 +(30:1) node[leaf]{} +(0,0) --
 +(105:1) node[leaf]{} +(0,0)
 (0,0) --
  +(45:1) node[leaf]{} 
 (0,0) --
 ++(120:1.7)  node[nodd,fill=blue!30]{$\gamma$} --
 +(40:1) node[leaf]{} +(0,0) --
 ++(150:1.7)  node[nodd,fill=red!50]{$\alpha$} --
 +(240:1) node[leaf]{} +(0,0) --
 +(60:1) node[leaf]{} +(0,0) --
 +(150:1) node[leaf]{} +(0,0)
 (0,0) --
 ++(190:2)  node[nodd,fill=green!40]{$\delta$} --
 +(-85:1) node[leaf]{} +(0,0) --
 ++(-180:2)  node[nodd,fill=red!50]{$\beta$} --
 +(-140:1.1) node[leaf]{} +(0,0) --
 +(-220:1.1) node[leaf]{} +(0,0);
\end{tikzpicture}
$$
\caption{A decomposition tree}
\label{fig1}
\end{figure}
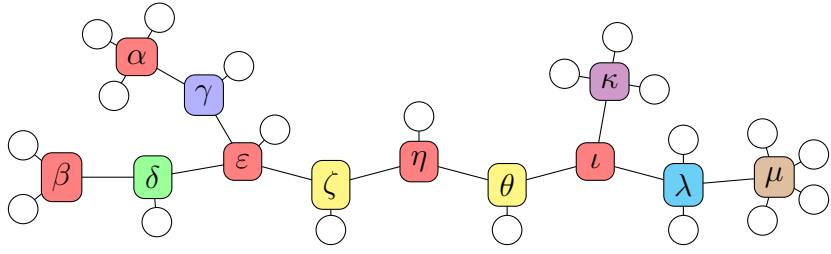

Since the number $V$ of nodes in a tree $T$ equals the number of vertices in the 
forest $G(T)$, the number $B$ of bridges in $T$ equals the number of edges in $G(T)$, 
and the number~$\mathit\Gamma$ of bunches in $T$ equals the number of connected 
components of $G(T)$, it follows that
\begin{equation}\label{eq:VGB}
\mathit\Gamma=V-B.
\end{equation}

It is evident that the number $\mathit\Gamma-L$ of non-bald bunches is less than 
or equal to the number $V-E$ of non-bald nodes. Therefore, the relations 
$\mathit\Gamma-L = V-B-L \leqslant V-E$ hold and from that we get a bound 
for the number of bald bunches
\begin{equation}\label{eq:L}
L \geqslant E-B.
\end{equation}

For two different leaves $x$ and $y$ in the leaf set of a bunch~$G$ in the graph~$G(T)$, 
we define an isotopy~$\psi^{x,y}$ of the decomposition $S$ in the following way. 
For any edge of the chain $P$ connecting leaves $x$ and $y$ in the tree~$T$ we take 
the involution $\xi=\xi(G)$ native to the nodes of the bunch $G$. Each bridge node 
in the chain $P$ does not belong to $G$, but is adjacent to two nodes of the bunch $G$ 
and one leaf $z$ of the tree $T$. If a bridge $v$ is labeled by~$f$, then for the leaf 
edge of $v$ we take a native transposition $\tau=\tau(z)$ of the bridge $v$ such 
that the three permutations $\xi$, $\xi$, $\tau$ in an appropriate order form an autotopy 
of the binary quasigroup~$f$. Such a transposition exists by Lemma~\ref{l:aut3}(iv). 
Finally, we take the identity permutation for the remaining edges of the tree~$T$.

\begin{lemma}\label{l:path}
For any two leaves $x$ and $y$ from the leaf set of a bunch in $G(T)$, 
the isotopy $\psi^{x,y}$ is an autotopy of the decomposition $S$.
\end{lemma}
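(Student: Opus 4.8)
The plan is to read $\psi^{x,y}(S)=S$ off the local action~\eqref{eq:isot_i}: replacing each node label $f_i$ by $f_i'(x_1,\ldots,x_{n_i})=\theta_{e_0}^{-1}f_i(\theta_{e_1}x_1,\ldots,\theta_{e_{n_i}}x_{n_i})$ leaves the whole decomposition tree unchanged precisely when, at every node, the tuple of permutations carried by the edges incident to that node is an autotopy of the quasigroup labelling the node. So the verification is entirely local and node by node. Since $\psi^{x,y}$ is the identity on every edge except those of the chain $P$ and the leaf edges of the bridges lying on $P$, a node can receive a non-identity permutation only if it lies on $P$; at every node off $P$ all incident edges carry the identity and the autotopy condition is vacuous.

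First I would fix the combinatorial shape of $P$. As $x$ and $y$ lie in the leaf set of the bunch $G$, each is adjacent to a node of $G$; call these $u_0$ and $u_m$. Since $u_0$ and $u_m$ lie in one connected component of $G(T)$, they are joined by a path in $G(T)$ whose edges are furnished by bridges of $T$ connecting consecutive nodes of $G$; lifting it to the tree, the chain reads
$$x,\;u_0,\;v_1,\;u_1,\;\ldots,\;v_m,\;u_m,\;y,$$
an alternation of the nodes $u_0,\ldots,u_m$ of $G$ with the bridges $v_1,\ldots,v_m$. Being bridges that provide edges of $G$, the $v_j$ do not belong to $G$, and since each $v_j$ is adjacent in $T$ to two nodes of $G$ it has parity opposite to that of the nodes of $G$. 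In the reduced, bipartite tree this means all nodes of $G$ carry one semilinearity type with a common native involution $\xi=\xi(G)$, whereas each $v_j$ carries the other type, for which $\xi$ is a \emph{foreign} involution.

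The local checks now split in two. Each $u_j$ is an interior vertex of the path $P$, so exactly two of its incident edges lie on $P$ and carry $\xi$, while the rest carry the identity; as $\xi$ is native to $u_j$ in every argument, Lemma~\ref{l:2rodn}(i) says that two native involutions together with identities form an autotopy of the quasigroup at $u_j$. Each bridge $v_j$ has degree $3$ and hence labels a binary quasigroup; its two chain edges carry $\xi$ and its leaf edge carries $\tau=\tau(z_j)$, which the construction of $\psi^{x,y}$ picks exactly so that $\xi,\xi,\tau$ form an autotopy of $v_j$ in the order prescribed by the edge enumeration. This last fact is Lemma~\ref{l:aut3}(iv), applicable because $\xi$ is a foreign involution and $\tau$ a native transposition of $v_j$.

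Verifying the condition at every node of $P$, and trivially at every other node, yields $\psi^{x,y}(S)=S$, i.e.\ $\psi^{x,y}$ is an autotopy of $S$. The one point I would treat with care --- and the real content of the argument --- is the double role of $\xi$: it must be the common \emph{native} involution at the nodes of $G$, so that Lemma~\ref{l:2rodn}(i) applies there, and simultaneously a \emph{foreign} involution at each intervening bridge, so that Lemma~\ref{l:aut3}(iv) applies. This dichotomy is exactly what the opposite parities of the bunch nodes and of the bridges guarantee in the reduced decomposition.
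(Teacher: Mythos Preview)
Your proof is correct and follows essentially the same approach as the paper: a node-by-node verification splitting the chain $P$ into the nodes of $G$ (handled via Lemma~\ref{l:2rodn}(i)) and the bridges (handled via Lemma~\ref{l:aut3}(iv)), with the off-$P$ nodes trivial. Your explicit discussion of the alternating structure of $P$ and the parity argument explaining why $\xi$ is native at the $u_j$ but foreign at the $v_j$ is slightly more detailed than the paper's version, but the underlying argument is identical.
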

\begin{proof}
Consider a bunch $G$ and any two leaves $x$ and $y$ from the leaf set 
of $G$. The nodes of the chain $P$ connecting $x$ and $y$ in the tree~$T$ can be partitioned 
into two parts. The first part consists of the nodes of the bunch $G$. If $u\in{P}$ is a node 
of $G$, then by construction the isotopy $\psi^{x,y}$ contains the involution $\xi=\xi(G)$ 
native to $u$ for each of the two edges incident to~$u$ in the chain~$P$ and identity 
permutations for all other edges incident to $u$ in the tree~$T$. 
By Lemma~\ref{l:2rodn},
such a collection of permutations forms an~autotopy 
of the quasigroup prescribed to the vertex $u$.

The second part of nodes in the chain~$P$ consists of bridges, 
which do not belong to the bunch $G$, 
but provide the edges of $G$. 
Let $v$ be such a bridge in the tree~$T$ and $z$ be the only leaf of $v$.
By construction, the isotopy $\psi^{x,y}$ 
contains two non-native to $v$ involutions $\xi(G)$ 
and a native to $v$ transposition 
$\tau(z)$ that form an~autotopy of the quasigroup prescribed to the vertex $v$.

For each of the nodes not in the chain $P$, 
the isotopy $\psi^{x,y}$ induces the identity autotopy. 
From these arguments, 
we conclude that for each node in the tree~$T$, 
the isotopy $\psi^{x,y}$ yields an autotopy 
of the quasigroup prescribed to this node. 
Consistently, 
$\psi^{x,y}$ is an autotopy of the decomposition $S$.
\end{proof}

Let us note that a bald bunch, 
as well as a bunch with only one leaf,
do not grant any autotopies of the kind $\psi^{x,y}$.

\begin{lemma}\label{l:V-B}
If a bunch $G$ contains $k\geqslant 1$ leaves in its leaf set, 
then there exist 
at least $2^{k-1}$ autotopies of the decomposition $S$ 
acting in the following way:
on the edges incident to leaves of $G$, 
they act with identity permutations or involutions 
native to the nodes of~$G$;
on the edges that are incident to the leaves of the 
bridges connecting the nodes of~$G$, 
they act with identity permutations 
or transpositions native to the bridges.
\end{lemma}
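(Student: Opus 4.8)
The plan is to assemble the required autotopies out of the ones supplied by Lemma~\ref{l:path} and then count them by means of a linear‑algebra argument over $\mathrm{GF}(2)$. First I would enumerate the leaf set of the bunch $G$ as $x_1,\ldots,x_k$ and invoke Lemma~\ref{l:path} to obtain the $k-1$ autotopies $\psi^{x_1,x_2},\ldots,\psi^{x_1,x_k}$ of the decomposition $S$. I would then let $H$ be the subgroup of $\mathrm{Atp}(S)$ that they generate and show, on the one hand, that every element of $H$ has the form prescribed in the statement, and, on the other hand, that $|H|\geqslant 2^{k-1}$. For $k=1$ there are no generators, $H$ is trivial, and the bound $2^{0}=1$ holds with the identity autotopy, so I may assume $k\geqslant 2$.

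Recall that composition of isotopies of a decomposition is carried out edgewise, so that $\mathrm{Atp}(S)$ embeds into the direct product of copies of $S_4$ indexed by the edges of $T$; hence on each fixed edge the permutation carried by a product is the product of the permutations carried by the factors. By the construction behind Lemma~\ref{l:path}, the generator $\psi^{x_1,x_i}$ carries the native involution $\xi=\xi(G)$ on exactly the two leaf edges $x_1,x_i$ of the leaf set it touches (and the identity on the other leaf edges of the leaf set), while on a leaf edge of a bridge $v$ it carries either the identity or a native transposition $\tau(z)$ of $v$. The crucial point I would stress is that this transposition is independent of the chain: both $G$‑edges of a bridge $v$ receive the \emph{same} foreign involution $\xi$, and by Lemma~\ref{l:aut3}(iv) a prescribed (ordered) pair of foreign involutions determines the accompanying native transposition uniquely. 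Consequently, on every leaf edge of the leaf set each element of $H$ acts by a power of the single involution $\xi$, hence by $\mathrm{Id}$ or $\xi$, and on every bridge leaf edge it acts by a power of the single transposition $\tau(z)$, hence by $\mathrm{Id}$ or $\tau(z)$. Thus every element of $H$ indeed has the form required by the lemma.

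To count $|H|$ I would introduce the map $\Phi\colon H\to\mathrm{GF}(2)^k$ recording, for each leaf $x_1,\ldots,x_k$ of the leaf set, whether the autotopy acts on the corresponding leaf edge by $\xi$ (value $1$) or by $\mathrm{Id}$ (value $0$). The previous paragraph shows that $\Phi$ is well defined, and since the action is edgewise and the two permutations $\mathrm{Id},\xi$ form a group on each such edge, $\Phi$ is a homomorphism into the additive group $\mathrm{GF}(2)^k$. Writing $e_1,\ldots,e_k$ for the standard basis vectors, the generator $\psi^{x_1,x_i}$ maps to $e_1+e_i$, so the image of $\Phi$ contains the span of $e_1+e_2,\ldots,e_1+e_k$, which is the even‑weight subspace of $\mathrm{GF}(2)^k$ of dimension $k-1$. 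Therefore $\Phi$ attains at least $2^{k-1}$ distinct values; distinct values force distinct autotopies, so $H$ contains at least $2^{k-1}$ autotopies, all of the prescribed form.

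The step that needs the most care is the verification that $H$ remains inside the prescribed family under composition — specifically the chain‑independence of the bridge transpositions, which rests on the uniqueness clause of Lemma~\ref{l:aut3}(iv) together with the observation that the two $G$‑edges of a bridge always carry the same involution $\xi(G)$. Once this is settled, the passage to $\mathrm{GF}(2)^k$ and the dimension count of the even‑weight subspace are routine and yield the bound $2^{k-1}$.
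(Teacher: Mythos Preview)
Your proof is correct and follows essentially the same approach as the paper: both take the $k-1$ autotopies $\psi^{x_1,x_i}$ from Lemma~\ref{l:path} and argue that they generate $2^{k-1}$ distinct autotopies of the prescribed form, with independence witnessed by the action on the leaf edges $x_2,\ldots,x_k$. Your treatment is more explicit in two places---the closure of the prescribed family under composition (via the chain-independence of $\tau(z)$, which the paper leaves implicit in the notation) and the recasting of the independence argument as a dimension count in $\mathrm{GF}(2)^k$---but the underlying idea is the same.
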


\begin{proof}
Let $\{x,y_1,\ldots,y_{k-1}\}$ be the leaf set of the bunch $G$. 
Autotopies $\psi^{x,y_i}$, 
$i=1,\ldots,k-1$, of order $2$ each commute with each other 
and are independent from each 
other since for each $i$ only one of them, 
namely $\psi^{x,y_i}$ obtains a non-identity 
permutation for the edge incident to the leaf $y_i$. 
Therefore, these $k-1$ autotopies 
yield $2^{k-1}$ autotopies corresponding to the bunch $G$.
\end{proof}

In further, 
the autotopies of the decomposition $S$ described in Lemma~\ref{l:V-B} 
are called the \emph{autotopies induced by the bunch $G$}. 
All them are of order $2$.

If a bunch contains a fork, we can point out autotopies of order $4$, 
which contribute additionally 
to the size of the autotopy group of the corresponding quasigroup.

\begin{lemma}\label{l:F}
For each fork in a decomposition tree, 
one can find two autotopies of the decomposition 
acting on the leaf edges of the fork 
with its native cycles and on all other edges 
with identity permutations.
\end{lemma}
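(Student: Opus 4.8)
For each fork in a decomposition tree, one can find two autotopies of the decomposition acting on the leaf edges of the fork with its native cycles and on all other edges with identity permutations.

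=== PROOF PROPOSAL ===

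The plan is to localize the argument entirely at the fork and invoke the autotopy structure of a binary quasigroup from Lemma~\ref{l:aut3}(iii). A fork $u$ is a node of degree $3$ adjacent to exactly two leaves, say via the leaf edges $e_1$ and $e_2$, with the third (inner) edge $e_0$ connecting $u$ to the rest of the tree. Let $q$ be the binary quasigroup labeling $u$. First I would recall that by Lemma~\ref{l:aut3}(iii), for any ordered pair $(\varphi_1,\varphi_2)$ of native cycles of $q$, there is a uniquely determined permutation $\xi'\in\{\mathrm{Id},\xi\}$ such that $(\xi',\varphi_1,\varphi_2)$ is an autotopy of $q$; since $q$ has exactly two native cycles $(0bac)$ and $(0cab)$, there are precisely two such ordered pairs with $\varphi_1\ne\varphi_2$, giving two candidate triples.

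The key observation is that in both candidate autotopies of $q$ the permutation on the inner edge $e_0$ is $\xi'\in\{\mathrm{Id},\xi\}$, and I would argue that in fact $\xi'=\mathrm{Id}$ for both. This follows because the square of a native cycle is the native involution $\xi$, and composing the two autotopies of type (iii) (which are inverse to each other as a consequence of $(0bac)$ and $(0cab)$ being mutually inverse cycles) must yield the identity on $e_0$; combined with the group-theoretic constraint that these two autotopies are inverses, the value on $e_0$ is forced to be $\mathrm{Id}$. Thus each of the two autotopies of $q$ acts with $\mathrm{Id}$ on the inner edge $e_0$ and with a pair of native cycles on the leaf edges $e_1,e_2$.

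Having fixed the action at the fork, I would extend each to an isotopy $\theta$ of the whole decomposition $S$ by assigning $\mathrm{Id}$ to every edge other than $e_1$ and $e_2$. It remains to verify that $\theta(S)=S$, i.e. that $\theta$ is an autotopy of the decomposition. At the fork node $u$ the local triple on $(e_0,e_1,e_2)$ is an autotopy of $q$ by the above, so $u$ retains its label. For every other node $w$, all incident edges carry $\mathrm{Id}$ (since only the two leaf edges of the fork received nontrivial permutations, and these are not incident to any other node), so $w$ is fixed by formula~\eqref{eq:isot_i}. Hence $\theta$ is an autotopy of the decomposition.

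The main obstacle is the middle step: confirming that the inner-edge permutation $\xi'$ is $\mathrm{Id}$ rather than $\xi$, so that the isotopy genuinely restricts to the identity off the two leaf edges. If the direct group-theoretic argument proves delicate, the robust fallback is to compute on the canonical representative $+_4$ from Example~\ref{ex:1}, where the autotopy $(\mathrm{Id},(0123),(0321))$ of type (iii) is listed explicitly with $\mathrm{Id}$ on the $0$-th argument; since every fork label is $\{a,b\}$-semilinear and isotopic to $+_4$, Lemma~\ref{l:isemilinear} transports this to the general fork, confirming $\xi'=\mathrm{Id}$ and yielding the two desired autotopies.
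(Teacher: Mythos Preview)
Your main argument contains a genuine gap. You claim that for the two ``distinct-pair'' candidates $(\xi',c_1,c_2)$ and $(\xi'',c_2,c_1)$ the inner-edge permutation is forced to be $\mathrm{Id}$ because the two autotopies are mutual inverses. But this reasoning is vacuous: since $\xi',\xi''\in\{\mathrm{Id},\xi\}$ are involutions, the inverse relation only gives $\xi''=(\xi')^{-1}=\xi'$, and the composition being identity on $e_0$ merely says $(\xi')^2=\mathrm{Id}$, which holds for both $\xi'=\mathrm{Id}$ and $\xi'=\xi$. In fact the claim is false: take $q'(x,y)=x+_4(02)y$, which is still $\{0,2\}$-semilinear with native cycles $(0123)$ and $(0321)$. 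Conjugating the autotopy $(\mathrm{Id},(0123),(0321))$ of $+_4$ yields the autotopy $(\mathrm{Id},(0123),(0123))$ of $q'$; composing with $(\xi,\mathrm{Id},\xi)$ shows that the distinct-pair triple for $q'$ is $((02)(13),(0123),(0321))$, with $\xi'=\xi\ne\mathrm{Id}$. So fixing attention on the pairs with $\varphi_1\ne\varphi_2$ does not give the desired conclusion.

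The paper's proof avoids this by not trying to pin down $\xi'$ in advance. It takes any pair $(\varphi_1,\varphi_2)$ and, if the resulting $\xi'$ happens to equal $\xi$, corrects by composing with the autotopy $(\xi,\xi,\mathrm{Id})$ from Lemma~\ref{l:aut3}(i), obtaining $(\mathrm{Id},\varphi_1\xi,\varphi_2)$ where $\varphi_1\xi$ is again a native cycle. Your fallback via transport from $+_4$ can be made to work, but not as ``confirmation'' of the distinct-pair claim: the transported autotopy $(\mathrm{Id},\psi_1^{-1}(0123)\psi_1,\psi_2^{-1}(0321)\psi_2)$ does have $\mathrm{Id}$ on the inner edge and native cycles on the leaves, yet those cycles need not be distinct (as the $q'$ example shows). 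If you drop the insistence on $\varphi_1\ne\varphi_2$, the transport argument together with taking the inverse does yield the two required autotopies, which is essentially the paper's case $\xi'=\mathrm{Id}$ realized via a particular isotopy.
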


\begin{proof}
Consider a fork $u$ in a decomposition $S$ 
and let $\xi$ be the native involution of~$u$. 
Without loss of generality, 
we assume that the node adjacent to the fork $u$ 
is its $0$-th neighbor, 
while the leaves $x$ and $y$ of $u$ are the $1$-st and $2$-nd neighbors respectively. 
For each pair $\varphi_1$, $\varphi_2$ of cycles native to $u$, by~Lemma~\ref{l:aut3} 
there exists exactly one permutation $\xi'\in\{\mathrm{Id},\xi\}$ such that the triple 
$\varphi=(\xi',\varphi_1,\varphi_2)$ forms an autotopy of the quasigroup $f$ prescribed 
to the fork $u$.

If $\xi'=\mathrm{Id}$, then $\varphi$ and $\varphi^{-1}$ are required autotopies of $f$,  
which can be finished~up to~autotopies of the decomposition $S$ by use of identity 
permutations. If $\xi'=\xi$, then one can take the isotopy 
$(\xi',\varphi_1,\varphi_2)(\xi,\xi,\mathrm{Id})=(\mathrm{Id},\varphi_1\xi,\varphi_2)$ 
instead of $\varphi$. By~Lemma~\ref{l:aut3}, the~former is also an autotopy of 
the quasigroup $f$ with its native cycles $\varphi_1\xi$ and $\varphi_2$.
\end{proof}

\begin{lemma}\label{l:independence}
Non-identity autotopies of a decomposition induced by different bunches 
are independent and commute with each other.
\end{lemma}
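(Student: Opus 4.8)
The plan is to show that autotopies induced by different bunches have disjoint supports (in terms of the edges on which they act nontrivially), from which both independence and commutativity follow immediately. Recall from Lemma~\ref{l:V-B} that an autotopy induced by a bunch $G$ acts nontrivially only on two kinds of edges: the leaf edges incident to nodes of $G$ itself, and the leaf edges of the bridges that provide the edges of $G$. The key observation to establish is that these edge sets are pairwise disjoint across distinct bunches. First I would verify the claim for the leaf edges incident to nodes of the bunches: since distinct bunches are distinct connected components of the forest $G(T)$, they share no nodes, and since leaf edges incident to different nodes are clearly distinct, these portions of the supports cannot overlap.

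Next I would handle the bridge leaf edges. The subtle point is that a single bridge might a priori provide edges for two different bunches. However, by the structural remark in the text following the bunch definition, every bridge providing an edge of a bunch $G$ lies in the part of the bipartite tree $T$ opposite to the nodes of $G$. A bridge has degree $3$, with exactly one leaf and two non-leaf neighbors; both non-leaf neighbors are nodes of the same parity (opposite to the bridge), and by the definition of adjacency in $G(T)$ they are joined by an edge of $G(T)$, hence lie in the same bunch. Thus every bridge contributes its single leaf edge to exactly one bunch, namely the one containing its two node-neighbors. Therefore no bridge leaf edge is shared between autotopies of two different bunches.

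With disjointness of supports in hand, the conclusion is routine. If $\psi$ and $\psi'$ are nontrivial autotopies induced by bunches $G$ and $G'$ respectively, with $G\neq G'$, then on any edge $e$ at least one of $\psi_e$, $\psi'_e$ is the identity permutation, so $\psi_e\psi'_e=\psi'_e\psi_e$ and the two isotopies commute edge by edge. Independence likewise follows: a nontrivial product of induced autotopies from distinct bunches cannot be the identity, because each bunch contributes at least one edge on which its generators act nontrivially and no other bunch touches that edge, so the nonidentity action survives in the product.

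I expect the main obstacle to be the careful bookkeeping in the second paragraph, namely confirming that a bridge cannot simultaneously serve two different bunches. This rests entirely on the bipartite structure of $T$ and the precise definition of $G(T)$ (two nodes are adjacent in $G(T)$ exactly when they share a common bridge in $T$), so both non-leaf neighbors of any bridge automatically land in one and the same connected component. Once this is pinned down, the disjoint-support argument makes commutativity and independence immediate, with no computation required.
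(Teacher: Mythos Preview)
Your argument has a genuine gap: the supports of autotopies induced by different bunches are \emph{not} disjoint, so the disjoint-support shortcut does not apply.

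The misstep is in your reading of Lemma~\ref{l:V-B}. That lemma describes how the induced autotopies act on the leaf edges of $G$ and on the leaf edges of the bridges joining nodes of $G$, but it does not say the action is trivial elsewhere. If you go back to the construction of $\psi^{x,y}$, the native involution $\xi(G)$ is placed on \emph{every} edge of the chain $P$ from $x$ to $y$, and that chain contains inner edges---in particular, both inner edges of any bridge $v$ lying on $P$. So the support of $\psi^{x,y}$ includes all three edges incident to such a bridge $v$, not just its leaf edge.

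Now observe that this bridge $v$ is itself a node of some other bunch $G'$ (this is exactly the structural remark you quote). The leaf $z$ of $v$ belongs to the leaf set of $G'$, and an autotopy $\psi^{x',y'}$ induced by $G'$ whose chain passes through $v$ will act nontrivially on the leaf edge of $v$ (with the involution $\xi'$ native to $v$) and on at least one inner edge of $v$ (also with $\xi'$). Hence the edges incident to $v$ lie in the supports of autotopies coming from \emph{both} $G$ and $G'$. Your second paragraph correctly shows that the two node-neighbours of a bridge lie in a single bunch, but that is a different statement; it does not prevent the bridge's own edges from being touched by two bunches.

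To repair the proof you must analyse the overlap at each such bridge. On the inner edges of $v$, the autotopy from $G$ acts by $\xi(G)$ and the one from $G'$ by $\xi(G')$; these are distinct elements of the Klein four-group $\{\mathrm{Id},(01)(23),(02)(13),(03)(12)\}$ and therefore commute. On the leaf edge of $v$, the autotopy from $G$ acts by a transposition $\tau$ native to $v$ (from Lemma~\ref{l:aut3}(iv)), while the autotopy from $G'$ acts by the native involution $\xi'=\xi(G')$ of $v$; a transposition $(ab)$ or $(cd)$ commutes with $(ab)(cd)$. This gives commutativity. Independence also needs an argument on this leaf edge: the transposition $\tau$ alone never equals the native involution $\xi'$ or the identity, so contributions from $G$ and $G'$ cannot cancel there. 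This edge-by-edge check is precisely what the paper carries out.
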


\begin{proof}
Consider a decomposition $S$ with the tree $T$ and two autotopies of $S$. 
If the supports 
of the autotopies intersect in the empty set, 
then they are trivially independent and commute 
with each other. 
At the same time, the supports of autotopies induced by different bunches 
of~$G(T)$ can intersect only in edges of bridge nodes. Indeed, 
according to Lemma~\ref{l:F}, 
the support of the autotopy corresponding to a fork 
does not exceed the set of leaf edges of the fork.
As while as the support of an autotopy~$\psi^{x,y}$ 
induced by a bunch $G$ can contain only edges incident to the nodes of $G$. 
If there are more than one node in~$G$, 
then some of the edges in the support of~$\psi^{x,y}$ are also 
inner edges of bridges connecting nodes of~$G$.

Thereby, it is sufficient to prove the lemma for autotopies induced by different bunches 
with supports intersecting in edges incident to bridge nodes.
An arbitrary autotopy induced by a bunch $G$ 
acts on edges in its support in the following way:
\begin{itemize}
 \item on edges incident to nodes of~$G$, it acts with involutions 
native to nodes of the bunch~$G$;
\item on leaf edges incident to bridges connecting nodes of~$G$,
it acts  with transpositions native to the bridges.
\end{itemize}
Assume that a bridge $v$ with a leaf $z$ connects two nodes of the bunch $G$;
and let $v$ be contained in another bunch $G'$. 
By~Lemma~\ref{l:aut3}(iv), all autotopies induced by~$G$ contain 
exactly one of the two transpositions native to $v$, namely $\tau=\tau(z)$. 
The transposition $\tau$ cannot generate the involution native to the bridge~$v$. 
Thus, the autotopies induced by the bunch~$G$ 
and the autotopies induced by the bunch~$G'$ 
are independent.

Consider autotopies $\theta=\psi^{x,y}$ and $\theta'=\psi^{x',y'}$ 
induced by the bunches~$G$ and~$G'$ respectively. 
Let the supports of these two autotopies contain edges incident 
to a bridge~$v$ in their intersection.
Autotopies~$\theta$ and~$\theta'$ act on inner 
edges of~$v$ with involutions $\xi=\xi(G)$ and~$\xi'=\xi'(G')$ 
native to $G$ and~$G'$ respectively. 
Since involutions $(01)(23)$, $(02)(31)$ and $(03)(12)$ commute with each other,
we have $\xi\xi'=\xi'\xi$.

The autotopy $\theta$ acts on the leaf edge of the bridge~$v$ with the transposition~$\tau$,
while the autotopy~$\theta'$ acts on this edge with some involution~$\eta'$
(or the identity permutation, which is considered trivially).
Both~$\tau$ and~$\eta'$ are native to~$v$.
It is obvious that, for example,
the involution $(01)(23)$ and the transpositions $(01)$ and $(23)$ forming it commute. 
The same is true for the involutions~$(02)(31)$ and~$(03)(12)$. Therefore, we get $\tau\eta'=\eta'\tau$.

From the reasoning above it is follows that the autotopies~$\theta$ and~$\theta'$ commute 
on every edge that is contained in the intersection of their supports. This proves 
commutativity of autotopies of the decomposition~$S$ induced by different bunches in its tree.
\end{proof}

\begin{theorem}\label{th:lb}
For an arbitrary $n$-ary quasigroup~$f$ of order~$4$, the following inequality holds:
\begin{equation}\label{eq:lb}
	|\mathrm{Atp}(f)|\geqslant2^{[n/2]+2}.
\end{equation}
If $n\geqslant5$, then this bound is sharp.
\end{theorem}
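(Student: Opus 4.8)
The plan is to produce inside $\mathrm{Atp}(f)$ an explicit subgroup of order $2^{\,n+1-\Gamma+L+F}$, to show by a counting argument on the decomposition tree that this exponent is never smaller than $[n/2]+2$, and finally to construct, for every $n\ge5$, a quasigroup meeting the bound. By Lemma~\ref{l:dobr} I may assume $f$ carries a reduced decomposition with tree $T$. If $T$ consists of a single node then $f$ is semilinear and Lemma~\ref{l:2rodn} already supplies a subgroup of order $2^{n+1}\ge 2^{[n/2]+2}$ (the even-weight native-involution patterns give $2^{n}$, and the all-native-transposition autotopy doubles this); so assume $V\ge2$. For a non-bald bunch $G$ with $k_G$ leaves in its leaf set, Lemma~\ref{l:V-B} yields $2^{k_G-1}$ induced autotopies. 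If $G$ carries $f_G$ forks, then each fork autotopy $\varphi$ of Lemma~\ref{l:F} acts by native cycles on the two leaf edges of the fork and trivially elsewhere, so $\varphi^2=\psi^{x,y}$ is exactly the bunch autotopy joining these two leaves; since $\varphi$ itself puts a $4$-cycle on a leaf edge, which no involutionary bunch autotopy does, the $f_G$ forks enlarge the bunch group by $f_G$ independent factors of $2$. Hence $G$ contributes $2^{\,k_G-1+f_G}$ autotopies, and by Lemma~\ref{l:independence} the contributions of distinct bunches are independent and commute. Using $\sum_G k_G=N=n+1$, $\sum_G 1=\Gamma-L$, and $\sum_G f_G=F$ over the $\Gamma-L$ non-bald bunches, and noting that these autotopies of the decomposition act differently on the leaf edges and so descend to distinct autotopies of $f$, I obtain
\[
|\mathrm{Atp}(f)|\ \ge\ 2^{\,(n+1)-(\Gamma-L)+F}\ =\ 2^{\,n+1-\Gamma+L+F}.
\]

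It remains to bound the exponent, i.e.\ to prove $\Gamma-L-F\le\lceil n/2\rceil-1$. From $\Gamma=V-B$ (\ref{eq:VGB}) and $L\ge\max(0,E-B)$ (which follows from (\ref{eq:L}) together with $L\ge0$) I get $\Gamma-L-F\le V-F-\max(E,B)$. Now every inner node has degree at least $3$, a bridge or a fork has degree exactly $3$, and (since for $V\ge2$ a degree-$3$ node cannot carry three leaves) any node that is neither bald, bridge, nor fork has degree at least $4$. Substituting these degrees into the tree identity $\sum_u(d_u-2)=N-2=n-1$ gives $2V\le n-1+E+B+F$, whence
\[
\Gamma-L-F\ \le\ \frac{n-1}{2}-\frac{F}{2}-\frac{|E-B|}{2}\ \le\ \frac{n-1}{2}.
\]
As the left-hand side is an integer, $\Gamma-L-F\le\lfloor(n-1)/2\rfloor=\lceil n/2\rceil-1$, and therefore $|\mathrm{Atp}(f)|\ge 2^{\,(n+1)-(\lceil n/2\rceil-1)}=2^{[n/2]+2}$.

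For sharpness when $n\ge5$ I build $f$ as a repetition-free chain of minimal nonlinear semilinear nodes of type $l_m^\bullet$, whose autotopy groups are pinned down by Lemmas~\ref{l:aut3} and~\ref{l:aut4}: for odd $n$ a chain of $(n-1)/2$ ternary copies of $l_3^\bullet$, and for even $n\ge6$ a chain of $n/2-1$ nodes, all ternary except one quaternary $l_4^\bullet$. Alternating the semilinear type along the chain (one node $\{0,1\}$-semilinear, the next $\{0,2\}$-semilinear, and so on) makes the decomposition proper and forces every tree node to be its own bunch, so $B=F=0$ and $\Gamma-L-F=V=\lceil n/2\rceil-1$, matching the lower bound. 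To evaluate $|\mathrm{Atp}(f)|$ exactly I iterate Lemma~\ref{lem:composition}: on each internal edge the two glued factors are semilinear of opposite type, so the permutations they may place on that edge lie in $\langle(01),(23)\rangle$ and $\langle(02),(13)\rangle$ respectively, two subgroups meeting only in $\mathrm{Id}$. The matching constraint $\pi_0=\tau_1$ then forces the identity on every internal edge, annihilating the ``extra'' autotopies of type Lemma~\ref{l:2rodn}(ii) and leaving precisely $2^{[n/2]+2}$ autotopies.

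I expect the combinatorial inequality $\Gamma-L-F\le\lceil n/2\rceil-1$ to be the crux: it has to extract the bound from (\ref{eq:VGB}), (\ref{eq:L}) and the per-node degree information via the tree identity, and the final half-unit is saved only by integrality. The other delicate point is the exact count in the sharpness step, where one must verify that the opposite-parity gluing really kills every autotopy beyond those already counted by the lower bound.
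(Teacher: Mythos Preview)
Your proof is correct and follows essentially the same approach as the paper: you build the same subgroup of order $2^{N-(\Gamma-L)+F}$ from Lemmas~\ref{l:V-B}, \ref{l:F}, \ref{l:independence}, bound the exponent via the tree degree identity (your $2V\le n-1+E+B+F$ combined with $\Gamma-L-F\le V-F-\max(E,B)$ is a repackaging of the paper's $N\ge 2V-V_3+2$ and $V_3\le F+2B+L$), and use the same alternating chain of $l_3^\bullet$/$l_4^\bullet$ isotopes for sharpness, with the same ``opposite semilinear types force identity on internal edges'' argument via Lemma~\ref{lem:composition}. Your explicit treatment of the single-node case and the slightly cleaner inequality chain $\Gamma-L-F\le\frac{n-1}{2}-\frac{F}{2}-\frac{|E-B|}{2}$ are cosmetic differences, not substantive ones.
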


\begin{proof}
Let the quasigroup~$f$ have a reduced decomposition $S$ with the tree $T$. 
For each bunch~$G$ with $k\geqslant1$ leaves, 
by~Lemma~\ref{l:V-B} 
one can construct $2^{k-1}$ autotopies of~$f$ which act on variables corresponding 
to leaves of the bunch~$G$ with permutations of order~$2$. Taking into account all bunches 
of the graph~$G(T)$ except the bald ones, 
by use of Lemma~\ref{l:independence} we get 
$2^{N-(\mathit\Gamma-L)}$ autotopies of~$f$.

In addition, for any fork~$v$ in the tree $T$, by~Lemma~\ref{l:F} there are $2$ autotopies 
of~$f$ which act on~variables corresponding to the leaves adjacent to~$v$ with cycles native 
to the fork~$v$. This contributes the factor~$2^F$ to the number of constructed here 
autotopies of~$f$. In this way, using~(\ref{eq:VGB}) we obtain
\begin{equation}\label{eq:NVBLF}
|\mathrm{Atp}(f)|\geqslant2^{N-(\mathit\Gamma-L)+F}=2^{N-V+B+L+F}.
\end{equation}

Suppose that in the decomposition tree~$T$ there are $t$ edges and $V_s$ vertices 
of degree~$s$, $s=0,1,2,\ldots$. By definition of a quasigroup decomposition and accordingly 
to notation stabilized above, it can be written $V_1=N$, $V_2=0$. Thus,
$$N+\sum_{s\geqslant3}sV_s=2t=2(N+V-1).$$
It follows that 
\begin{equation}\label{eq:no-s5}
N+2V-2=\sum_{s\geqslant3}sV_s=4\sum_{s\geqslant3}V_s+\sum_{s\geqslant5}(s-4)V_i-V_3\geqslant4V-V_3.
\end{equation} 
Consequently, the inequality~$N\geqslant2V-V_3+2$ holds.

In accordance with the number of adjacent leaves, the nodes of degree~$3$ in the tree~$T$ 
are partitioned into forks, bridge nodes, and bald nodes (there are no vertices of degree~$3$ 
with three adjacent leaves since $n>2$). 
Trivially, the number of bald nodes 
of degree~$3$ is not greater 
than the total number of bald nodes in~$T$. 
Taking into account~(\ref{eq:L}), we get
\begin{equation}\label{eq:lys3} V_3\leqslant F+B+E\leqslant F+2B+L,\end{equation} 
which allows to rewrite the estimate for $N$ in more detail:
$$N\geqslant2V-V_3+2\geqslant2V-F-2B-L+2.$$
Hence,
$$-V+B\geqslant-\frac{1}{2}(N+F+L)+1.$$
Applying this inequality to~(\ref{eq:NVBLF}), we derive
\begin{equation}\label{eq:no-lys}
 |\mathrm{Atp}(f)|\geqslant 2^{(N+F+L)/2+1}\geqslant 2^{N/2+1}=2^{(n+3)/2}
\end{equation}
Let us note that the second inequality in~(\ref{eq:no-lys}) is strict if and only if 
the decomposition tree contains a fork or bald node. Since~$|\mathrm{Atp}(f)|$ is 
an integer and the number of autotopies generated by those described in Lemmas~\ref{l:V-B} 
and~\ref{l:F} is a power of $2$, 
we have
\begin{equation}\label{eq:lowerest}
	|\mathrm{Atp}(f)|\geqslant2^{\lfloor n/2 \rfloor+2}.
\end{equation}

Further, let us show that the bound~(\ref{eq:lowerest}) is attainable. 
Consider the quasigroups~$l_3^\bullet$ and~$l_4^\bullet$ in Lemma~\ref{l:aut4}, which we denote 
here by~$f$ and~$h$ correspondingly, and the quasigroup 
$x_0=g(x_1,x_2,x_3)=\tau f (\tau x_1,\tau x_2,\tau x_3)$ with $\tau=(12)$, which is isotopic to 
the ternary quasigroup~$f$.

The quasigroups~$f$ and $g$ are $\{0,2\}$- and $\{0,1\}$-semilinear, respectively. 
Their autotopy groups are isomorphic to each other, namely one of them is conjugate 
with the other by the transposition~$\tau$. The permutation~$(01)(23)$ is a native involution 
for~$g$, as $(01)$ and~$(23)$ are native transpositions of~$g$.

Note that only the identity permutation~$\mathrm{Id}$ can be met in autotopies of both~$f$ and~$g$. 
The same is true for~$h$ and~$g$. Therefore, if~$f$ and~$g$ 
(or $h$ and $g$, respectively) are adjacent in a decomposition tree of some quasigroup~$q$, 
then by~Lemma~\ref{lem:composition} 
their autotopies can concatenate 
to an autotopy of the decomposition 
of~$q$ only by the identity permutation.

Reasoning in this way, it is easy to see that for odd $n\geqslant5$ the quasigroup~$q_n$ 
of arity~$n$ with a decomposition tree of kind
$$
\begin{tikzpicture}
\draw (0,0) node[nodd]{$f$} --
 +(75:1) node[leaf]{} +(0,0) --
 +(-105:1) node[leaf]{} +(0,0) --
 +(165:1) node[leaf]{} +(0,0) --
++(-15:2) node[nodd]{$g$} --
 +(-90:1) node[leaf]{} +(0,0) -- 
 +(90:1) node[leaf]{} +(0,0) -- 
++(0:1) --
++(0:1) node[nodd]{$\ldots$} --
++(0:1) --
++(0:1) node[nodd]{$f$}  --
 +(-90:1) node[leaf]{} +(0,0) -- 
 +(90:1) node[leaf]{} +(0,0) -- 
++(15:2) node[nodd]{$g$} --
 +(-75:1) node[leaf]{} +(0,0) --
 +(15:1) node[leaf]{} +(0,0) --
 +(105:1) node[leaf]{} +(0,0);
\end{tikzpicture}
$$
has the autotopy group of order~$2^{(n-1)/2+2}$. 
For even $n\geqslant6$, the quasigroup~$q_n$ 
of arity~$n$ with a decomposition tree of kind
$$
\begin{tikzpicture}
\draw (0,0) node[nodd]{$h$} --
 +(75:1) node[leaf]{} +(0,0) --
 +(-90:1) node[leaf]{} +(0,0) --
 +(150:1) node[leaf]{} +(0,0) --
 +(-150:1) node[leaf]{} +(0,0) --
++(0:2) node[nodd]{$g$} --
 +(-90:1) node[leaf]{} +(0,0) -- 
 +(90:1) node[leaf]{} +(0,0) -- 
++(-15:2) node[nodd]{$f$} --
 +(-90:1) node[leaf]{} +(0,0) -- 
 +(90:1) node[leaf]{} +(0,0) -- 
++(0:1) --
++(0:1) node[nodd]{$\ldots$} --
++(0:1) --
++(0:1) node[nodd]{$g$}  --
 +(-90:1) node[leaf]{} +(0,0) -- 
 +(90:1) node[leaf]{} +(0,0) -- 
++(15:2) node[nodd]{$f$} --
 +(-75:1) node[leaf]{} +(0,0) --
 +(15:1) node[leaf]{} +(0,0) --
 +(105:1) node[leaf]{} +(0,0);
\end{tikzpicture}
$$
has the autotopy group of order~$2^{n/2+2}$. 
In both cases, the equality is attained 
in~(\ref{eq:lowerest}) for the quasigroups designed.
\end{proof}

\begin{note} 
For $n=3$ and $n=4$, the bound pointed out in Theorem~\ref{th:lb} is not sharp. 
The quasigroup~$q_n$ described in the proof degenerates into a semilinear quasigroup, which has 
autotopies consisting of its native transpositions (see Lemma~\ref{l:aut4}(ii)). Such autotopies are not taken into account in the estimation of Theorem~\ref{th:lb}.

The quasigroup~$q_n$ also delivers 
the minimum for order of autotopy group in the case $n\in\{3,4\}$. 
However, in this case the minimum is two times greater than the minimum number in Theorem~\ref{th:lb}. 
At the same time,
any decomposition tree of a reducible quasigroup of arity~$n\in\{3,4\}$ contains 
a fork. If there are two forks, 
then the difference can be seen from inequalities in~(\ref{eq:no-lys}). 
If there is one fork, then by~Lemma~\ref{l:aut3}(iv) the quasigroup~$q_n$ 
has autotopies with 
non-identity involution acting on the inner edge of the fork,
which are not considered in the proof of Theorem~\ref{th:lb}.
\end{note}


\subsection{Quasigroups with autotopy groups of minimum order}
\label{s:min}

Besides the examples of quasigroups described in the proof of Theorem~\ref{th:lb}, there are 
many other such quasigroups for which the equality is attained in the lower bound given 
by the theorem. In this section, we characterize quasigroups with this property for $n$ odd. 
In our reasoning we examine the cases in which all non-strict inequalities occurring in the proof 
of the theorem turn into equalities. In case of even $n$, we have not got such an opportunity 
since we explicitly use the ceil function.

For odd $n$, the number in the right part of~\eqref{eq:lb} equals 
$2^{\lfloor n/2 \rfloor +2}=2^{\frac{n-1}{2}+2}=2^{\frac{n+3}{2}}$. 
Based on the proof of Theorem~\ref{th:lb}, for odd $n$ one can derive 
properties of reduced decompositions with exactly $2^{\frac{n+3}{2}}$ autotopies. 
The tree of such a decomposition does not contain:
\begin{asparaenum}[\hfill(I)]
    \item any vertices of degree greater than $4$ 
          (this follows from the equality in~(\ref{eq:no-s5})),
    \item any forks (the equality in~(\ref{eq:no-lys})),
    \item any bald bunches (the equality in~(\ref{eq:no-lys})), 
    \item greater than one non-bald vertex in each bunch (the equality in~(\ref{eq:L})),
    \item any bald vertices of degree greater than~$3$ (the equality in~(\ref{eq:lys3})).
\end{asparaenum}
Conditions (III)--(V) imply that in a decomposition with exactly $2^{\frac{n+3}{2}}$ autotopies 
\begin{asparaenum}
    \item[(III--V)] each bunch contains exactly one non-bald node, which can be a bridge or node 
	of degree~$4$, and possibly some bald nodes of degree~$3$ as well.
\end{asparaenum}

Let us take a reduced decomposition that satisfies the conditions~(I)--(V) and label each of its nodes 
of degree~$4$ with a ternary quasigroup isotopic to~$l_3^\bullet$. 
This  decomposition has only the autotopies considered in the proof of Theorem~\ref{th:lb},
and the corresponding quasigroup meets the bound $2^{\frac{n+3}{2}}$ for the order of the autotopy group. 
On the other side, the following statement takes place.

\begin{lemma}\label{l:no-z4z4-z2z4}
Let $S$ be a reduced decomposition of an $n$-ary quasigroup $f$ of order $4$ with 
$|\mathrm{Atp}(f)|=2^{\frac{n+3}{2}}$. Each node of degree~$4$ in the decomposition $S$ has 
a ternary quasigroup isotopic to the quasigroup $l_3^\bullet$ as its label.
\end{lemma}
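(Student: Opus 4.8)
The plan is to show that a degree-$4$ node cannot carry any ternary quasigroup \emph{except} one isotopic to $l_3^\bullet$, by comparing the autotopy group such a node would contribute with the rigid count $2^{(n+3)/2}$ forced by the extremal decomposition. Recall from Lemma~\ref{lem:composition} that the autotopy group of the whole decomposition factors through the $\dot\otimes$-product over the nodes, and that an autotopy of the decomposition restricts to an autotopy of each node's quasigroup. In the extremal decomposition we know (conditions (I)--(V)) that each bunch has exactly one non-bald node, every involved quasigroup is $\{0,1\}$- or $\{0,2\}$-semilinear but not linear, and the counting in Theorem~\ref{th:lb} is \emph{tight}, i.e. every non-strict inequality used there is an equality and every autotopy of $S$ is one of those built in Lemmas~\ref{l:V-B} and~\ref{l:F}. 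The key point is that these constructed autotopies act on each node only via \emph{native involutions} on the edges (Lemma~\ref{l:2rodn}(i), Lemma~\ref{l:V-B}) and via \emph{native transpositions} on the leaf edges of bridges. In particular, no native cycle and no foreign involution ever appears at a degree-$4$ node that is \emph{not} a fork (and by condition~(II) there are no forks).

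First I would fix a node $u$ of degree $4$ in $S$, labelled by a ternary quasigroup $g$ that is $\{0,a\}$-semilinear but not linear, and determine which autotopies of $g$ actually occur as restrictions of autotopies of $S$. Since $u$ is not a fork, all three of its non-$0$ edges are either inner edges toward other bunches or leaf edges of bridges; in either case the tight count permits only the order-$2$ autotopies of type Lemma~\ref{l:2rodn}(i)--(ii) at $u$. Concretely, the autotopies induced by the bunches acting at $u$ form the subgroup generated by the native involutions of $u$ placed on pairs of its incident edges (together with the sign-parity transposition autotopies of Lemma~\ref{l:2rodn}(ii) along leaf edges of its bridges). I would count this subgroup precisely and verify that it has exactly the order needed so that the global product reaches $2^{(n+3)/2}$ and no more.

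The main obstacle—and the crux of the argument—is to show that a degree-$4$ node whose ternary quasigroup is \emph{not} isotopic to $l_3^\bullet$ necessarily admits \emph{strictly more} autotopies of the permitted native-involution/native-transposition type than $l_3^\bullet$ does, forcing $|\mathrm{Atp}(f)|>2^{(n+3)/2}$ and contradicting the hypothesis. Here I would invoke the classification spirit of Theorem~\ref{th:top}: an irreducible $\{0,a\}$-semilinear ternary quasigroup of order $4$ is, up to isotopy, quite constrained, and the quasigroup $l_3^\bullet$ is characterized in Lemma~\ref{l:aut4} as having autotopy group of minimal order $2^{4}=2^{n+1}$ (for $n=3$) generated exactly by the native involutions of Lemma~\ref{l:2rodn}. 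Any other candidate label $g$—for instance one with $g(\{0,a\}^3)=\Sigma\setminus\{0,a\}$ behaving differently on the $2\times2\times2$ subblocks, or a reducible ternary quasigroup that would violate properness—either carries additional order-$2$ autotopies (an extra native involution fixing the right edges, detected by the affine-subspace/closest-code argument of Lemma~\ref{l:aut4}) or fails to be admissible at a degree-$4$ node at all. Thus $l_3^\bullet$ is pinned down as the unique minimizer.

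To finish, I would assemble these observations: fixing all other nodes and a single degree-$4$ node $u$, Lemma~\ref{lem:composition} shows the total autotopy count is multiplied by the size of the native-type autotopy subgroup contributed at $u$. Since the extremal hypothesis forces this multiplier to be exactly that of $l_3^\bullet$ and since every non-isotopic alternative strictly increases it, $u$ must be labelled by a ternary quasigroup isotopic to $l_3^\bullet$. Applying this to each degree-$4$ node independently (their contributions concatenate only through identity permutations, as in the proof of Theorem~\ref{th:lb}) yields the claim for all such nodes simultaneously.
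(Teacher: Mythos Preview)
Your overall direction—show that any label other than $l_3^\bullet$ at a degree-$4$ node produces extra autotopies and hence a contradiction—matches the paper's, but the argument as written has a genuine gap and a conceptual error.

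First, you never identify the candidates. Up to isotopy, variable permutation, and inversion there are exactly three semilinear non-linear ternary quasigroups: $l_3^\bullet$, $g(x_1,x_2,x_3)=x_1\oplus(x_2+_4 x_3)$, and $h(x_1,x_2,x_3)=x_1+_4 x_2+_4 x_3$. Without this classification your case analysis is floating; the vague description ``one with $g(\{0,a\}^3)=\Sigma\setminus\{0,a\}$ behaving differently'' does not pin anything down.

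Second, and more importantly, you have misidentified what distinguishes $g$ and $h$ from $l_3^\bullet$. You claim the alternatives carry ``additional order-$2$ autotopies (an extra \emph{native} involution\ldots)''. They do not: all three quasigroups share the same subgroup generated by Lemma~\ref{l:2rodn}. What $g$ and $h$ have, and $l_3^\bullet$ lacks, is an autotopy built from \emph{foreign} involutions—for $g$ the autotopy $((01)(23),(01)(23),\mathrm{Id},\mathrm{Id})$, for $h$ the autotopy $((01)(23),(01)(23),(01)(23),(01)(23))$, where $(01)(23)$ is non-native to a $\{0,2\}$-semilinear node. Your counting of ``native-type'' contributions therefore cannot separate $l_3^\bullet$ from the others.

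Third, even granting the existence of a local foreign-involution autotopy at the node $\alpha$, you still owe the step that actually produces the contradiction: extending it to an autotopy of the \emph{entire} decomposition $S$. This is nontrivial and is the heart of the paper's proof. The foreign involution $(01)(23)$ on an edge of $\alpha$ is exactly the \emph{native} involution for the adjacent bunch $G$ (since the parities alternate in a reduced decomposition); one then runs a chain through $G$ to a leaf—guaranteed to exist by condition (III)—placing $(01)(23)$ on the chain edges and native transpositions on the bridge leaves via Lemma~\ref{l:aut3}(iv). This yields a global autotopy not counted in Theorem~\ref{th:lb}, forcing $|\mathrm{Atp}(f)|>2^{(n+3)/2}$. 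Your proposal gestures at Lemma~\ref{lem:composition} and a ``multiplier'' but never builds this extension, and your assertion that ``every autotopy of $S$ is one of those built in Lemmas~\ref{l:V-B} and~\ref{l:F}'' is precisely what must be proved, not assumed.
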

\begin{proof}
There exist exactly five ternary quasigroups up to isotopy, variable permutation, 
and inversion~\cite{ZZ:2002}. One of them is linear and another one is non-semilinear. 
The remaining three quasigroups are semilinear, but not linear. 
These three are the quasigroups $l_3^\bullet$,
$$
	g(x_1,x_2,x_3) = x_1\oplus (x_2 +_4 x_3),\qquad \mbox{and}\quad h(x_1,x_2,x_3) = x_1 +_4 x_2 +_4 x_3.
$$
The quasigroup~$g$ admits the autotopy $\varphi=((01)(23),(01)(23),\mathrm{Id},\mathrm{Id})$ 
with two involutions non-native to~$g$. The quasigroup~$h$ has got the autotopy 
$((01)(23),(01)(23),(01)(23),(01)(23))$ consisting of four non-native involutions. 

Suppose that the tree~$T$ of a decomposition~$S$ has a node $\alpha$ labeled with~$g$
(the case of~$h$ can be handled similarly). 
We will show that under the assumptions 
made one can obtain $|\mathrm{Atp}(f)|>2^{\frac{n+3}{2}}$. 
With this aim, for the decomposition~$S$ 
we construct a special autotopy consisting of permutations in the transformation~$\varphi$. 
It~helps us to get the inequality. 

Consider the $0$-th neighbor of the node~$\alpha$. 
If it is a leaf, then we prescribe 
the permutation~$(01)(23)$ to the leaf edge. 
If the neighbor is a node, 
it belongs to some bunch~$G$. 
According to~(III), the bunch~$G$ is not bald and contains at least one leaf~$x$.
Assume that a chain~$P$ connects the leaf~$x$ with the node~$\alpha$ in the tree~$T$. Prescribe 
the permutation~$(01)(23)$ to each edge of~$P$. If there is any bridge node in the chain~$P$, 
which connects two nodes of the bunch~$G$, we prescribe a native transposition to the leaf edge of 
the bridge accordingly to Lemma~\ref{l:aut3}(iv). Next, we do the same construction for the $1$-st 
neighbor of the node~$\alpha$ and prescribe the identity permutation to the remaining edges 
in the tree $T$. 

Finally, we obtain an autotopy~$\theta$ of the decomposition~$S$, because for each node~$v$ in~$T$ 
the permutations acting on the edges incident to~$v$ form an autotopy of the quasigroup 
corresponding to $v$. In addition, the autotopy~$\theta$ does not contribute to the number 
given as a lower bound in Theorem~\ref{th:lb}. Indeed, in the proof of the theorem we consider 
only those autotopies of the decomposition which act on every edge incident to a node of degree~$4$ 
with an involution native to the node. In contrast, the autotopy~$\theta$ acts on two edges 
incident to~$\alpha$ with the involution~$(01)(23)$, which is not native to~$g$. Consequently, 
$\theta$ increases the order of the autotopy group of~$f$; so, 
$|\mathrm{Atp}(f)|>2^{\frac{n+3}{2}}$.

If the quasigroups~$h$ is prescribed to the node~$\alpha$, 
then one can construct an additional autotopy 
of~$f$ in the same way. 
The only difference is that in this case all neighbors 
of the node~$\alpha$ should be considered.
\end{proof}

A decomposition tree satisfying conditions~(I)--(V) 
can be constructed using the following procedure.

\medskip\noindent
\textbf{Construction T}\par
\smallskip
\begin{asparaenum}[{\textsc{Step}}~1.]\itemsep=\smallskipamount
	\item Take an arbitrary tree $T_1$ with exactly $(n-1)/2$ vertices, which we call 
	\emph{nodes}. The degree of each node should not exceed $3$.
	\item Connect $4-i$ new \emph{leaves} to each node of degree~$i\in\{1,2,3\}$ in the tree~$T_1$. 
	Degree of each node in the resulting tree~$T_2$ equals $4$.
	\item Select some (maybe none, maybe all) nodes adjacent to exactly one leaf. 
	Replace each selected node~$s$ by two new nodes~$u_s$ and~$v_s$ of degree~$3$ adjacent 
	to each other. Four neighbors of~$v$ can be distributed among the neighborhoods of~$u_s$ and~$v_s$ 
	in any of three possible ways. Denote the tree obtained at this step by~$T_3$.
	\item Divide the nodes of~$T_3$ into two independent parts~$V_1$ and~$V_2$, which is possible because 
	any tree is a bipartite graph.
	\item To each node in the part~$V_i$, $i=1,2$,
	assign a $\{0,i\}$-semilinear quasigroup isotopic to~$+_4$ or~$l_3^\bullet$ 
	\item Finally, choose a leaf to represent the value of the quasigroup, index the neighbors 
	of each node in an appropriate way and get a decomposition tree $T$ of some quasigroup~$f$.
\end{asparaenum}

\medskip
Moreover, it turns out that, for every quasigroup~$f$ which meets the bound on the order of its autotopy group, one can build a quasigroup isotopic to~$f$ using Construction~T.

\begin{theorem}\label{th:min-all}
Every $n$-ary quasigroup~$f$ with the autotopy group of order~$2^{\frac{n+3}2}$ is isotopic 
to some quasigroup with a decomposition tree obtained with Construction T.
\end{theorem}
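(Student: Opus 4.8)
The plan is to establish the theorem as a converse to the discussion preceding it, showing that the necessary conditions (I)--(V), together with Lemma~\ref{l:no-z4z4-z2z4}, already force the decomposition to be (isotopic to) one produced by Construction~T. First I would take a quasigroup $f$ with $|\mathrm{Atp}(f)|=2^{\frac{n+3}2}$ and fix a reduced decomposition $S$ of $f$, which exists by Lemma~\ref{l:dobr}. Since the autotopy count is minimal, every non-strict inequality in the proof of Theorem~\ref{th:lb} must be an equality, so the tree $T=T_0(S)$ satisfies (I)--(V). In particular, (I) says $T$ has no vertex of degree exceeding $4$, and combining (II)--(V) tells us each bunch has exactly one non-bald node, which is either a bridge or a node of degree $4$, possibly accompanied by bald nodes of degree $3$.

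The core of the argument is to reverse-engineer the three construction steps from these structural constraints. The plan is to \emph{contract} $T$ back to the skeleton tree $T_1$: I would argue that forgetting the leaves and then, for each bridge created in Step~3, re-merging the two degree-$3$ nodes $u_s,v_s$ back into a single degree-$4$ node recovers a tree on $(n-1)/2$ vertices of maximum degree $3$. The count $(n-1)/2$ follows from the edge/vertex bookkeeping already developed: with $n$ odd, $N=n+1$ leaves, and every node of the contracted tree having degree $4$ (by Step~2's effect), a leaf-counting identity forces exactly $(n-1)/2$ skeleton vertices. Conversely, Step~3 is exactly the local replacement that creates a bridge, so every bridge in $T$ corresponds to a legitimate Step~3 split. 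Then I would invoke Lemma~\ref{l:no-z4z4-z2z4} to guarantee that each degree-$4$ node carries a label isotopic to $l_3^\bullet$, matching the choices permitted in Step~5, and Lemma~\ref{l:aut3} (via the analysis in the minimality discussion) to handle the bridge labels, which are binary quasigroups isotopic to $+_4$.

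Next I would address the isotopy. Since Construction~T assigns specifically $\{0,i\}$-semilinear quasigroups according to the bipartition $V_1,V_2$ (Steps~4--5), whereas an arbitrary reduced decomposition of $f$ uses $\{0,1\}$- or $\{0,2\}$-semilinear quasigroups in some pattern, I would apply the same edge-by-edge isotopy technique as in the proof of Lemma~\ref{l:dobr}: choose permutations $\theta_e$ on the edges of $T$ so that odd nodes become $\{0,1\}$-semilinear and even nodes become $\{0,2\}$-semilinear. By Lemma~\ref{l:isotop}, this produces a quasigroup isotopic to $f$ whose decomposition matches the output of Construction~T, with Step~6 absorbing the choice of root leaf and the indexing of neighbors. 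Because isotopy preserves $|\mathrm{Atp}|$, the isotopic quasigroup also attains the bound, so it is a legitimate Construction~T quasigroup with the correct labels.

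The main obstacle I anticipate is the clean matching between the abstract structural conditions (I)--(V) and the concrete sequential steps of Construction~T, particularly showing that Step~3's splitting accounts for \emph{exactly} the bridges and nothing more, and that no degree-$4$ node of $T$ is spuriously created outside Step~2. One must be careful that bald nodes of degree $3$ permitted by (III--V) are correctly generated: these arise when Step~1's skeleton tree $T_1$ has internal vertices all of whose Step-2 leaves are later consumed, so I would verify that the interplay of Steps~1--3 can realize any admissible configuration of bald degree-$3$ nodes and bridges simultaneously. A secondary subtlety is confirming that the label choices forced by Lemma~\ref{l:no-z4z4-z2z4} and Lemma~\ref{l:aut3} are exactly those allowed in Step~5, with no additional isotopy-class obstructions; this is where I expect the argument to require the most care, though it should follow from the five-quasigroup classification already cited.
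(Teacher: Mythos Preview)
Your approach is essentially the paper's: exploit (I)--(V) to pair each bridge with an adjacent bald degree-$3$ node, contract these pairs to reverse Step~3, observe that the resulting tree has all nodes of degree~$4$ and non-bald (hence is a Step-2 tree), and invoke Lemma~\ref{l:no-z4z4-z2z4} for the labels. One small correction to your anticipated obstacle: bald nodes are \emph{not} produced by ``consuming'' leaves---Step~3 never removes leaves; each split of a one-leaf degree-$4$ node yields exactly one bridge and one adjacent bald node, so the bridge--bald bijection is immediate (the paper obtains the adjacency by rooting each bunch at its unique non-bald vertex), and your concern about spurious degree-$4$ nodes disappears once you see this.
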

\begin{proof}
Let~$T$ be a decomposition tree built using the construction. Nodes of degree~$3$ are combined 
in pairs ``bald node -- bridge node'' input at Step 3. 
In each pair, the bald node is included 
into some bunch, while the bridge node corresponds to an edge of that bunch. 
Since a bunch is 
a tree and the number of vertices in a tree is one more than the number of edges, 
every bunch contains exactly one non-bald node. 
By construction, it is a bridge node or node of degree~$4$. 
Therefore, the tree~$T$ satisfies conditions~(I)--(V).

On the other hand, consider a quasigroups~$f$ that meets the bound~$2^{\frac{n+3}2}$ 
on the order of the autotopy group. 
Let~$T$ be the tree of a decomposition of~$f$. 
From condition~(III--V), it follows that in any bunch 
the numbers of bald nodes and edges coincide. 
Consequently, there is a one-to-one correspondence between the bridges and the bald nodes, 
all of which have degree~$3$. 
In addition, we can require that a bridge is adjacent to its corresponding bald node. 
Shrinking the pairs of corresponding nodes of degree~$3$ (an operation 
reverse to Step~3), we get a tree whose all nodes are non-bald and of degree~$4$. 
Any such a tree can be obtained at Step~2. By Lemmas~\ref{l:isotop} and~\ref{l:no-z4z4-z2z4}, 
the node labeling of the decomposition tree~$T$ conforms to the labeling at Step~5.
\end{proof}
 
\section{An upper bound}\label{s:up}

In this section, we prove that the maximum order of the autotopy group of an $n$-ary quasigroup 
of order~$4$ equals $6\cdot 2^n$, 
and only the linear quasigroup, 
which is unique up to isotopy, 
reaches the upper bound. 
We also determine the quasigroups that have the maximum order 
of autotopy groups among the nonlinear quasigroups 
and point out this order as well.

Here we use Orbit--Stabilizer Theorem. 
In our case, this theorem says that 
the order of the autotopy group of a code $M$
equals the size of the stabilizer of any element $x\in M$ 
multiplied by size of the orbit of~$x$ under the action 
of the autotopy group. 
Let us start with several auxiliary statements concerning autotopies 
of a $n$-ary quasigroups that stabilize a certain element in the quasigroup code. For simplicity,  that element  is usually considered 
to be the all-zero tuple $(0,\ldots,0)$. 
The next lemma takes place 
for a quasigroup of any order~$k$.

\begin{lemma}
\label{lem:symmetry}
Let~$f$ be an $n$-ary quasigroup and $f(0,\ldots,0)=0$. 
Then an arbitrary autotopy 
$(\theta_0,\ldots,\theta_n)\in\mathrm{Atp}(f)$ stabilizing 
the all-zero tuple is uniquely determined 
by any single of its permutations $\theta_i$, $i\in\{0,\ldots,n\}$. 
In particular, if for some 
$i\in\{0,\ldots,n\}$ the permutation $\theta_i$ is identity, then all the others are also identity.
\end{lemma}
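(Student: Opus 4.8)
The plan is to exploit invertibility of $f$ in each argument by restricting it to the coordinate ``axes'' through the fixed point $(0,\ldots,0)$. For each $i\in\{1,\ldots,n\}$ introduce the unary map $g_i(t)=f(0,\ldots,0,t,0,\ldots,0)$ with $t$ placed in the $i$-th argument. Since $f$ is invertible in its $i$-th argument, $g_i$ is a permutation of $\Sigma$, and from the hypothesis $f(0,\ldots,0)=0$ we get $g_i(0)=0$.

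First I would rewrite the autotopy identity in the form $\theta_0(f(x_1,\ldots,x_n))=f(\theta_1(x_1),\ldots,\theta_n(x_n))$, which is exactly the defining relation (\ref{eq:q-iso}) specialized to $f=g$, and substitute $x_j=0$ for all $j\neq i$. Because the autotopy stabilizes the all-zero tuple, we have $\theta_j(0)=0$ for every $j$; hence all the arguments $\theta_j(0)$ with $j\neq i$ collapse to $0$ and the identity becomes
$$\theta_0(g_i(x_i))=g_i(\theta_i(x_i))\qquad\text{for all } x_i\in\Sigma.$$
In other words $\theta_0=g_i\,\theta_i\,g_i^{-1}$, equivalently $\theta_i=g_i^{-1}\,\theta_0\,g_i$.

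These two conjugation formulas give both directions of the claim at once. Knowing any $\theta_i$ with $i\geqslant1$ determines $\theta_0=g_i\theta_i g_i^{-1}$, and then $\theta_0$ determines every other permutation through $\theta_j=g_j^{-1}\theta_0 g_j$; while knowing $\theta_0$ directly yields all the $\theta_j$ the same way. Thus the entire autotopy is recovered from any single one of its permutations. For the final assertion, if some $\theta_i$ is the identity, then its conjugate $\theta_0$ (or, in the case $i=0$, each conjugate $\theta_j$) is again the identity, and so all remaining permutations are identities as well.

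I do not expect a genuine obstacle here. The only points requiring care are verifying that each $g_i$ is a permutation fixing $0$ (immediate from invertibility in the $i$-th argument together with $f(0,\ldots,0)=0$) and treating the cases $i=0$ and $i\geqslant1$ uniformly via the conjugation relation. The essential content is simply that an autotopy fixing a point acts on each axis through that point as a conjugate of its $0$-th permutation, so all permutations of the autotopy are simultaneous conjugates of $\theta_0$ by the fixed family $\{g_i\}$.
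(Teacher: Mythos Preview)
Your proof is correct and follows essentially the same approach as the paper: both restrict the autotopy identity to a single coordinate axis by substituting zeros in the remaining arguments and then use invertibility in that argument to solve for one permutation in terms of another. Your formulation via the conjugation relation $\theta_i=g_i^{-1}\theta_0 g_i$ is exactly the paper's formula $\theta_1 x_1=f^{\langle 1\rangle}(\theta_0 f(x_1,0,\ldots,0),0,\ldots,0)$ rewritten, since $g_i^{-1}(y)=f^{\langle i\rangle}(0,\ldots,0,y,0,\ldots,0)$.
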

\begin{proof}
Without loss of generality, given the permutation $\theta_0$, we express the permutations 
$\theta_1,\ldots,\theta_n$ in terms of it. 

Assume that $(\theta_0,\ldots,\theta_n)$ is an autotopy of $f$ such that
$\theta_i0=0$, $i=0,1,\ldots,n$. By the 

By the autotopy definition, we get 
$$\theta_0^{-1} f(\theta_1 x_1,0,\ldots,0) = f(x_1,0,\ldots,0)\quad\text{for any $x_1\in\Sigma,$}$$
which is equivalent to
$$\theta_1 x_1 = f^{<1>}(\theta_0 f(x_1,0,\ldots,0),0,\ldots,0)\quad\text{for any $x_1\in\Sigma$}.$$
One can see that the permutation $\theta_1$ is entirely determined by the quasigroup~$f$ and permutation~$\theta_0$. 
In the same manner, we can express any of $\theta_0$, \ldots, $\theta_n$
through any other one.

Finally, by the argumentation above it is evident that, for example, $\theta_0=\mathrm{Id}$ imply 
$\theta_i=\mathrm{Id}$ for any $i\in\{1,\ldots ,n\}$.
\end{proof}

\begin{corollary}
\label{c:order}
Let $f$ be an $n$-ary quasigroup with an autotopy
$\theta=(\theta_0,\ldots,\theta_n)$.
If $\theta$ stabilizes a tuple $(a_0,\ldots,a_n)$ from ${M(f)}$, 
then all of the permutations $\theta_i$, 
$i=0,\ldots,n$, have the same order.
\end{corollary}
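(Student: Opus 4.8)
The plan is to leverage Lemma~\ref{lem:symmetry}, whose crucial consequence is that an autotopy fixing a point of the code and carrying an identity permutation in even one coordinate must be the identity autotopy. I would reduce the claim to repeated applications of this fact to suitable powers of $\theta$, extracting a divisibility relation among the orders of the coordinates and then symmetrizing it.

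First I would record the two elementary facts to be used. Since autotopies form a group under coordinate-wise composition, every power $\theta^m=(\theta_0^m,\ldots,\theta_n^m)$ is again an autotopy of $f$; and because $\theta_i(a_i)=a_i$ for all $i$, each power $\theta^m$ still stabilizes the tuple $(a_0,\ldots,a_n)\in M(f)$. Now write $d_i$ for the order of $\theta_i$ and fix an index $i$. The autotopy $\theta^{d_i}$ carries the identity in its $i$-th coordinate, since $\theta_i^{d_i}=\mathrm{Id}$. By the ``one coordinate identity $\Rightarrow$ all coordinates identity'' part of Lemma~\ref{lem:symmetry}, the whole of $\theta^{d_i}$ is then the identity autotopy, i.e.\ $\theta_j^{d_i}=\mathrm{Id}$ for every $j$. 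Hence $d_j\mid d_i$ for all $j$. As $i$ was arbitrary, interchanging the roles of $i$ and $j$ yields $d_i\mid d_j$ as well, so $d_i=d_j$; thus all the $\theta_i$ share a common order.

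The one point that needs care — and the main, if modest, obstacle — is that Lemma~\ref{lem:symmetry} is stated only for autotopies stabilizing the all-zero tuple of a quasigroup with $f(0,\ldots,0)=0$, whereas here the stabilized tuple $(a_0,\ldots,a_n)$ is arbitrary. To bridge this gap I would pass to an isotope. Choosing permutations $\sigma_0,\ldots,\sigma_n$ with $\sigma_i(a_i)=0$ and setting $\sigma=(\sigma_0,\ldots,\sigma_n)$, let $g$ be the quasigroup with code $M(g)=\sigma(M(f))$, so that $g(0,\ldots,0)=0$. The coordinate-wise conjugate $\sigma\theta\sigma^{-1}=(\sigma_0\theta_0\sigma_0^{-1},\ldots,\sigma_n\theta_n\sigma_n^{-1})$ is an autotopy of $g$ stabilizing $\sigma(a_0,\ldots,a_n)=(0,\ldots,0)$, to which Lemma~\ref{lem:symmetry} applies directly. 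Since conjugation preserves both the identity permutation and the order of each coordinate, the conclusion transfers back verbatim to $\theta$, and the power argument above goes through unchanged. I expect this reduction to be the only nonroutine step; the divisibility bookkeeping is then immediate.
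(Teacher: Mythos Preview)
Your proof is correct and follows essentially the same approach as the paper: conjugate by an isotopy sending $(a_0,\ldots,a_n)$ to the all-zero tuple so that Lemma~\ref{lem:symmetry} applies, then use that lemma on powers of the (conjugated) autotopy to force all coordinate permutations to have the same order. The only cosmetic differences are that the paper chooses the specific transpositions $\tau_i=(0\,a_i)$ for the conjugating isotopy and phrases the power argument as ``$\delta^r$ is identity in one coordinate $\Rightarrow$ identity in all'', whereas you spell out the resulting mutual divisibility $d_j\mid d_i$ explicitly.
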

\begin{proof}
By the hypothesis, $(a_0,\ldots,a_n)\in{M(f)}$; 
that is $a_0=f(a_1,\ldots,a_n)$. 
Consider the quasigroup $g$ defined as 
$$g(x_1,\ldots,x_n)=\tau_0f(\tau_1x_1,\ldots,\tau_1x_n),$$ 
where 
$\tau=(\tau_0,\ldots,\tau_n)$ is an isotopy consisting of the transpositions $\tau_i=(0\,a_i)$, 
$i=0,\ldots,n$. 
It is easy to verify that $g(0,\ldots,0)=0$ 
and the isotopy 
$$\delta=\tau\theta\tau=(\tau_0\theta_0\tau_0,\ldots,\tau_n\theta_n\tau_n),$$ conjugate to $\theta$, 
is an autotopy of $g$ stabilizing the all-zero tuple.

By Lemma~\ref{lem:symmetry}, for any integer $r$, 
all permutations from the autotopy $\delta^r$ 
are identity if there is at least one identity permutation among them. Consequently, all 
permutations~$\delta_i$, $i=0,\ldots,n$, have the same order.

It remains to note that the permutations $\delta_i$, $\theta_i$ are
of the same order
because $\delta_i^r=\tau\theta_i^r\tau$, 
$i=0,\ldots,n$.
\end{proof}

\begin{lemma}
\label{l:2-to-semi}
Let $f$ be an $n$-ary quasigroup of order~$4$ such that $f(0,\ldots,0)=0$. 
If $f$ has an autotopy~$\theta$ of order~$2$ 
that stabilizes the all-zero tuple, then~$f$ is semilinear.
\end{lemma}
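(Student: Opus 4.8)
The plan is to determine the possible shapes of the permutations $\theta_0,\ldots,\theta_n$ constituting the order-$2$ autotopy $\theta$, and then to read off a semilinear structure directly from their fixed points. First I would pin down the orders of the individual permutations. Since $\theta$ has order $2$, we have $\theta_i^2=\mathrm{Id}$ for every $i$, so each $\theta_i$ has order $1$ or $2$. By Lemma~\ref{lem:symmetry} (applicable because $\theta$ stabilizes the all-zero tuple), if even one $\theta_i$ were the identity, then all of them would be, forcing $\theta=\mathrm{Id}$ and contradicting that $\theta$ has order $2$. Hence every $\theta_i$ is a genuine involution. Moreover, stabilizing the all-zero tuple means $\theta_i(0)=0$ for each $i$, so each $\theta_i$ restricts to an involution of $\{1,2,3\}$. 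On a three-element set the only non-identity involution is a transposition; therefore each $\theta_i$ is a transposition of two elements of $\{1,2,3\}$, with fixed set exactly $\mathrm{Fix}(\theta_i)=\{0,c_i\}$ for some $c_i\in\{1,2,3\}$.

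The heart of the argument is to take the pairs $\{0,c_j\}$, $j=0,1,\ldots,n$, as the candidate semilinear structure. Let $x=(x_1,\ldots,x_n)$ range over $\{0,c_1\}\times\cdots\times\{0,c_n\}$. Each coordinate then lies in $\mathrm{Fix}(\theta_i)$, so $\theta_i(x_i)=x_i$, and the autotopy relation $f(\theta_1 x_1,\ldots,\theta_n x_n)=\theta_0 f(x)$ collapses to $f(x)=\theta_0 f(x)$. Thus $f(x)\in\mathrm{Fix}(\theta_0)=\{0,c_0\}$, which gives the inclusion $f(\{0,c_1\}\times\cdots\times\{0,c_n\})\subseteq\{0,c_0\}$. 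To see that the image is exactly the two-element set $\{0,c_0\}$, I would invoke the invertibility of $f$ in its first argument: since $f(0,\ldots,0)=0$ and $c_1\neq 0$, we have $f(c_1,0,\ldots,0)\neq 0$, yet this value also lies in $\{0,c_0\}$, so it equals $c_0$ (and $c_0\neq 0$ since $c_0\in\{1,2,3\}$). Hence $f(\{0,c_1\}\times\cdots\times\{0,c_n\})=\{0,c_0\}$, which is precisely the defining identity~(\ref{eq:semilinear}) of a semilinear quasigroup, with $\{a_j,b_j\}=\{0,c_j\}$ for all $j$.

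I do not expect a serious obstacle: once the $\theta_i$ are identified as transpositions inside $\{1,2,3\}$, the semilinear structure is forced by their fixed pairs. The only points requiring care are the bookkeeping that each $\theta_i$ is genuinely an involution (handled by Lemma~\ref{lem:symmetry}; alternatively Corollary~\ref{c:order} gives at once that all $\theta_i$ share the common order $2$) and the verification that the image is a true two-element set rather than a singleton (handled by invertibility of $f$, which also rules out $c_0=0$). Note that the pairs $\{0,c_j\}$ may differ from argument to argument, but this is permitted by the definition of semilinearity, so no uniformity across coordinates is needed.
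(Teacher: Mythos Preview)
Your proof is correct and follows essentially the same approach as the paper: identify each $\theta_i$ as a transposition of $\{1,2,3\}$ fixing $0$, and use the fixed-point pairs $\{0,c_i\}$ to exhibit the semilinear structure via the autotopy identity. The only cosmetic differences are that the paper first applies a WLOG isotopy to make all $\theta_i=(23)$ (so that $c_i=1$ uniformly), whereas you work directly with the possibly varying $c_i$, and you are a bit more explicit than the paper in checking that the image hits both elements of $\{0,c_0\}$.
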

\begin{proof}
By Corollary~\ref{c:order}, each of the permutations $\theta_i$, $i=0,\ldots,n$, has order $2$. 
Since $\theta_i(0)=0$ for each $i=0,\ldots,n$, we have $\theta_i\in \{(12),(13),(23)\}$. 
Without loss of generality,
assume that $\theta_0=\ldots=\theta_n=(23)$
(otherwise, consider 
a quasigroup isotopic to~$f$ 
that has the autotopy consisting of permutations $(23)$).

For every $(x_1,\ldots,x_n)$ from $\{0,1\}^n$ and for $x_0=f(x_1,\ldots,x_n)$, we have
$$
	x_0=f(x_1,\ldots,x_n) = \theta_0^{-1}f(\theta_1x_1,\ldots,\theta_nx_n) 
	= \theta_0f(x_1,\ldots,x_n) = \theta_0x_0.
$$
Since $\theta_0=(23)$, the value of $x_0$ can only be $0$ or $1$.
Therefore, the quasigroup~$f$ maps 
$\{0,1\}^n$ to $\{0,1\}$. 
So, $f$ is semilinear by the definition.
\end{proof}

\begin{lemma}
\label{l:3-to-lin}
Let $f$ be an $n$-ary quasigroup of order~$4$ 
such that $f(0,\ldots,0)=0$. 
If $f$ has 
an autotopy~$\theta$ of order~$3$ 
that stabilizes some tuple 
$(a_0,\ldots,a_n)\in M(f)$, 
then~$f$ is linear.
\end{lemma}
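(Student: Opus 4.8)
The plan is to put the autotopy into a normal form and then split according to the reducibility dichotomy of Theorem~\ref{th:top}, handling the semilinear case directly and the reducible case by induction on the arity~$n$.

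First I would normalize. Since $\theta$ has order~$3$ and stabilizes a tuple of $M(f)$, Corollary~\ref{c:order} shows every $\theta_i$ has order~$3$, hence is a $3$-cycle. Conjugating $f$ and $\theta$ by the isotopy $((0\,a_0),\ldots,(0\,a_n))$ exactly as in the proof of Corollary~\ref{c:order}, I may assume that the stabilized tuple is the all-zero one; this keeps $f(0,\ldots,0)=0$, makes each $\theta_i$ a $3$-cycle fixing~$0$ (so $\theta_i$ cyclically permutes $\{1,2,3\}$), and preserves linearity by Lemma~\ref{l:isemilinear}. The autotopy identity now reads $\theta_0 f(x_1,\ldots,x_n)=f(\theta_1 x_1,\ldots,\theta_n x_n)$. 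This is the conceptual point where order~$3$ differs from the order-$2$ situation of Lemma~\ref{l:2-to-semi}: a $3$-cycle has a single fixed point, so one cannot read off an invariant pair directly; instead the symmetry must be used to rotate a pair through all of $\{0,1\}$, $\{0,2\}$, $\{0,3\}$.

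By Theorem~\ref{th:top}, $f$ is semilinear or reducible. If $f$ is semilinear, then in the first argument it is $\{0,a\}$-semilinear for some $a\neq 0$. Applying Lemma~\ref{l:isemilinear} to the autotopy $\theta$ (so that $\theta(f)=f$) shows $f$ is also $\{0,\theta_1^{-1}(a)\}$-semilinear in the first argument, and iterating with the powers of $\theta$ rotates $a$ through the whole orbit $\{1,2,3\}$ of the $3$-cycle~$\theta_1^{-1}$. Hence $f$ is simultaneously $\{0,1\}$- and $\{0,2\}$-semilinear in the first argument, so $f$ is linear by Lemma~\ref{l:linear}(ii). This also settles the base of the induction, since a binary quasigroup is never reducible and is therefore semilinear.

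If $f$ is reducible, then after reordering the arguments (which only relabels $\mathrm{Atp}(f)$ and preserves linearity) I may write $f(x)=h(g(x_1,\ldots,x_m),x_{m+1},\ldots,x_n)$ with $2\le m<n$. By Lemma~\ref{lem:composition} the autotopy factors as $\theta=\pi\dot{\otimes}\tau$ with $\pi=(\rho,\theta_1,\ldots,\theta_m)\in\mathrm{Atp}(g)$ and $\tau=(\theta_0,\rho,\theta_{m+1},\ldots,\theta_n)\in\mathrm{Atp}(h)$, where $\rho=\pi_0=\tau_1$ is the glue. Writing $t_0=g(0,\ldots,0)$, so that $h(t_0,0,\ldots,0)=f(0,\ldots,0)=0$, the permutation $\pi$ sends $(t_0,0,\ldots,0)\in M(g)$ to $(\rho(t_0),0,\ldots,0)$, which forces $\rho(t_0)=t_0$; thus $\pi$ stabilizes $(t_0,0,\ldots,0)\in M(g)$ and $\tau$ stabilizes $(0,t_0,0,\ldots,0)\in M(h)$. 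By Corollary~\ref{c:order}, $\rho$ has order~$3$, so $\pi$ and $\tau$ are order-$3$ autotopies stabilizing points of the codes of $g$ and $h$, both of arity strictly between $2$ and $n$. By the induction hypothesis $g$ and $h$ are linear, and a composition of linear quasigroups is again linear, since these are precisely the affine maps over $\mathbb{Z}_2\times\mathbb{Z}_2$ (every permutation of $\Sigma$ is affine) and affine maps compose to affine maps; therefore $f$ is linear.

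The main obstacle I expect is twofold in the reducible case: verifying that the glue permutation $\rho$ fixes the inner value $t_0=g(0,\ldots,0)$, so that Corollary~\ref{c:order} applies to $\pi$ and $\tau$ and produces order~$3$ again; and justifying cleanly that a composition of linear quasigroups stays linear, for which the affine-over-$\mathbb{Z}_2\times\mathbb{Z}_2$ description is the most economical route. The semilinear case is comparatively easy once one notices that the order-$3$ symmetry lets a single semilinear pair be rotated into all three, after which Lemma~\ref{l:linear}(ii) finishes.
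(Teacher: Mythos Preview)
Your argument is correct; the semilinear case matches the paper's (rotate one $\{0,a\}$ pair through $\{0,1\},\{0,2\},\{0,3\}$ via Lemma~\ref{l:isemilinear}, then apply Lemma~\ref{l:linear}(ii)), though you spell it out more explicitly than the paper does. The non-semilinear case is where you genuinely diverge: the paper does not induct on arity but instead passes to a \emph{proper} decomposition $S$ of $f$ (Lemma~\ref{l:prav}), lifts $\theta$ to an order-$3$ autotopy $\delta$ of $S$ by iterating Lemma~\ref{lem:composition}, and notes that every component of $\delta$ is a $3$-cycle or the identity and hence has a fixed point; at any non-bald node the induced autotopy therefore stabilizes a code tuple, Corollary~\ref{c:order} forces all its components to have order $3$, and the semilinear argument makes that node's label linear---contradicting properness, since a proper decomposition of a non-semilinear quasigroup carries no linear labels. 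Your single split $f=h(g,\ldots)$ plus induction is more elementary (no proper decompositions, no decomposition autotopies), but you pay for it twice: you must check that compositions of linear quasigroups are linear (your affine-over-$\mathbb{Z}_2\times\mathbb{Z}_2$ remark is the clean way), and for the inductive call you need the lemma without the hypothesis $f(0,\ldots,0)=0$, since $g(0,\ldots,0)=t_0$ need not vanish---your own normalization step already shows this hypothesis is inessential, so simply phrase the induction on the unconditioned statement. The paper trades that elementarity for a one-shot contradiction with no recursion.
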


\begin{proof}
By Corollary~\ref{c:order},
each of the permutations 
$\theta_i$, $i=0,\ldots,n$, 
has order $3$. 

(i)
If $f$ is $\{0,1\}$-, $\{0,2\}$-, or $\{0,3\}$-semilinear 
in every variable, 
then it is $\{a,b\}$-semilinear 
for any $a\neq b\in\Sigma$ and, consequently, linear.
Hence, the lemma is true for semilinear quasigroups.

(ii)
Assume $f$ is not semilinear. 
Consider a proper decomposition~$S$ 
of~$f$ and the corresponding tree~$T$. 
The autotopy~$\delta$ of~$S$ induced by~$\theta$ 
has the same order~$3$. 
Therefore, $\delta$ consists of $3$-cycles or identity permutations. Each of those permutations stabilizes some 
element in $\Sigma$.

Consider an arbitrary non-bald node~$v$
labeled by a quasigroup~$g$.
The autotopy of~$g$ 
induced by~$\delta$ satisfies the hypothesis of 
Corollary~\ref{c:order};
so, each of its permutations has order $3$. 
Since~$S$ is a proper 
decomposition, 
$g$ is semilinear,
and from item (i) of this proof 
we conclude that it is linear. 
This contradicts the definition of a proper decomposition.
\end{proof}

\begin{theorem}
(i)
The maximum order for an autotopy group of an $n$-ary quasigroup of order~$4$ equals $6\cdot4^n$;
 only the linear quasigroups reach this maximum.
(ii)
The maximum order for an autotopy group 
of a nonlinear $n$-ary quasigroup of order~$4$ 
equals $2\cdot4^n$;
 only the semilinear quasigroups 
 whose autotopy group acts transitively 
 on their codes reach this maximum. 
\end{theorem}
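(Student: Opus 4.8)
The plan is to apply the Orbit--Stabilizer Theorem to the action of $\mathrm{Atp}(f)$ on the code $M(f)$, whose cardinality is $4^n$. Thus for any $x\in M(f)$ we have $|\mathrm{Atp}(f)|=|\mathrm{St}(x)|\cdot|\mathrm{Orb}(x)|$ with $|\mathrm{Orb}(x)|\leqslant 4^n$. To bound the stabilizer I would conjugate as in Corollary~\ref{c:order}, replacing $f$ by an isotopic $g$ for which $x$ becomes the all-zero tuple and $g(0,\ldots,0)=0$. By Lemma~\ref{lem:symmetry} an autotopy fixing the all-zero tuple is completely determined by its single permutation $\theta_0$, which fixes $0$ and therefore lies in the copy of $S_3$ permuting $\{1,2,3\}$. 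Hence $|\mathrm{St}(x)|\leqslant 6$ and $|\mathrm{Atp}(f)|\leqslant 6\cdot 4^n$ for every quasigroup of order $4$, which is the upper bound in~(i).

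For the sharpness and the characterization in~(i) I would compute $|\mathrm{Atp}(l_n)|$ directly. Since every permutation of $\Sigma=\mathrm{GF}(2)^2$ is affine (as noted in the proof of Lemma~\ref{l:aut4}), write $\theta_i(x)=A_ix+c_i$. The autotopy identity $\theta_1 x_1\oplus\cdots\oplus\theta_n x_n=\theta_0(x_1\oplus\cdots\oplus x_n)$, compared coefficientwise, forces $A_0=A_1=\cdots=A_n=:A\in\mathrm{GL}(2,2)$ and $c_0=c_1\oplus\cdots\oplus c_n$, while $A$ and $c_1,\ldots,c_n$ are free. This gives exactly $6\cdot 4^n$ autotopies, and by Lemma~\ref{l:linear}(i) every linear quasigroup is isotopic to $l_n$, hence has an isomorphic autotopy group and attains the bound. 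Conversely, if $|\mathrm{Atp}(f)|=6\cdot 4^n$, then equality in Orbit--Stabilizer forces $|\mathrm{St}(x)|=6$, so $\mathrm{St}(x)\cong S_3$ contains an element of order $3$ fixing a tuple of $M(f)$; by Lemma~\ref{l:3-to-lin}, $f$ is linear.

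For~(ii), if $f$ is nonlinear then Lemma~\ref{l:3-to-lin} rules out any order-$3$ autotopy stabilizing a point of $M(f)$, so $\mathrm{St}(x)$ is a subgroup of $S_3$ with no element of order $3$ and hence has order at most $2$; thus $|\mathrm{Atp}(f)|\leqslant 2\cdot 4^n$. If equality holds, then for every $x$ we must have $|\mathrm{Orb}(x)|=4^n$ (so the action is transitive) and $|\mathrm{St}(x)|=2$; the order-$2$ autotopy fixing $x$, after the conjugation of Corollary~\ref{c:order}, yields by Lemma~\ref{l:2-to-semi} that $f$ is semilinear. This establishes the necessity of both conditions in~(ii).

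It remains to prove the converse of~(ii): a nonlinear semilinear quasigroup with transitive autotopy group attains $2\cdot 4^n$. Transitivity gives $|\mathrm{Orb}(x)|=4^n$, so it suffices to exhibit one code point with nontrivial stabilizer. Here I would use Lemma~\ref{l:isemilinear} together with the isotopy invariance of the autotopy group: choosing each $\theta_j$ so as to send the $j$-th semilinear pair of $f$ to $\{0,a\}$, I may assume $f$ is uniformly $\{0,a\}$-semilinear, whence $f(\{0,a\}^n)=\{0,a\}$. Then Lemma~\ref{l:2rodn}(ii) provides a nontrivial autotopy built from native transpositions $(bc)$ with $\{b,c\}=\Sigma\setminus\{0,a\}$; these fix $0$ and $a$, so they stabilize the code point $(f(0,\ldots,0),0,\ldots,0)$. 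Hence the stabilizer has order at least $2$, forcing $|\mathrm{St}|=2$ and $|\mathrm{Atp}(f)|=2\cdot 4^n$. The main obstacle I anticipate is the direct affine computation for $l_n$ and, in the converse of~(ii), this reduction to the uniform semilinear case, since a nonlinear semilinear quasigroup need not be $\{0,a\}$-semilinear for a single $a$ before an isotopy is applied.
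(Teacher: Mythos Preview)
Your proof is correct and follows essentially the same Orbit--Stabilizer strategy as the paper, invoking the same lemmas (\ref{lem:symmetry}, \ref{l:2-to-semi}, \ref{l:3-to-lin}, \ref{l:2rodn}) at the same junctures. Two minor presentational differences are worth noting: for $|\mathrm{Atp}(l_n)|$ the paper simply exhibits translations (giving orbit $4^n$) and the six diagonal autotopies $(\theta_*,\ldots,\theta_*)$ with $\theta_*(0)=0$ (giving stabilizer $6$), whereas you parametrize all autotopies affinely over $\mathrm{GF}(2)^2$; and for the converse of~(ii) the paper asserts the stabilizer is at least $2$ citing Lemma~\ref{l:aut4}, while your explicit reduction to a uniformly $\{0,a\}$-semilinear isotope followed by Lemma~\ref{l:2rodn}(ii) is the cleaner justification (and is in fact what the paper's argument needs, since Lemma~\ref{l:aut4} concerns only $l_n^\bullet$).
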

\begin{proof}
Consider an arbitrary $n$-ary quasigroup~$f$ of order~$4$. Without loss of generality, 
we assume that~$f(0,\ldots,0)=0$. 

By Orbit--Stabilizer Theorem, the order of~$\mathrm{Atp}(M(f))$ 
equals the size of its stabilizer subgroup 
with respect to $(0,\ldots,0)\in{M(f)}$ multiplied by 
the size of the orbit of $(0,\ldots,0)$ 
under the action of~$\mathrm{Atp}(M(f))$.
 
For the all-zero tuple, 
the size of its orbit 
does not exceed the cardinality of~$M(f)$, i.e., $4^n$
(the equality takes place if and only if 
the orbit coincides 
with the code;
in other words if 
the action of the autotopy group is transitive on the code.)
 
Next, consider the size of the stabilizer with respect to the all-zero tuple. 
For a non-semilinear quasigroup, it equals~$1$ by Lemmas~\ref{l:2-to-semi} and~\ref{l:3-to-lin}. 
As for a semilinear quasigroup that is not linear, the size of the stabilizer is~$2$ (at least~$2$ by Lemma~\ref{l:aut4};
at most~$2$ by Lemmas~\ref{lem:symmetry} and~\ref{l:3-to-lin}). 
So, (ii) is proved.

Since any linear quasigroup is isotopic to the quasigroup 
$l_n(x_1,\ldots,x_n) = x_1\oplus\ldots\oplus x_n$, 
it remains to find $|\mathrm{Atp}(l_n)|$. 
For an arbitrary tuple~$(a_0,\ldots,a_n)\in{M(l_n)}$, 
the mapping $(x_0,\ldots,x_n)\mapsto(x_0\oplus a_0,\ldots,x_n\oplus a_n)$
maps $(0,\ldots,0)$  to $(a_0,\ldots,a_n)$ and induces an autotopy of~$l_n$. 
Hence, the size of the orbit of~$(0,\ldots,0)$ equals~$4^n$.

The size of the stabilizer with respect to $(0,\ldots,0)$ 
is at most~$3!$ 
by Lemma~\ref{lem:symmetry}. 
On the other hand, 
for each of $3!$ permutations $\theta_*$ of $\Sigma$ 
such that $\theta_*(0)=0$,
we have ab autotopy~$\theta=(\theta_*,\ldots,\theta_*)$ 
(this can be checked by induction on the arity~$n$). 
Therefore, the size 
of the stabilizer equals~$6$, 
and the order of the autotopy group 
of any linear $n$-ary quasigroup 
of order~$4$ is~$6\cdot4^n$.
\end{proof}

In conclusion, we should note that the semilinear $n$-ary quasigroups 
with transitive autotopy groups were characterized in~\cite{KP:2016:transitive}, where a correspondence 
between such quasigroups and Boolean polynomials of degree at most $2$ 
was established.


\begin{thebibliography}{10}

\bibitem{Belousov}
V.~D. Belousov.
\newblock {\em $n$-Ary Quasigroups}.
\newblock Shtiintsa, Kishinev, 1972.
\newblock In Russian.

\bibitem{Frucht:38}
R.~Frucht.
\newblock Herstellung von graphen mit vorgegebener abstrakter gruppe.
\newblock {\em Compos. Math.}, 6:239--250, 1938.
\newblock Zbl 0020.07804.

\bibitem{KroPot:4}
D.~S. Krotov and V.~N. Potapov.
\newblock $n$-{A}ry quasigroups of order $4$.
\newblock {\em \href{http://epubs.siam.org/journal/sjdmec}{SIAM J. Discrete
  Math.}}, 23(2):561--570, 2009.
\newblock \DOI{10.1137/070697331}.

\bibitem{KP:2016:transitive}
D.~S. Krotov and V.~N. Potapov.
\newblock Constructions of transitive latin hypercubes.
\newblock {\em
  \href{http://www.sciencedirect.com/science/journal/01956698}{Eur. J. Comb.}},
  54:51--64, May 2016.
\newblock \DOI{10.1016/j.ejc.2015.12.001} MR3459052 Zbl~1331.05045.

\bibitem{Mal:2000:aut.en}
S.~A. Malyugin.
\newblock On the order of the automorphism group of perfect binary codes.
\newblock {\em
  \href{http://www.mathnet.ru/php/journal.phtml?jrnid=da\&option_lang=eng}{Diskretn.
  Anal. Issled. Oper.}, Ser.~1}, 7(4):91--100, 2000.
\newblock In Russian.

\bibitem{Phelps:86}
K.~T. Phelps.
\newblock Every finite group is the automorphism group of some perfect code.
\newblock {\em \href{http://www.sciencedirect.com/science/journal/00973165}{J.
  Comb. Theory, Ser.~A}}, 43:45--51, 1986.
\newblock \DOI{10.1016/0097-3165(86)90021-X} Zbl~0605.94008.

\bibitem{PotKro:asymp}
V.~N. Potapov and D.~S. Krotov.
\newblock Asymptotics for the number of $n$-quasigroups of order $4$.
\newblock {\em \href{http://link.springer.com/journal/11202}{Sib. Math. J.}},
  47(4):720--731, 2006.
\newblock \DOI{10.1007/s11202-006-0083-9} translated from
  \href{http://www.mathnet.ru/php/journal.phtml?jrnid=smj\&option_lang=eng}{Sib.
  Mat. Zh.} 47(4) (2006), 873-887.

\bibitem{SolTop2000pit}
F.~I. Solov'eva and S.~T. Topalova.
\newblock On automorphism groups of perfect binary codes and {S}teiner triple
  systems.
\newblock {\em \href{http://link.springer.com/journal/11122}{Probl. Inf.
  Transm.}}, 36(4):331--335, 2000.
\newblock Translated from
  \href{http://www.mathnet.ru/php/journal.phtml?jrnid=ppi\&option_lang=eng}{Probl.
  Peredachi Inf.} 36(4) (2000), 53-58.

\bibitem{SolTop2000en}
F.~I. Solov'eva and S.~T. Topalova.
\newblock Perfect binary codes and {S}teiner triple systems with maximum orders
  of automorphism groups.
\newblock {\em
  \href{http://www.mathnet.ru/php/journal.phtml?jrnid=da\&option_lang=eng}{Diskretn.
  Anal. Issled. Oper.}, Ser.~1}, 7(4):101--110, 2000.
\newblock In Russian.

\bibitem{ZZ:2002}
V.~A. Zinoviev and D.~V. Zinoviev.
\newblock Binary extended perfect codes of length 16 by the generalized
  concatenated construction.
\newblock {\em \href{http://link.springer.com/journal/11122}{Probl. Inf.
  Transm.}}, 38(4):296--322, 2002.
\newblock \DOI{10.1023/A:1022049912988} translated from
  \href{http://www.mathnet.ru/php/journal.phtml?jrnid=ppi\&option_lang=eng}{Probl.
  Peredachi Inf.} 38(4) (2002), 56-84.

\end{thebibliography}
%

\providecommand\href[2]{#2} \providecommand\url[1]{\href{#1}{#1}}
  \def\DOI#1{{\small {DOI}:
  \href{http://dx.doi.org/#1}{#1}}}\def\DOIURL#1#2{{\small{DOI}:
  \href{http://dx.doi.org/#2}{#1}}}

\end{document}